\newtheorem{theorem}{Theorem}[section]
\newtheorem{lemma}[theorem]{Lemma}
\newtheorem{corollary}[theorem]{Corollary}
\newtheorem{proposition}[theorem]{Proposition}
\newtheorem{conjecture}[theorem]{Conjecture}
\theoremstyle{definition}
\newtheorem{definition}[theorem]{Definition}
\newtheorem{example}[theorem]{Example}
\newtheorem{question}[theorem]{Question}
\theoremstyle{remark}
\newtheorem{remark}[theorem]{Remark}
\numberwithin{equation}{section}
\begin{document}

	\title[Analytic automorphism group and similar representation]{Analytic automorphism group and similar representation of analytic functions}

	\author{Bingzhe Hou}
	\address{Bingzhe Hou, School of Mathematics, Jilin University, 130012, Changchun, P. R. China}
	\email{houbz@jlu.edu.cn}
	
	\author{Chunlan Jiang}
	\address{Chunlan Jiang, Department of Mathematics, Hebei Normal University, 050016, Shijiazhuang, P. R. China}
	\email{cljiang@hebtu.edu.cn}
	
	\date{}
	\subjclass[2010]{Primary 46J40, 46J25, 46E20; Secondary 47B35, 47B33, 30J10.}
	\keywords{Analytic automorphism group, weighted Hardy spaces, polynomial growth, representation, similarity.}
	\thanks{}
	\begin{abstract}
		In geometry group theory, one of the milestones is M. Gromov's polynomial growth theorem: Finitely generated groups have polynomial growth if and only if they are virtually nilpotent. Inspired by M. Gromov's work, we introduce the growth types of weighted Hardy spaces. In this paper, we focus on the weighted Hardy spaces of polynomial growth, which cover the classical Hardy space, weighted Bergman spaces, weighted Dirichlet spaces and much broader. Our main results are as follows. $(1)$ We obtain the boundedness of the composition operators with symbols of analytic automorphisms of unit open disk acting on weighted Hardy spaces of polynomial growth, which implies the multiplication operator $M_z$ is similar to $M_{\varphi}$ for any analytic automorphism $\varphi$ on the unit open disk. Moreover,  we obtain the boundedness of composition operators induced by analytic functions on the unit closed disk on weighted Hardy spaces of polynomial growth. $(2)$ For any Blaschke product $B$ of order $m$, $M_B$ is similar to $\bigoplus_{1}^m M_z$, which is an affirmative answer to a generalized version of a question proposed by R. Douglas in 2007.  $(3)$ We also give counterexamples to show that the composition operators with symbols of analytic automorphisms of unit open disk acting on a weighted Hardy space of intermediate growth could be unbounded, which indicates the necessity of the setting of polynomial growth condition.  Then, the collection of weighted Hardy spaces of polynomial growth is almost the largest class such that Douglas's question has an affirmative answer. $(4)$ Finally,  we give the Jordan representation theorem and similarity classification for the analytic functions on the unit closed disk as multiplication operators on a weighted Hardy space of polynomial growth. 
	\end{abstract}
	\maketitle
	\tableofcontents

\section{Introduction}

Denote by $\textrm{Hol}(\mathbb{D})$ the space of all analytic functions on the unit open disk $\mathbb{D}$, and denote by $\textrm{Aut}(\mathbb{D})$ the analytic automorphism group on $\mathbb{D}$, which is the set of all analytic bijections from $\mathbb{D}$ to itself. As well known, each $\varphi\in\textrm{Aut}(\mathbb{D})$ could be written as the following form
\[
\varphi(z)={\textrm e}^{\mathbf{i}\theta}\cdot \frac{z_0-z}{1-\overline{z_0}z}, \ \ \ \text{for some} \ z_0\in\mathbb{D}.
\]
We are interested in a subclass of analytic functions on the unit open disk $\mathbb{D}$, named weighted Hardy space.

In this paper, we introduce the weighted Hardy space from a given weight sequence.
For any $f\in \textrm{Hol}(\mathbb{D})$, denote the Taylor expansion of $f(z)$ by
\[
f(z)=\sum\limits_{k=0}^{\infty}\widehat{f}(k)z^k.
\]
Let $w=\{w_k\}_{k=1}^{\infty}$ be a sequence of positive numbers. Write $\beta=\{\beta_k\}_{k=0}^{\infty}$,
\[
\beta_0=1, \ \ \text{and} \ \ \beta_k=\prod\limits_{j=1}^{k}w_{j}, \ \ \text{for} \ k\geq1.
\]
The weighted Hardy space $H^2_{\beta}$ induced by the weight sequence $w$ (or $\beta$) is defined by
\[
H^2_{\beta}=\{f(z)=\sum\limits_{k=0}^{\infty}\widehat{f}(k)z^k; \ \sum\limits_{k=0}^{\infty}|\widehat{f}(k)|^{2}{\beta}_k^2<\infty\}.
\]
Moreover, the weighted Hardy space $H^2_{\beta}$ is a complex separable Hilbert space, on which the inner product is defined by,  for any $f,g \in H^2_{\beta}$
\[
\langle  f, g\rangle_{H^2_{\beta}}=\sum\limits_{k=0}^{\infty}{\beta}_k^2\overline{\widehat{g}(k)}\widehat{f}(k)
\]
Then, any $f\in H^2_{\beta}$ has the following norm
\[
\|f(z)\|_{H^2_{\beta}}=\sqrt{\langle  f, f\rangle_{H^2_{\beta}}}=\sqrt{\sum\limits_{k=0}^{\infty}{\beta}_k^2|\widehat{f}(k)|^{2}}.
\]
In particular, $\|z^n\|_{H^2_{\beta}}={\beta}_n$ for each $n\in \mathbb{\mathbb{N}}$. Let $H^2_{\beta}$ and $H^2_{\beta'}$ be two weighted Hardy spaces. If there are positive constants $K_1$ and $K_2$ such that $K_1\leq \frac{\beta'_k}{\beta_k}\leq K_2$ for all $k\in\mathbb{N}$, then $H^2_{\beta}=H^2_{\beta'}$ and the norms are equivalent, and hence we say that $H^2_{\beta}$ and $H^2_{\beta'}$ are equivalent weighted Hardy spaces.

Furthermore, any $f\in H^2_{\beta}$ induces a multiplication operator $M_f$, defined by
\[
M_f(g)=f\cdot g, \ \  \ \ \text{for \ any }\ g\in H^2_{\beta}.
\]
Define $H^{\infty}_{\beta}$ the set
\[
H^{\infty}_{\beta}=\{f(z)\in H^2_{\beta}; \ M_f \ \text{is  a  bounded  operator  from} \ H^2_{\beta} \ \text{to} \ H^2_{\beta}\}.
\]
In the present article, we always assume the weight sequence $w$ satisfying
\[
\lim\limits_{k\rightarrow\infty}w_k=1.
\]
In this case, we have
\[
\textrm{Hol}(\overline{\mathbb{D}})\subseteq H^{\infty}_{\beta}\subseteq H^2_{\beta} \subseteq \textrm{Hol}(\mathbb{D}),
\]
where $\textrm{Hol}(\overline{\mathbb{D}})$ is denoted by the space of all analytic functions on the unit closed disk $\overline{\mathbb{D}}$. It suffices to check the first inclusion relationship above. Let $H^{2}_{\beta}$ be a weighted Hardy space induced by the weight sequence $w=\{w_k\}_{k=1}^{\infty}$. Following from the work of Shields \cite{Shi}, to study the multiplication operator $M_z$ on $H^{2}_{\beta}$ is equivalent to study the forward unilateral weighted shift $S_{w}$ on the classical Hardy space, where $S_{w}:H^2\rightarrow H^2$ is defined by
\[
S_{w}(z^k)=w_{k+1}z^{k+1}, \ \ \ \text{for} \ k=0,1,2,\ldots.
\]
Notice that $w_k\rightarrow 1$ implies the spectrum of $S_{w}$, denoted by $\sigma(S_w)$, is the unit closed disk $\overline{\mathbb{D}}$. Then, for any $f\in \textrm{Hol}(\overline{\mathbb{D}})$, one can see that the analytic function calculus $f(S_w)$ on $H^2$ is corresponding to the multiplication operator $M_f$ on $H^{2}_{\beta}$. Consequently, we have $\sigma(M_f)=f(\overline{\mathbb{D}})$. Therefore, $M_f$ is a bounded operator on $H^{2}_{\beta}$ and moreover, $M_f$ is lower bounded on $H^{2}_{\beta}$ if and only if $f$ has no zero point in $\partial\mathbb{D}$.

In addition, if $w_k\rightarrow 1$, the weighted Hardy space $H^2_{\beta}$ is isometric to the following weighted square summable sequence space
\[
\{(c_0, c_1, c_2, \ldots); \ \sum\limits_{k=0}^{\infty}|c_k|^{2}{\beta}_k^2<\infty\}.
\]
The classical Hardy space, the weighted Bergman spaces, and the weighted Dirichlet spaces are all the weighted Hardy spaces of this type.

Then, every analytic function $f$ in $\textrm{Hol}(\overline{\mathbb{D}})$ has an operator representation $M_f$ on $H^2_{\beta}$. As what we do in the representation theory or in linear algebra essentially, a natural task is to study the similarity of the representations. More precisely, in finite-dimensional matrix theory, the Jordan decomposition theorem tells us each finite-dimensional matrix is similar to a direct sum of some Jordan blocks.
With regard to infinite-dimensional operators (or matrices), strongly irreducible operator is a suitable substitute for Jordan block (see \cite{JW}). Consequently, strongly irreducible decomposition is seemed as the Jordan decomposition of an infinite-dimensional operator. Then we could study the "Jordan decomposition" and similarity classification of the representation of $\textrm{Hol}(\overline{\mathbb{D}})$ on a weighted Hardy space $H^2_{\beta}$ with $w_k\rightarrow 1$. Notice that there is no standard form of strongly irreducible operator. It is also valuable to describe when $M_f$ is similar to $M_g$ if $M_f$ and $M_g$ are strongly irreducible operators on $H^2_{\beta}$, which is related to the boundedness of composition operator $C_{\varphi}$ for $\varphi\in\textrm{Aut}(\mathbb{D})$.

In \cite{JZ}, Jiang and Zheng studied the similarity of the operator representation $M_f$ on the weighted Bergman spaces. Furthermore, Ji and Shi studied the similarity of the operator representation $M_f$ on the Sobolev disk algebra in \cite{Ji}. More recently, E. Gallardo-Guti\'{e}rrez and J. Partington \cite{GP22} obtained some similar results of Jiang and Zheng \cite{JZ}. In addition, the boundedness of composition operator $C_{\varphi}$ for $\varphi\in\textrm{Aut}(\mathbb{D})$ has been obtained in the weighted Bergman spaces, the weighted Dirichlet spaces, Sobolev space and so on (we refer to \cite{C95}). In this article, we aim to study the similarity of the operator representation $M_f$ on more general spaces. Firstly, we will introduce the growth types of weighted Hardy spaces, which are inspired by the growth types of finitely generated groups. In geometry group theory, one of the milestones is M. Gromov's polynomial growth theorem in \cite{G81}: "Finitely generated groups have polynomial growth if and only if they are virtually nilpotent." The notions concerned can be found in \cite{Loh}.
\begin{definition}[Quasi-equivalence of (generalized) growth functions, \cite{Loh} pp. 171] \
\begin{enumerate}
\item  A generalized growth function is a function of type $\mathbb{R}^+\rightarrow\mathbb{R}^+$ that is nondecreasing.
\item Let $f, g: \mathbb{R}^+\rightarrow\mathbb{R}^+$ be generalized growth functions. We say that $g$ quasi-dominates $f$ if there exist $c, b\in\mathbb{R}^+$ such that
\[
f(r)\leq c\cdot g(c\cdot r+b)+b, \ \ \ \text{for any} \ r\in\mathbb{R}^+.
\]
If $g$ quasi-dominates $f$, then we write $f \prec g$.
\item Two generalized growth functions $f, g: \mathbb{R}^+\rightarrow\mathbb{R}^+$ are quasi-equivalent if both $f \prec g$ and $g \prec f$; if $f$ and $g$ are quasi-equivalent, then we write $f \sim_{QE} g$.
\end{enumerate}
\end{definition}

\begin{definition}[Growth types of finitely generated groups, \cite{Loh} pp. 174] \

Let $G$ be a finitely generated group.
\begin{enumerate}
\item The growth type of $G$ is the (common) quasi-equivalence class of all
growth functions of $G$ with respect to finite generating sets of $G$.
\item The group $G$ is of exponential growth if it has the growth type of the
exponential map $(x\mapsto {\textrm e}^x)$.
\item The group $G$ has polynomial growth if for one (and hence every) finite
generating set $S$ of $G$ there is an $a\in\mathbb{R}^+$ such that ($x\mapsto x^a$) quasi-dominates the growth function of $G$ with respect to $S$.
\item The group $G$ is of intermediate growth if it is neither of exponential nor of polynomial growth.
\end{enumerate}
\end{definition}

If $w_k$ non-increasingly converges to $1$, the weighted sequence $\beta$ induces a generalized growth function $x\mapsto\beta_{[x]}$, where $[x]$ means the integer part of $x$.  Furthermore, if
\[
\sup\limits_{k}(k+1)(w_k-1)=M<\infty,
\]
then
\[
(x\mapsto\beta_{[x]})\prec (x\mapsto x^M),
\]
which implies $\beta$ is of polynomial growth.
Notice that $\beta$ can not be of exponential growth because of $w_k\rightarrow 1$. We could use $w$ to introduce the growth type of the weighted Hardy space $H^2_{\beta}$ with $w_k\rightarrow 1$ as follows.

\begin{definition}
Let $w=\{w_k\}_{k=1}^{\infty}$  be a sequence of positive numbers with $w_k\rightarrow 1$.
\begin{enumerate}
 \item \ If $\sup_{k}(k+1)|w_k-1|<\infty$, we say that the weighted Hardy space $H^2_{\beta}$ is of polynomial growth.
 \item \ If $\sup_{k}(k+1)|w_k-1|=\infty$, we say that the weighted Hardy space $H^2_{\beta}$ is of intermediate growth.
\end{enumerate}
\end{definition}

\begin{remark}
It is easy to see that the condition $\sup_{k}(k+1)|w_k-1|<\infty$ holds if and only if there exists a positive number $M$ such that for each $k\in\mathbb{N}$,
\[
\frac{k+1}{k+M+1}\leq w_k \leq \frac{k+M+1}{k+1}.
\]
Moreover, such weighted Hardy space $H^2_{\beta}$ is said to be of $M$-polynomial growth. Roughly speaking, the polynomial growth condition for a weighted Hardy space $H^2_{\beta}$ implies that $\beta_n$ is controlled by a monomial of $(n+1)$ as an upper bound and a monomial of $\frac{1}{n+1}$ as a lower bound.
\end{remark}

In the present paper, we aim to study on the following three aspects.

($\mathbf{I}$).  An analytic self-map $g: \mathbb{D}\rightarrow \mathbb{D}$ induces a linear operator  $C_g: H^2_{\beta}\rightarrow H^2_{\beta}$, defined by
\[
C_g(f)=f(g),  \ \ \ \text{for any} \ f\in H^2_{\beta}.
\]
Then the operator $C_g$ is said to be a composition operator. The boundedness of the composition operators with symbols of analytic automorphisms is an important problem in analytic function theory and operator theory. There have been obtained many results related to this topic, see \cite{C95} for instance. However, there also have been some unresolved questions. For example, C. Cowen and B. MacCluer proposed a conjecture  \cite{C98} as follows.
\begin{conjecture}[\cite{C98}]\label{Cconj}
	If $\beta_{n}$ is monotone decreasing (or satisfies some other reasonable regularity requirement), $H^2_{\beta}$ is automorphism invariant if and only if there exists a positive integer $n$ so that $(1-z)^{-n}$ is not in $H^2_{\beta}$.
\end{conjecture}

The boundedness of the composition operators with symbols of analytic automorphisms plays an important role in the similarity classification of multiplication operators. It is not difficult to see that, for any $\varphi\in {\textrm Aut(\mathbb{D})}$,  $M_z$ is similar to $M_{\varphi}$, denoted by $M_z\sim M_{\varphi}$, if and only if
the composition operator $C_{\varphi}:H^2_{\beta}\rightarrow H^2_{\beta}$ is an isomorphism. 

On the other hand, weakly homogeneous operator was introduced by Clark and Misra \cite{CM}, which is a generalization of homogeneous operator. A bounded linear operator $T$ on a complex separable Hilbert space is said to be a weakly homogeneous operator if $\sigma(T)\subseteq\overline{\mathbb{D}}$ and $T$ is similar to $\varphi(T)$ for any $\varphi\in {\textrm Aut(\mathbb{D})}$. The boundedness of the composition operators with symbols of analytic automorphisms implies that the multiplication operator $M_z$ is weakly homogeneous. 

In this paper, we aim to study  the boundedness of composition operators with symbols of analytic automorphisms acting on weighted Hardy spaces of polynomial growth. Moreover, we also study the boundedness of composition operators induced by analytic functions in $\textrm{Hol}(\overline{\mathbb{D}})$ on weighted Hardy spaces of polynomial growth, and give counterexamples on a weighted Hardy space of intermediate growth to show the necessity of the setting of polynomial growth condition.

($\mathbf{II}$).  R. Douglas proposed a question in \cite{D07} to ask the similarity of $M_B$ and $\bigoplus_{1}^{m}M_z$ on the classical Bergman space as follows.
\begin{question}[\cite{{D07}}]\label{Dconj}
Is $M_B$ on $L^2_a(D)$ similar to $M_z\otimes \mathbf{I}_m$ on $L^2_a(D)\otimes\mathbb{C}^m$, where $L^2_a(D)$ is the Bergman space and $m$ is the multiplicity of the finite Blaschke product $B(z)$?
\end{question}
Jiang and Li \cite{JL} answered the problem in the affirmative. Furthermore, Jiang and Zheng \cite{JZ} generalized this conclusion on the weighted Bergman spaces. In this paper, we aim to answer the generalized Douglas's question on the weighted Hardy spaces of polynomial growth.

($\mathbf{III}$). We aim to study the Jordan representation theorem of $\textrm{Hol}(\overline{\mathbb{D}})$ on the weighted Hardy spaces of polynomial growth. Furthermore,
we will give the similarity classification of the representation of $\textrm{Hol}(\overline{\mathbb{D}})$, which generalize a result of Jiang and Zheng (Theorem 1.1 in \cite{JZ}) from the weighted Bergman spaces to the weighted Hardy spaces of polynomial growth.

We list our main results in the next section.

\section{Summary of main results}

The first main result in the paper is to obtain the boundedness of the composition operators with symbols of  M\"{o}bius transformations acting on a weighted Hardy space of polynomial growth, which also plays an important role in the whole paper.
\begin{theorem}\label{Mz}
	Let $H^2_{\beta}$ be a weighted Hardy space of polynomial growth induced by the weight sequence $w=\{w_k\}_{k=1}^{\infty}$.  Then for any $\varphi\in {\textrm Aut(\mathbb{D})}$, $M_z\sim M_{\varphi}$, i.e., $M_z$ is weakly homogeneous on $H^2_{\beta}$. In fact, the composition operator $C_{\varphi}:H^2_{\beta}\rightarrow H^2_{\beta}$ is an isomorphism.
\end{theorem}
Correspongding to the conjecture of Cowen and MacCluer, this theorem provides a sufficent condition to $H^2_{\beta}$ being automorphism invariant, in which we 
give a requirement of $w_n$ instead of $\beta_{n}$ (no hypothesis of the monotonicity of $\beta_{n}$ or $w_{n}$). In particular, one could see that if $\beta_{n}$ is monotone decreasing and 
$H^2_{\beta}$ is of $M$-polynomial growth for some positive integer $M$, then $(1-z)^{-(M+1)}$ is not in $H^2_{\beta}$.

Since lots of weighted Hardy spaces of polynomial growth are defined without measures, the measure method is invalid. We have to develop some techniques different from the case of the weighted Bergman spaces or the weighted Dirichlet spaces. Roughly speaking, we will show that $C_{\varphi}$ is a base transformation between two  Riesz bases to prove the above theorem. In section 3,  we study some base properties of the sequence induced by a M\"{o}bius transformation or a finite Blaschke product in base theory, and obtain a series of fundamental results as preliminaries to prove our main results.

We also obtain the boundedness of the composition operators with symbols of analytic functions on the unit closed disk acting on a weighted Hardy space of polynomial growth.
\begin{theorem}\label{CompBHol}
	Let $H^2_{\beta}$ be the weighted Hardy space of polynomial growth induced by a weight sequence $w=\{w_k\}_{k=1}^{\infty}$.
	If $\psi(z)$ is an analytic function on $\overline{\mathbb{D}}$ with $\psi(\mathbb{D})\subseteq \mathbb{D}$, then $C_{\psi}$ is bounded on $H^2_{\beta}$.
\end{theorem}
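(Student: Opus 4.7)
The plan is to bootstrap boundedness from the classical Hardy space $H^2$ up the tower of polynomial-growth weighted Hardy spaces by iterating Proposition \ref{CPB}, and then to transfer the result to an arbitrary polynomial-growth $H^2_\beta$ through Corollary \ref{ECowen}. The guiding idea is that the spaces $H^2_{\beta^{(M)}}$ defined by $\beta^{(M)}_n = (n+1)^M$ form a cofinal family inside the polynomial-growth class, so it suffices to establish boundedness of $C_\psi$ on each of them.

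First, since $\psi\in\mathrm{Hol}(\overline{\mathbb{D}})$ with $\psi(\mathbb{D})\subseteq\mathbb{D}$ is in particular an analytic self-map of $\mathbb{D}$, Littlewood's subordination principle yields boundedness of $C_\psi$ on the classical Hardy space $H^2$; observe that $H^2 = H^2_{\beta^{(0)}}$ with $\beta^{(0)}_k\equiv 1$. I would then iterate Proposition \ref{CPB}, setting $\beta^{(j+1)}_n = (n+1)\beta^{(j)}_n$. At each step $\psi$ still lies in $\mathrm{Hol}(\overline{\mathbb{D}})$ with $\psi(\mathbb{D})\subseteq\mathbb{D}$ (a fixed hypothesis on $\psi$), and the new weight sequence $w^{(j+1)}_k = ((k+1)/k)^{j+1}$ still tends to $1$, so the hypotheses of Proposition \ref{CPB} remain satisfied. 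After $M$ iterations I obtain boundedness of $C_\psi$ on $H^2_{\beta^{(M)}}$ with $\beta^{(M)}_n = (n+1)^M$.

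For the given polynomial-growth $H^2_\beta$, the remark following the definition of polynomial growth supplies $M\in\mathbb{N}$ with $w_k \leq \widetilde{w}_k := (k+M+1)/(k+1)$ for every $k$. Let $H^2_{\widetilde{\beta}}$ denote the weighted Hardy space generated by $\widetilde{w}$. As noted in the proof of Corollary \ref{boundcontr}, $H^2_{\widetilde{\beta}}$ is equivalent to $H^2_{\beta^{(M)}}$, so the boundedness from the previous step transfers to $H^2_{\widetilde{\beta}}$. Since $H^2_{\widetilde{\beta}}$ is itself of polynomial growth and $w_k \leq \widetilde{w}_k$ for all $k$, Corollary \ref{ECowen} now gives the desired boundedness of $C_\psi$ on $H^2_\beta$.

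The only real obstacle is bookkeeping: one must verify at each iteration that the weight sequence $w^{(j)}_k$ tends to $1$ (immediate since $(1+1/k)^j\to 1$), and that the comparison of $H^2_{\widetilde{\beta}}$ with $H^2_{\beta^{(M)}}$ is genuinely an equivalence of weighted Hardy spaces, which is precisely the computation already carried out in Corollary \ref{boundcontr}. Once these are in place, the argument is a short chain of implications: Littlewood subordination $\Rightarrow$ Proposition \ref{CPB} applied $M$ times $\Rightarrow$ Corollary \ref{ECowen}, and no new analytic estimates are required.
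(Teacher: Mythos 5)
Your proposal is correct and follows essentially the same route as the paper: boundedness on $H^2$, then $M$ iterations of Proposition \ref{CPB} to reach the space with $\beta_n=(n+1)^M$ (equivalently $H^2_{\widetilde{\beta}}$, exactly as in Corollary \ref{boundcontr}), and finally Corollary \ref{ECowen} to descend to $H^2_{\beta}$. The extra bookkeeping you flag (each intermediate weight tending to $1$, and the equivalence of $H^2_{\widetilde{\beta}}$ with $H^2_{\beta^{(M)}}$) is exactly what the paper implicitly relies on.
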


Furthermore, we give an affirmative answer to the generalized Douglas's question in the setting of weighted Hardy spaces of polynomial growth.

\begin{theorem}\label{MB}
	Let $H^2_{\beta}$ be a weighted Hardy space of polynomial growth, and let $B(z)$ be a finite Blaschke product with order $m$ on $\mathbb{D}$. Then $M_B\sim \bigoplus_{1}^{m}M_z$.
\end{theorem}

In addition, we could give counterexamples on a weighted Hardy space of intermediate growth to show the necessity of the setting of polynomial growth condition.

\begin{theorem}\label{img}
	Let $H^2_{\beta}$ be the weighted Hardy space induced by a weight sequence $w=\{w_k\}_{k=1}^{\infty}$ with $w_k\rightarrow 1$.
	Suppose that there exists a sequence of positive numbers $\{\alpha_j\}_{j=1}^{\infty}$ tending to infinity and a sequence of positive integers $\{n_j\}_{j=1}^{\infty}$ tending to infinity such that, for every $0\leq k\leq n_j$,
	\[
	\frac{\beta_{n_j}}{\beta_k}\geq \frac{\beta_k^{(\alpha_j)}}{\beta_{n_j}^{(\alpha_j)}}.
	\]
	Then, for any $t\in(0,1)$,
	\[
	\frac{\|\varphi^{n_j}_t(z)\|^2_{\beta^{-1}}}{\|z^{n_j}\|^2_{\beta^{-1}}} \rightarrow\infty, \ \ \ \text{as} \ j\rightarrow\infty.
	\]
	Consequently, the composition operators $C_{\varphi_t}: H^2_{\beta^{-1}}\rightarrow H^2_{\beta^{-1}}$ and $C_{\varphi_t}: H^2_{\beta}\rightarrow H^2_{\beta}$ are unbounded. In particular, if
	\[
	\lim\limits_{k\rightarrow\infty}(k+1)(w_k-1)=+\infty \ \ \ \ \text{or} \ \ \ \
	\lim\limits_{k\rightarrow\infty}(k+1)(w_k-1)=-\infty,
	\]
	then the conclusion holds.
\end{theorem}

 Together with Theorem \ref{MB} and the technique of K-theory of Banach algebras, we could obtain the Jordan representation theorem and similarity classification of the representation of $\textrm{Hol}(\overline{\mathbb{D}})$ on the weighted Hardy spaces of polynomial growth.

\begin{theorem}[Jordan representation theorem of $\textrm{Hol}(\overline{\mathbb{D}})$]\label{JPT}
	Given any $f\in \textrm{Hol}(\overline{\mathbb{D}})$. There exist a unique positive integer $m$ and an analytic function $h\in \textrm{Hol}(\overline{\mathbb{D}})$ with $\{M_h \}_{H^2_{\beta}}' =H^{\infty}_{\beta}$, such that
	\[
	M_f\sim\bigoplus\limits_1^m M_h,
	\]
	where $h$ is unique in the sense of analytic automorphism group action, i.e., if there exists another $g\in \textrm{Hol}(\overline{\mathbb{D}})$ with $\{M_g \}_{H^2_{\beta}}' =H^{\infty}_{\beta}$ such that
	\[
	M_f\sim\bigoplus\limits_1^m M_g,
	\]
	then there is a M\"{o}bius transformation $\varphi\in \textrm{Aut}(\mathbb{D})$ such that $g=h\circ \varphi$.
\end{theorem}

\begin{theorem}\label{Simlar}
	Let $f_1, f_2\in \textrm{Hol}(\overline{\mathbb{D}})$. Then, $M_{f_1}$ is similar to $M_{f_2}$ on $H^2_{\beta}$ if and only if there are two finite Blaschke products $B_1$ and
	$B_2$ with the same order and a function $h \in \textrm{Hol}(\overline{\mathbb{D}})$ such that
	\[
	f_1=h\circ B_1 \ \ \ \text{and} \ \ \ f_2=h\circ B_2.
	\]
\end{theorem}

We will give a series of fundamental results as preliminaries in section 3, and then give the proofs of Theorem \ref{Mz}, Theorem \ref{CompBHol}, Theorem \ref{MB} and Theorem \ref{img} in section 4, and  give the proofs of Theorem \ref{JPT} and Theorem \ref{Simlar} in section 5.

\section{Bases theory and transformation operators}

In this section, let us review some basic concepts in bases theory on Hilbert space firstly (we refer to \cite{Nik}, more generally, one can see these concepts on Banach space in \cite{Sing}). Let $\mathcal{H}$ be a complex separable Hilbert space and $\mathfrak{X}=\{x_n\}_{n=0}^{\infty}$ be a sequence of vectors in $\mathcal{H}$. The sequence $\mathfrak{X}$ is said to be total, if $\mathfrak{X}$ spans the whole space $\mathcal{H}$, i.e.,
\[
\overline{{\textrm span}\mathfrak{X}}=\overline{\left\{\sum\limits_{n=0}^{\infty}c_nx_n; \ x_n\in \mathfrak{X}, \ c_n\in \mathbb{C} \ \text{is zero except finite items}\right\}}=\mathcal{H}.
\]
$\mathfrak{X}$ is said to be finitely linear independent, if its arbitrary finite subsequence is linear independent. $\mathfrak{X}$ is said to be a Schauder base, if for every $x\in \mathcal{H}$, there exists a unique sequence of complex numbers $\{c_n\}_{n=0}^{\infty}$ such that
\[
x=\sum\limits_{n=0}^{\infty}c_nx_n,\ \ \text{i.e.}, \ \ \ \lim\limits_{n\rightarrow\infty}\|x-\sum\limits_{k=0}^{n}c_kx_k\|=0.
\]
A Schauder base $\mathfrak{X}=\{x_n\}_{n=0}^{\infty}$ is said to be a normalized base if $\|x_n\|=1$ for all $n=0,1,\ldots$.
Moreover, a Schauder base $\mathfrak{X}=\{x_n\}_{n=0}^{\infty}$ is called a bounded (or quasinormed) base if
\[
0<\inf\limits_{0\leq n <\infty}\|x_n\|\leq \sup\limits_{0\leq n <\infty}\|x_n\|<\infty.
\]
A Schauder base $\mathfrak{X}=\{x_n\}_{n=0}^{\infty}$ is said to be an unconditional base, if every convergent series of the form $\sum_{n=0}^{\infty}c_nx_n$ is unconditional convergent, i.e., independently of order. Furthermore, A Schauder base is called conditional base, if it is not an unconditional base. $\mathfrak{X}=\{x_n\}_{n=0}^{\infty}$ is said to be a Riesz base, if there exists a bounded invertible operator $V$ such that $\{V(x_n)\}_{n=0}^{\infty}$ is an orthonormal base, i.e., $V(x_m)$ is orthogonal to $V(x_n)$ for all $m\neq n$, and $\|V(x_n)\|=1$ for all $n=0,1,\ldots$.

Let $\mathfrak{X}=\{x_n\}_{n=0}^{\infty}$ and $\mathfrak{Y}=\{y_n\}_{n=0}^{\infty}$ be two sequences of vectors in a complex separable Hilbert space $\mathcal{H}$.
Define
\[
\langle  \mathfrak{X}, \mathfrak{Y}\rangle_{\mathcal{H}}=(\langle  x_j, y_i\rangle_{\mathcal{H}})_{i,j}.
\]

In particular, the Gram matrix of $\mathfrak{X}$ on $\mathcal{H}$ is defined by
\[
\Gamma_{\mathcal{H}}(\mathfrak{X})=\langle  \mathfrak{X}, \mathfrak{X}\rangle_{\mathcal{H}}=(\langle  x_i, x_j\rangle_{\mathcal{H}})_{i,j}.
\]

Next, we will study some base properties of the sequence induced by a M\"{o}bius transformation or a finite Blaschke product in bases theory. It is useful of a class of geometric operators introduced by M. Cowen and R. Douglas \cite{CD} as follows.
\begin{definition}[\cite{CD}]
For $\Omega$ a connected open subset of $\mathbb{C}$ and $n$ a
positive integer, let $\mathbf {B}_{n}(\Omega)$ denote the
operators $T$ in $\mathcal{L}(\mathcal{H})$ which satisfy:

(a) $\Omega \subseteq \sigma(T)=\{\omega \in \mathbb{C}: \ T-\omega
 \ \text{not \ invertible}\}$;

(b) ${\textrm Ran}(T-\omega)=\mathcal {H}$ for every $\omega$ in $\Omega$;

(c) $\overline{{\textrm span}\{{\textrm Ker}(T-\omega); {\omega\in \Omega}\}}=\mathcal {H}$;

(d) $\dim {\textrm Ker}(T-\omega)=n$ for every $\omega$ in $\Omega$.
\end{definition}
\begin{lemma}\label{leftin}
Suppose that $H^2_{\beta}$ is the weighted Hardy space induced by a weight sequence $w=\{w_k\}_{k=1}^{\infty}$ with $w_k\rightarrow 1$. Let $B(z)=\prod^m_{j=1}\frac{z_{j}-z}{1-\overline{z_{j}}z}$ be a Blaschke product on $\mathbb{D}$ with order $m$. Denote
\[
\overline{B}(z)=\overline{B(\overline{z})}=\sum\limits^{\infty}_{k=0}\overline{\widehat{B}(k)}z^k.
\]
Let $S$ be the standard left inverse of the multiplication operator $M_z$ on $H^2_{\beta}$, defined by
\[
S(f(z))=\frac{f(z)-f(0)}{z}, \ \ \ \text{for every} \ f(z)\in H^2_{\beta}.
\]
Then $\overline{B}(S)\in \mathbf {B}_m(\mathbb{D})$ and $\overline{B}(S)B(M_z)={\textbf I}$, where ${\textbf I}$ means the identity operator.
\end{lemma}

\begin{proof}
Notice that $S\in\mathbf {B}_1(\mathbb{D})$ and $\overline{B}(z)$ is also a Blaschke product on $\mathbb{D}$ with order $m$. Then it is easy to see $\overline{B}(S)\in \mathbf {B}_m(\mathbb{D})$.

For any $\alpha\in\mathbb{D}$, let $\varphi_{\alpha}=\frac{\alpha-z}{1-\overline{\alpha}z}$. Then, it follows from $SM_z={\textbf I}$ that
\begin{align*}
\overline{\varphi_{\alpha}}(S)\varphi_{\alpha}(M_z)
&=(\overline{\alpha}-S)\sum\limits_{k=0}^{\infty}(\alpha S)^k(\alpha-M_z)({\textbf I}-\overline{\alpha}M_z)^{-1} \\
&=-(\overline{\alpha}-S)M_z({\textbf I}-\overline{\alpha}M_z)^{-1} \\
&={\textbf I}.
\end{align*}
Since
$B(z)=\prod^m_{j=1}\varphi_{z_{j}}(z)$,
we have
\[
\overline{B}(S)B(M_z)=\prod\limits_{j=1}^{m}\overline{\varphi_{z_{j}}}(S)\prod\limits_{j=1}^{m}\varphi_{z_{j}}(M_z)={\textbf I}.
\]
\end{proof}

\begin{proposition}\label{Btotal}
Let $H^2_{\beta}$ be the weighted Hardy space induced by a weight sequence $w=\{w_k\}_{k=1}^{\infty}$ with $w_k\rightarrow 1$, and let $B(z)=\prod^m_{j=1}\frac{z_{j}-z}{1-\overline{z_{j}}z}$ be a Blaschke product on $\mathbb{D}$ with order $m$.
Suppose that $\{f_1, \cdots, f_m\}$ is a base of ${\textrm Ker}(\overline{B}(S))$.  Then
\[
\{f_1B^n, \cdots, f_mB^n; n=0,1,2,\ldots\}
\]
is a total and finitely linear independent sequence in $H^2_{\beta}$.
\end{proposition}

\begin{proof}
First, we will show that, for any $N=0,1,2,\ldots$,
\[
\{f_1B^n, \cdots, f_mB^n; n=0,1,\ldots, N\}
\]
is linear independent. Assume
\[
\sum\limits_{n=0}^N\sum\limits_{j=1}^m\lambda_{nj}f_j(z)B^n(z)=0.
\]
Then, by Lemma \ref{leftin},
\[
0=\overline{B}^N(S)(\sum\limits_{n=0}^N\sum\limits_{j=1}^m\lambda_{nj}f_j(z)B^n(z))=\sum\limits_{j=1}^m\lambda_{Nj}f_j(z).
\]
Since $\{f_1, \cdots, f_m\}$ is a base of ${\textrm Ker}(\overline{B}(S))$, we get
\[
\lambda_{N1}=\lambda_{N2}=\cdots=\lambda_{Nm}=0.
\]
In this manner, one can see
\[
\lambda_{nj}=0, \ \ \text{for all} \ n=0,1,\ldots, N \ \text{and} \ j=1,2,\ldots, m.
\]

In addition, for any $n=0,1,\ldots,N$ and any $j=1,2,\ldots, m$,
\[
\overline{B}^{N+1}(S)(f_j(z)B^n(z))=0.
\]
Then, by ${\dim}{\textrm Ker}(\overline{B}^{N+1}(S))=m(N+1)$, the sequence
\[
\{f_1B^n, \cdots, f_mB^n; n=0,1,\ldots, N\}
\]
is a base of ${\textrm Ker}(\overline{B}^{N+1}(S))$.

Therefore, by
\[
\overline{{\textrm span}\{{\textrm Ker}(\overline{B}^N(S)); N=1,2,\ldots \}}=H^2_{\beta},
\]
the sequence
\[
\{f_1B^n, \cdots, f_mB^n; n=0,1,2,\ldots\}
\]
is total and finitely linear independent in $H^2_{\beta}$.
\end{proof}

Furthermore, we could give a concrete base of ${\textrm Ker}(\overline{B}(S))$ if the Blaschke product $B$ has distinct zero points.

\begin{lemma}[\cite{JZ}]\label{mmatrix}
Let $z_1, \cdots, z_{m}$ be $m$ distinct points in $\mathbb{D}$.
Then the matrix
\[
\begin{bmatrix}
\frac{1}{1-|z_1|^2} & \frac{1}{1-\overline{z_1}z_2} & \frac{1}{1-\overline{z_1}z_3}  & \cdots & \frac{1}{1-\overline{z_1}z_m} \\
\frac{1}{1-\overline{z_2}z_1}   & \frac{1}{1-|z_2|^2} & \frac{1}{1-\overline{z_2}z_3}  & \cdots & \frac{1}{1-\overline{z_2}z_m}  \\
\frac{1}{1-\overline{z_3}z_1}  & \frac{1}{1-\overline{z_3}z_2}   & \frac{1}{1-|z_3|^2}  & \cdots  & \frac{1}{1-\overline{z_3}z_m} \\
\vdots   & \vdots & \vdots & \ddots &\vdots \\
\frac{1}{1-\overline{z_m}z_1}   & \frac{1}{1-\overline{z_m}z_2} & \frac{1}{1-\overline{z_m}z_3} & \cdots &\frac{1}{1-|z_m|^2} \\
\end{bmatrix}
\]
is invertible.
\end{lemma}

\begin{theorem}\label{disBtotal}
Let $H^2_{\beta}$ be the weighted Hardy space induced by a weight sequence $w=\{w_k\}_{k=1}^{\infty}$ with $w_k\rightarrow 1$, and let $B(z)=\prod^m_{j=1}\frac{z_{j}-z}{1-\overline{z_{j}}z}$ be a Blaschke product on $\mathbb{D}$ with $m$ distinct zero points.
Then
\[
\{\frac{B^n(z)}{1-\overline{z_{1}}z}, \cdots, \frac{B^n(z)}{1-\overline{z_{m}}z}; n=0,1,2,\ldots\}
\]
is a total and finitely linear independent sequence in $H^2_{\beta}$.
\end{theorem}

\begin{proof}
Following from Proposition \ref{Btotal}, it suffices to prove the finite sequence $\{\frac{1}{1-\overline{z_{j}}z}\}_{j=1}^m$ is a base of ${\textrm Ker}(\overline{B}(S))$.
Notice that $\{\frac{1}{1-\overline{z_{j}}z}\}_{j=1}^m$ is  linear independent in $H^2_{\beta}$ if and only if so is in the classical Hardy space $H^2$. By Lemma \ref{mmatrix}, the sequence $\{\frac{1}{1-\overline{z_{j}}z}\}_{j=1}^m$ is  linear independent in $H^2_{\beta}$. In addition, for $j=1,2,\ldots, m$,
\begin{align*}
\overline{B}(S)(\frac{1}{1-\overline{z_{j}}z})&=(\prod\limits_{i\neq j}\overline{\varphi_{z_{i}}}(S)) \overline{\varphi_{z_{j}}}(S)(\frac{1}{1-\overline{z_{j}}z}) \\
&=(\prod\limits_{i\neq j}\overline{\varphi_{z_{i}}}(S)) ({\textbf I}-z_jS)^{-1}(\overline{z_{j}}-S)(\sum\limits_{k=0}^{\infty} (\overline{z_{j}}z)^k) \\
&=0.
\end{align*}
Then, by ${\dim}{\textrm Ker}(\overline{B}(S))=m$, the sequence $\{\frac{1}{1-\overline{z_{j}}z}\}_{j=1}^m$ is a base of ${\textrm Ker}(\overline{B}(S))$.
\end{proof}

\begin{corollary}\label{twoBtotal}
Let $H^2_{\beta}$ be the weighted Hardy space induced by a weight sequence $w=\{w_k\}_{k=1}^{\infty}$ with $w_k\rightarrow 1$. Let $\varphi_{z_0}(z)=\frac{z_0-z}{1-\overline{z_0}z}$, $z_0\in\mathbb{D}\setminus\{0\}$, and $B(z)=z\varphi_{z_0}(z)$.
Then, the sequences
\[
\{B^n(z), zB^n(z); n=0,1,2,\ldots\} \ \ \ \text{and} \ \ \ \{B^n(z), \varphi_{z_0}(z)B^n(z); n=0,1,2,\ldots\}
\]
are total and finitely linear independent in $H^2_{\beta}$, respectively.
\end{corollary}

\begin{proof}
According to Theorem \ref{disBtotal}, the sequence
\[
\{B^n(z), \frac{B^n(z)}{1-\overline{z_{0}}z}; n=0,1,2,\ldots\}
\]
is total and finitely linear independent in $H^2_{\beta}$. Since
{\small
\[
{\textrm span}\{1, \frac{1}{1-\overline{z_{0}}z}\}=\{\frac{az+b}{1-\overline{z_{0}}z}; a,b\in\mathbb{C}\}={\textrm span}\{\frac{1}{1-\overline{z_{0}}z}, \frac{z}{1-\overline{z_{0}}z}\}={\textrm span}\{1, \frac{z_0-z}{1-\overline{z_{0}}z}\},
\]
}
it follows from Proposition \ref{Btotal} that the sequences
\[
\{\frac{B^n(z)}{1-\overline{z_{0}}z}, \frac{zB^n(z)}{1-\overline{z_{0}}z}; n=0,1,2,\ldots\} \ \ \ \text{and} \ \ \ \{B^n(z), \varphi_{z_0}B^n(z); n=0,1,2,\ldots\}
\]
are total and finitely linear independent in $H^2_{\beta}$, respectively. Notice that
\[
M_{\frac{1}{1-\overline{z_{0}}z}}:H^2_{\beta}\rightarrow H^2_{\beta}
\]
is a bounded invertible operator. Thus, the sequence
\[
\{B^n(z), zB^n(z); n=0,1,2,\ldots\}
\]
is also total and finitely linear independent in $H^2_{\beta}$.
\end{proof}

In particular, we could obtain that the powers of a M\"{o}bius transformation constitute a Schauder base by the uniqueness of Taylor expansion.
\begin{proposition}\label{Schauder}
Let $H^2_{\beta}$ be the weighted Hardy space induced by a weight sequence $w=\{w_k\}_{k=1}^{\infty}$ with $w_k\rightarrow 1$. Let $\varphi_{z_0}(z)=\frac{z_0-z}{1-\overline{z_0}z}$, $z_0\in\mathbb{D}$, be a M\"{o}bius transformation on $\mathbb{D}$.
Then, the sequence $\{\varphi_{z_0}^n\}_{n=0}^{\infty}$ is a Schauder base of $H^2_{\beta}$.
\end{proposition}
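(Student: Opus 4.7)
My plan is to identify canonical expansion coefficients via Taylor expansion, derive uniqueness from the uniqueness of Taylor expansion, and verify norm convergence first on a dense subclass before extending to all of $H^2_\beta$. The first step is to note that $\{\varphi_{z_0}^n\}_{n=0}^\infty$ is total and finitely linearly independent in $H^2_\beta$. This follows from Theorem \ref{disBtotal} with $m=1$ and $z_1=z_0$ (the order-$1$ Blaschke product case), which gives these properties for $\{\varphi_{z_0}^n/(1-\overline{z_0}z)\}_{n=0}^\infty$; since both $1-\overline{z_0}z$ and $(1-\overline{z_0}z)^{-1}$ lie in ${\rm Hol}(\overline{\mathbb{D}})\subseteq H^{\infty}_\beta$, the operator $M_{1-\overline{z_0}z}$ is bounded and invertible on $H^2_\beta$, transferring these properties to $\{\varphi_{z_0}^n\}$.

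Given $f\in H^2_\beta$, since $\varphi_{z_0}$ is an involutive automorphism of $\mathbb{D}$ the function $g(u):=f(\varphi_{z_0}(u))$ is analytic on $\mathbb{D}$ and has a unique Taylor expansion $g(u)=\sum_{n=0}^\infty c_n u^n$. Substituting $u=\varphi_{z_0}(z)$ and using $\varphi_{z_0}\circ\varphi_{z_0}={\rm id}_{\mathbb{D}}$ yields $f(z)=\sum_{n=0}^\infty c_n\varphi_{z_0}(z)^n$ pointwise on $\mathbb{D}$. Uniqueness of the coefficient sequence is then immediate: any other $H^2_\beta$-convergent representation $f=\sum c_n'\varphi_{z_0}^n$ also converges pointwise on $\mathbb{D}$ (since point evaluations on $\mathbb{D}$ are bounded functionals on $H^2_\beta$), so the substitution $u=\varphi_{z_0}(z)$ reduces the identity to two Taylor expansions of the same analytic function, forcing $c_n'=c_n$.

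For norm convergence of $\sum_{n=0}^N c_n\varphi_{z_0}^n$ to $f$ in $H^2_\beta$, I would first treat the dense subclass $f\in{\rm Hol}(\overline{\mathbb{D}})$: here $g=f\circ\varphi_{z_0}$ extends analytically to $\{|u|<1/|z_0|\}$, so the $c_n$ decay geometrically. Combined with the sub-exponential growth of $\|\varphi_{z_0}^n\|_{H^2_\beta}\leq\|\varphi_{z_0}^n(S_w)\|_{H^2}$ (via Shields's analytic functional calculus, since the spectral radius of $\varphi_{z_0}(S_w)$ is $1$), the series converges absolutely in $H^2_\beta$ with sum forced to be $f$ by pointwise identification. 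Extension to all $f\in H^2_\beta$ would then follow by density together with uniform boundedness of the partial-sum projections $P_N:f\mapsto\sum_{n=0}^N c_n(f)\varphi_{z_0}^n$. I expect the main obstacle to lie precisely in this uniform bound $\sup_N\|P_N\|<\infty$---the essential Schauder basis content beyond totality---which I anticipate will rely on the transformation-operator framework introduced in Section~2 rather than on boundedness of the composition operator $C_{\varphi_{z_0}}$, since the latter can fail on intermediate-growth weighted Hardy spaces.
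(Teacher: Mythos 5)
Your first two steps coincide with the paper's own proof: the paper likewise derives totality and finite linear independence from Theorem \ref{disBtotal} (the order-one Blaschke case), writes $f(z)=\sum_{k}\lambda_k\varphi_{z_0}^k(z)$ with $\lambda_k$ the Taylor coefficients of $f\circ\varphi_{z_0}$, and deduces uniqueness of the coefficients from the uniqueness of the Taylor expansion after the substitution $w=\varphi_{z_0}(z)$. Your uniqueness argument is in fact slightly more careful, since you record that an $H^2_{\beta}$-convergent series converges pointwise on $\mathbb{D}$ before invoking Taylor uniqueness.

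The gap is the one you yourself flag: you never establish that the partial sums $\sum_{n=0}^{N}c_n\varphi_{z_0}^n$ converge to $f$ in the $H^2_{\beta}$ norm for a general $f\in H^2_{\beta}$, equivalently that $\sup_N\|P_N\|<\infty$. Your argument for $f\in{\rm Hol}(\overline{\mathbb{D}})$ (geometric decay of the $c_n$ against subexponential growth of $\|\varphi_{z_0}^n\|_{H^2_{\beta}}$) is sound and gives absolute convergence on that dense subclass, but density alone does not transfer convergence of the expansions to all of $H^2_{\beta}$ without the uniform bound on the $P_N$, and you offer no route to that bound beyond anticipating that the transformation-operator framework should supply it. To be fair, the paper's proof does not address this point either: it asserts that $f$ ``could be written'' as $\sum_k\lambda_k\varphi_{z_0}^k$ and then proves only uniqueness, so the existence of a norm-convergent expansion---the substantive content of the Schauder-basis claim beyond totality plus uniqueness of coefficients---is taken for granted there as well. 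Your proposal therefore reproduces the paper's argument in every step the paper actually carries out, and your diagnosis of where the real difficulty lies is accurate; but as a self-contained proof it is incomplete at exactly that step.
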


\begin{proof}
It follows from Theorem \ref{disBtotal} that the sequence $\{\varphi_{z_0}^n\}_{n=0}^{\infty}$ is total and finitely linear independent in $H^2_{\beta}$. For any $f\in H^2_{\beta}\subseteq \textrm{Hol}(\mathbb{D})$, we could write
\[
f(z)=\sum\limits_{k=0}^{\infty}\lambda_k\varphi_{z_0}^k(z).
\]
Let $w=\varphi_{z_0}(z)$. Then
\[
f(\varphi_{z_0}(w))=\sum\limits_{k=0}^{\infty}\lambda_kw^k, \ \ \  \text{for all} \ w\in\mathbb{D}.
\]
According to the uniqueness of the Taylor expansion of the function $f\in \textrm{Hol}(\mathbb{D})$, the sequence $\{\lambda_k\}_{k=0}^{\infty}$ is unique.
Hence, the function $f(z)$ has unique coefficients $\lambda_k$. This completes the proof.
\end{proof}

Now we focus on the characterization of Riesz base in a complex separable Hilbert space. It is well-known that Riesz base and bounded unconditional base are equivalent in a complex separable Hilbert space. However, there exist conditional bases in Hilbert space \cite{B48}, and furthermore, Olevskii gave a spectral characterization the transformation operator from an orthonormal base to a conditional base in \cite{Ole}. So it is worthy of describing Riesz bases in a computable manner. Bari had used Gram matrix to provide a sufficient and necessary condition to Riesz base \cite{B51} (see also in \cite{Nik}).

\begin{theorem}[Bari's Theorem]\label{Bari}
Let $\mathfrak{X}$  be a total sequence in a complex separable Hilbert space $\mathcal{H}$. Then $\mathfrak{X}$ is a Riesz base if and only if the Gram matrix $\Gamma_{\mathcal{H}}(\mathfrak{X})$ is a bounded invertible linear operator on $l^2$.
\end{theorem}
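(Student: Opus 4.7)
The plan is to realise $\mathfrak{X}=\{x_n\}_{n=0}^{\infty}$ as the image of the standard orthonormal basis $\{e_n\}_{n=0}^{\infty}$ of $l^2$ under a synthesis map $U:l^2\to\mathcal{H}$ sending $e_n$ to $x_n$, and then to identify the Gram matrix $\Gamma_{\mathcal{H}}(\mathfrak{X})$ with $U^{*}U$. Bari's theorem will then reduce to the elementary fact that a bounded operator between Hilbert spaces is bounded invertible if and only if $U^{*}U$ is. To initiate this, I would first define $U$ only on the dense subspace $c_{00}\subset l^2$ of finitely supported sequences by $U(a)=\sum_{n}a_n x_n$, and record the basic identity
$$
\|U(a)\|_{\mathcal{H}}^{2}=\sum_{i,j}a_{j}\overline{a_{i}}\langle x_{j},x_{i}\rangle_{\mathcal{H}}=\langle \Gamma_{\mathcal{H}}(\mathfrak{X})\,a,\,a\rangle_{l^2},\qquad a\in c_{00},
$$
which is the bridge between the operator $U$ and the Gram matrix.

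For the sufficiency direction I would assume $\Gamma_{\mathcal{H}}(\mathfrak{X})$ is bounded invertible on $l^2$. Since it is positive and self-adjoint by construction, invertibility yields constants $0<c\le C<\infty$ with $c\|a\|^{2}\le\langle \Gamma_{\mathcal{H}}(\mathfrak{X})a,a\rangle\le C\|a\|^{2}$ for every $a\in l^2$. The identity above then gives $\sqrt{c}\,\|a\|\le\|U(a)\|\le\sqrt{C}\,\|a\|$ on $c_{00}$, so $U$ extends uniquely by continuity to a bounded operator $U:l^2\to\mathcal{H}$ which is still bounded below, hence injective with closed range; the totality hypothesis then forces $\mathrm{Ran}\,U\supseteq\overline{\mathrm{span}\,\mathfrak{X}}=\mathcal{H}$, upgrading $U$ to a bounded invertible operator. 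Fixing any unitary $W:l^2\to\mathcal{H}$, the operator $V:=WU^{-1}\in\mathcal{L}(\mathcal{H})$ is bounded invertible and satisfies $V(x_{n})=W(e_{n})$, an orthonormal base of $\mathcal{H}$, so $\mathfrak{X}$ is a Riesz base. For the converse, if $\mathfrak{X}$ is a Riesz base with witness $V\in\mathcal{L}(\mathcal{H})$, the map $W:l^2\to\mathcal{H}$ given by $W(e_{n})=V(x_{n})$ is unitary and $U:=V^{-1}W$ is bounded invertible with $U(e_{n})=x_{n}$; a direct matrix-entry computation yields $\langle U^{*}U\,e_{j},\,e_{i}\rangle_{l^2}=\langle x_{j},x_{i}\rangle_{\mathcal{H}}=\Gamma_{\mathcal{H}}(\mathfrak{X})_{ij}$, so the Gram matrix coincides with $U^{*}U$ and is therefore bounded invertible on $l^2$.

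The main obstacle is the sufficiency direction, and specifically the upgrade from the formal identity on $c_{00}$ to a bounded invertible operator on all of $l^2$: one must simultaneously use the upper bound on $\Gamma_{\mathcal{H}}(\mathfrak{X})$ to extend $U$ by continuity, the lower bound to obtain closed range, and the totality hypothesis on $\mathfrak{X}$ to promote ``bounded below'' to ``surjective''. Each of these three ingredients is essential, which explains why totality appears as a hypothesis even though the spectral condition on $\Gamma_{\mathcal{H}}(\mathfrak{X})$ alone already looks very strong.
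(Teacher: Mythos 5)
Your argument is correct. Note that the paper does not prove this statement at all: it is quoted as a classical result of Bari, with references to \cite{B51} and \cite{Nik}, so there is no in-paper proof to compare against. Your synthesis-operator argument is the standard one and is complete relative to the paper's definition of a Riesz base (existence of a bounded invertible $V$ carrying $\{x_n\}$ to an orthonormal base): the identity $\|U(a)\|^2=\langle \Gamma_{\mathcal{H}}(\mathfrak{X})a,a\rangle$ on finitely supported sequences, together with positivity and self-adjointness of the Gram matrix, correctly converts bounded invertibility of $\Gamma_{\mathcal{H}}(\mathfrak{X})$ into two-sided norm bounds for $U$, and you correctly use totality plus closedness of the range to get surjectivity; the converse via $\Gamma_{\mathcal{H}}(\mathfrak{X})=U^*U$ is likewise sound. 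It is also worth observing that your factorization $\Gamma_{\mathcal{H}}(\mathfrak{F}_\beta)=(D_\beta^{-1}X^*_{\mathfrak{F}}D_\beta)(D_\beta X_{\mathfrak{F}}D_\beta^{-1})$ viewpoint is exactly how the paper later uses the theorem (Lemma \ref{RBari}), where $D_\beta X_{\mathfrak{F}}D_\beta^{-1}$ plays the role of your $U$ (up to the isometry $D_\beta$), so your proof is well aligned with how the result is deployed downstream.
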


Inspired by Olevskii's idea in \cite{Ole}, we will introduce the notion of transformation operator of a sequence in $H^2_{\beta}$ to characterize Riesz bases.

Let $\mathfrak{F}=\{f_n\}_{n=0}^{\infty}$ be a sequence of vectors in a weighted Hardy space $H^2_{\beta}$. Then $\mathfrak{F}$ induces a linear operator $X_{\mathfrak{F}}$ on $H^2_{\beta}$, defined by
\[
X_{\mathfrak{F}}(z^n)=f_n(z), \ \ \ \text{for} \ n=0,1,\ldots.
\]
Furthermore, the linear operator $X_{\mathfrak{F}}$ has a matrix representation under the orthogonal base $\{z^n\}_{n=0}^{\infty}$ as follows
\[
X_{\mathfrak{F}}=\begin{bmatrix}
\widehat{f_0}(0) & \widehat{f_1}(0) & \widehat{f_2}(0)  & \cdots & \widehat{f_k}(0) & \cdots \\
\widehat{f_0}(1)   & \widehat{f_1}(1) & \widehat{f_2}(1)  & \cdots & \widehat{f_k}(1) & \cdots \\
\widehat{f_0}(2)  & \widehat{f_1}(2)   & \widehat{f_2}(2)  & \cdots  & \widehat{f_k}(2) & \cdots \\
\vdots   & \vdots & \vdots & \ddots &\vdots  &\vdots \\
\widehat{f_0}(k)   & \widehat{f_1}(k) & \widehat{f_2}(k) & \cdots & \widehat{f_k}(k) & \cdots \\
\vdots   & \vdots & \vdots&\vdots  &\vdots & \ddots
\end{bmatrix}
\]
We say that the operator $X_{\mathfrak{F}}$ is the transformation operator of the sequence $\mathfrak{F}$. In particular, if $\mathfrak{F}=\{g^n\}_{n=0}^{\infty}$ for some $g\in H^{\infty}_{\beta}$ with $g(\mathbb{D})\subseteq\mathbb{D}$, then the transformation operator $X_{\mathfrak{F}}$ is just the composition operator $C_g$.
Notice that the transformation operator $X_{\mathfrak{F}}$ may be not bounded on $H^2_{\beta}$ in general.

Define the operator $D_{\beta}: H^2_{\beta}\rightarrow H^2$ by
\[
D_{\beta}(z^k)=\beta_kz^k  \ \ \ \text{for} \ n=0,1,\ldots.
\]
As well known, $D_{\beta}$ is an isometry isomorphism. Moreover, $D_{\beta}$ and $D^{-1}_{\beta}$ has matrix representations under the orthogonal base $\{z^n\}_{n=0}^{\infty}$ as follows
\[
D_{\beta}=\begin{bmatrix}
\beta_0 & 0  & \cdots & 0 & \cdots \\
0   & \beta_1   & \cdots & 0 & \cdots \\
\vdots   & \vdots  & \ddots &\vdots  &\vdots \\
0   & 0  & \cdots &\beta_k & \cdots \\
\vdots   & \vdots &\vdots  &\vdots & \ddots
\end{bmatrix}, \ \
D^{-1}_{\beta}=D_{\beta^{-1}}=\begin{bmatrix}
\frac{1}{\beta_0} & 0   & \cdots & 0 & \cdots \\
0   & \frac{1}{\beta_1}  & \cdots & 0 & \cdots \\
\vdots   & \vdots  & \ddots &\vdots  &\vdots \\
0   & 0  & \cdots &\frac{1}{\beta_k} & \cdots \\
\vdots   & \vdots &\vdots  &\vdots & \ddots
\end{bmatrix}.
\]
Furthermore, $D_{\beta}X_{\mathfrak{F}}D^{-1}_{\beta}$ is an operator on the classical Hardy space $H^2$.

For a sequence $\mathfrak{F}=\{f_n\}_{n=0}^{\infty}$ in $H^2_{\beta}$, we always denote $\mathfrak{F}_{\beta}=\{\frac{f_n}{\beta_n}\}_{n=0}^{\infty}$. Now, we could describe the boundedness of the Gram matrix of $\mathfrak{F}_{\beta}$ on $H^2_{\beta}$ by the transformation operator $X_{\mathfrak{F}}$. Since
\begin{align*}
\Gamma_{H^2_{\beta}}(\mathfrak{F}_{\beta})&=\langle  \mathfrak{F}_{\beta}, \mathfrak{F}_{\beta}\rangle_{H^2_{\beta}}  \\
&=D^{-1}_{\beta}X^*_{\mathfrak{F}} D_{\beta}^2 X_{\mathfrak{F}}D^{-1}_{\beta} \\
&=(D^{-1}_{\beta}X^*_{\mathfrak{F}} D_{\beta})(D_{\beta} X_{\mathfrak{F}}D^{-1}_{\beta}).
\end{align*}
It is easy to see the following statements are equivalent.
\begin{enumerate}
 \item \ $\Gamma_{H^2_{\beta}}(\mathfrak{F}_{\beta})$ is a bounded operator from $H^2$ to $H^2$.
 \item \ $D_{\beta}X_{\mathfrak{F}}D^{-1}_{\beta}$ is a bounded operator from $H^2$ to $H^2$.
 \item \ $X_{\mathfrak{F}}$ is a bounded operator from $H^2_{\beta}$ to $H^2_{\beta}$.
\end{enumerate}
Similarly, the following statements are also equivalent.
\begin{enumerate}
 \item \ $\Gamma_{H^2_{\beta}}(\mathfrak{F}_{\beta})$ is an invertible operator on $H^2$.
 \item \ $D_{\beta}X_{\mathfrak{F}}D^{-1}_{\beta}$ is a lower bounded operator on $H^2$, i.e., there exists a constant $K>0$ such that for any $f\in H^2$,
     \[
     \|D_{\beta}X_{\mathfrak{F}}D^{-1}_{\beta}(f)\|_{H^2}\geq K\|f\|_{H^2}.
     \]
 \item \ $X_{\mathfrak{F}}$ is a lower bounded operator on $H^2_{\beta}$,  i.e., there exists a constant $K'>0$ such that for any $f\in H^2_{\beta}$,
     \[
     \|X_{\mathfrak{F}}(f)\|_{H^2_{\beta}}\geq K'\|f\|_{H^2_{\beta}}.
     \]
\end{enumerate}
Then, we could rewrite the Bari's Theorem (Theorem \ref{Bari}) as follows.
\begin{lemma}\label{RBari}
Let $\mathfrak{F}=\{f_n\}_{n=0}^{\infty}$  be a total sequence in $H^2_{\beta}$. Then $\mathfrak{F}_{\beta}$ is a Riesz base of $H^2_{\beta}$ if and only if $D_{\beta}X_{\mathfrak{F}}D^{-1}_{\beta}$ is a bounded and lower bounded operator on $H^2$.
\end{lemma}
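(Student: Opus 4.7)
The plan is to reduce the statement to Bari's Theorem (Theorem \ref{Bari}) by combining it with the two chains of equivalences displayed immediately above the lemma. Since $\mathfrak{F}$ is total in $H^2_\beta$ and each $\beta_n$ is a nonzero scalar, the rescaled sequence $\mathfrak{F}_\beta = \{f_n/\beta_n\}_{n=0}^\infty$ spans the same dense linear subspace and is therefore also total. Bari's theorem then asserts that $\mathfrak{F}_\beta$ is a Riesz base of $H^2_\beta$ if and only if its Gram matrix $\Gamma_{H^2_\beta}(\mathfrak{F}_\beta)$ represents a bounded invertible linear operator on $l^2$, which we identify with $H^2$ via the Fourier coefficient isometry.

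The next step is to translate ``bounded and invertible'' into the desired condition on $A := D_\beta X_\mathfrak{F} D_\beta^{-1}$. The factorization $\Gamma_{H^2_\beta}(\mathfrak{F}_\beta) = A^*A$ established just above the lemma (using that $D_\beta$ is self-adjoint, being diagonal with positive entries) immediately gives $\|A^*A\| = \|A\|^2$, so boundedness of $\Gamma_{H^2_\beta}(\mathfrak{F}_\beta)$ on $H^2$ is equivalent to boundedness of $A$ on $H^2$. For the invertibility part, I would argue by a standard positive-operator computation: if $A$ is lower bounded with constant $\sqrt{c}>0$, then
$$
\langle A^*A x, x\rangle = \|Ax\|^2 \geq c\|x\|^2,
$$
whence $A^*A \geq cI$ and $A^*A$ is bounded invertible; conversely, invertibility of the positive self-adjoint operator $A^*A$ together with the spectral theorem yields $A^*A \geq cI$ for some $c>0$, and the same identity then gives $\|Ax\| \geq \sqrt{c}\|x\|$, so $A$ is lower bounded.

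Assembling these pieces recovers the lemma. I do not anticipate any real obstacle: the statement is essentially a repackaging of Bari's theorem through the transformation operator formalism. The one piece of bookkeeping worth being careful with is verifying the factorization $\Gamma_{H^2_\beta}(\mathfrak{F}_\beta) = A^*A$, which amounts to computing the $(i,j)$ entry $\langle f_j/\beta_j, f_i/\beta_i\rangle_{H^2_\beta}$ directly in terms of the matrix entries $\widehat{f_j}(k)$ of $X_\mathfrak{F}$ and matching it against the $(i,j)$ entry of $(D_\beta^{-1} X_\mathfrak{F}^* D_\beta)(D_\beta X_\mathfrak{F} D_\beta^{-1})$; this is routine but must be written out to confirm that $X_\mathfrak{F}^*$ is meant in the $H^2_\beta$-adjoint sense.
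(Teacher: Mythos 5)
Your argument is correct and is essentially the paper's own: the lemma is stated there as a direct rewriting of Bari's theorem via the factorization $\Gamma_{H^2_{\beta}}(\mathfrak{F}_{\beta})=(D_{\beta}X_{\mathfrak{F}}D^{-1}_{\beta})^{*}(D_{\beta}X_{\mathfrak{F}}D^{-1}_{\beta})$ together with the two equivalence chains displayed just before it, exactly as you propose. One small correction to your closing caveat: in that factorization $X_{\mathfrak{F}}^{*}$ is the plain $H^2$-adjoint (the conjugate transpose of the matrix), not the $H^2_{\beta}$-adjoint, as a direct computation of the $(i,j)$ entry $\frac{1}{\beta_i\beta_j}\sum_{k}\beta_k^2\,\overline{\widehat{f_i}(k)}\,\widehat{f_j}(k)$ confirms.
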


Then, we could match some discussion in $H^2_{\beta}$ with the ones in $H^2_{\beta^{-1}}$ as follows.

\begin{proposition}\label{CorA}
Let $H^2_{\beta}$ be the weighted Hardy space induced by a weight sequence $w=\{w_k\}_{k=1}^{\infty}$ with $w_k\rightarrow 1$. Let $\varphi_{z_0}(z)=\frac{z_0-z}{1-\overline{z_0}z}$, $z_0\in\mathbb{D}$, be a M\"{o}bius transformation on $\mathbb{D}$ and denote $\mathfrak{F}=\{\varphi_{z_0}^n\}_{n=0}^{\infty}$.
Then, the following are equivalent.
\begin{enumerate}
 \item \ $C_{\varphi_{z_0}}$ is a bounded operator on $H^2_{\beta}$.
 \item \ $C_{\varphi_{z_0}}$ is a bounded invertible operator on $H^2_{\beta}$.
 \item \ $\mathfrak{F}_{\beta}$ is a Riesz base of $H^2_{\beta}$.
 \item \ $\mathfrak{F}_{\beta^{-1}}$ is a Riesz base of $H^2_{\beta^{-1}}$.
 \item \ $C_{\varphi_{z_0}}$ is a bounded invertible operator on $H^2_{\beta^{-1}}$.
 \item \ $C_{\varphi_{z_0}}$ is a bounded operator on $H^2_{\beta^{-1}}$.
\end{enumerate}
\end{proposition}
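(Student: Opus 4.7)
The six conditions cluster into two natural groups—(1)(2)(3) concern $H^2_\beta$ and (4)(5)(6) concern $H^2_{\beta^{-1}}$—and my plan is to prove each group internally equivalent, then bridge them by a duality/Cowen argument. Since $\varphi_{z_0}\circ\varphi_{z_0}=z$, the identity $C_{\varphi_{z_0}}^2=I$ holds on the dense polynomial span of $H^2_\beta$, so if $C_{\varphi_{z_0}}$ is bounded it extends by continuity to equal its own inverse on $H^2_\beta$, giving (1)$\Leftrightarrow$(2); the same argument on $H^2_{\beta^{-1}}$ (whose weight sequence $1/w_k$ likewise tends to $1$) gives (5)$\Leftrightarrow$(6). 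For (2)$\Leftrightarrow$(3), note that the transformation operator $X_\mathfrak{F}$ for $\mathfrak{F}=\{\varphi_{z_0}^n\}$ is exactly $C_{\varphi_{z_0}}$; combining Lemma \ref{RBari} with the chain of equivalences preceding it, and using totality of $\mathfrak{F}$ from Proposition \ref{Schauder} to obtain dense range, the Riesz base property reduces to bounded invertibility of $C_{\varphi_{z_0}}$. Replacing $\beta$ by $\beta^{-1}$ gives (4)$\Leftrightarrow$(5). This reduces the whole proposition to the cross-link (1)$\Leftrightarrow$(6).

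For (1)$\Leftrightarrow$(6), set $T_1:=D_\beta C_{\varphi_{z_0}}D_\beta^{-1}$ and $T_2:=D_\beta^{-1}C_{\varphi_{z_0}}D_\beta$, which are operators on $H^2$; then (1) is equivalent to $T_1$ bounded on $H^2$ and (6) to $T_2$ bounded on $H^2$. The $H^2$-adjoint $T_1^*=D_\beta^{-1}C_{\varphi_{z_0}}^*D_\beta$ is bounded iff $T_1$ is. On $H^2$ one has $M_{1-\bar z_0 z}^*=I-z_0 S$ (with $S=M_z^*$ the backward shift), hence $M_{1/(1-\bar z_0 z)}^*=(I-z_0 S)^{-1}$. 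Cowen's formula on $H^2$ gives
$$C_{\varphi_{z_0}}^*=M_{1/(1-\bar z_0 z)}^*\,C_{\varphi_{z_0}}\,M_{1-\bar z_0 z}=(I-z_0 S)^{-1}\,C_{\varphi_{z_0}}\,(I-\bar z_0 M_z).$$
Inserting $D_\beta D_\beta^{-1}$ between factors yields
$$T_1^*=(I-z_0\tilde S)^{-1}\,T_2\,(I-\bar z_0\tilde S^{+}),$$
where $\tilde S=D_\beta^{-1}SD_\beta$ is the weighted backward shift $z^n\mapsto w_n z^{n-1}$ and $\tilde S^{+}=D_\beta^{-1}M_zD_\beta$ is the weighted forward shift $z^n\mapsto (1/w_{n+1})z^{n+1}$. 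Provided both bracketing operators are bounded invertible on $H^2$, this identity gives $T_1^*$ bounded $\Leftrightarrow$ $T_2$ bounded, completing (1)$\Leftrightarrow$(6).

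The main obstacle I expect is verifying that $(I-z_0\tilde S)^{\pm1}$ and $(I-\bar z_0\tilde S^{+})^{\pm1}$ lie in $\mathcal{L}(H^2)$. Boundedness of $\tilde S$ and $\tilde S^{+}$ follows from $w_n\to 1$, which gives $\sup_n w_n<\infty$ and $\sup_n 1/w_n<\infty$. For invertibility I compute the spectra: Ces\`aro averaging $\frac{1}{n}\sum_{k=1}^n\log w_k\to 0$ yields $(w_1\cdots w_n)^{1/n}\to 1$, and similarly for the forward-shift weights, so both shifts have spectral radius $1$; as unilateral weighted shifts (every $\lambda\in\mathbb{D}$ being an eigenvalue of $\tilde S$ with eigenvector $\sum_n (\lambda^n/\beta_n) e_n$), their spectra equal $\overline{\mathbb{D}}$. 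Since $z_0\in\mathbb{D}$ gives $1/z_0,\,1/\bar z_0\notin\overline{\mathbb{D}}$, both $I-z_0\tilde S$ and $I-\bar z_0\tilde S^{+}$ are invertible in $\mathcal{L}(H^2)$, closing the chain and finishing the proof.
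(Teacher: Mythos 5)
Your proposal is correct, and its first half coincides with the paper's: the equivalences $(1)\Leftrightarrow(2)\Leftrightarrow(3)$ and $(4)\Leftrightarrow(5)\Leftrightarrow(6)$ are obtained exactly as in the paper from $C_{\varphi_{z_0}}\circ C_{\varphi_{z_0}}={\bf I}$, totality of $\mathfrak{F}$ (Proposition \ref{Schauder}), and Lemma \ref{RBari}. Where you genuinely diverge is the cross-link. The paper proves $(3)\Rightarrow(4)$ by conjugating by $D_{\beta}$ the Gram identity coming from the orthonormality of $\{\frac{\sqrt{1-|z_0|^2}}{1-\overline{z_0}z}\varphi_{z_0}^n\}_{n\geq0}$ in $H^2$, namely $(D_{\beta}X^*_{\mathfrak{F}}D^{-1}_{\beta})(D_{\beta}M^*_{k}M_{k}D^{-1}_{\beta})(D_{\beta}X_{\mathfrak{F}}D^{-1}_{\beta})={\bf I}$ with $k=\frac{\sqrt{1-|z_0|^2}}{1-\overline{z_0}z}$, and then reads off bounded invertibility of $D_{\beta}X^*_{\mathfrak{F}}D^{-1}_{\beta}=(D^{-1}_{\beta}X_{\mathfrak{F}}D_{\beta})^*$; its auxiliary input is that $M_k$ is bounded invertible on both spaces because $k$ and $1/k$ lie in ${\rm Hol}(\overline{\mathbb{D}})$. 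You instead prove $(1)\Leftrightarrow(6)$ by conjugating Cowen's adjoint identity $C_{\varphi_{z_0}}^*=({\bf I}-z_0S)^{-1}C_{\varphi_{z_0}}({\bf I}-\overline{z_0}M_z)$ on $H^2$ --- which I verified on reproducing kernels, and which is a rearrangement of the same unitarity of $M_kC_{\varphi_{z_0}}$ on $H^2$ that underlies the paper's identity --- and your auxiliary input is instead that $({\bf I}-z_0\tilde S)$ and $({\bf I}-\overline{z_0}\tilde S^{+})$ are invertible because the weighted shifts $\tilde S,\tilde S^{+}$ have spectral radius at most $1$ when $w_k\rightarrow1$, so the Neumann series converges for $|z_0|<1$ (your Ces\`aro and eigenvalue remarks are superfluous; the elementary bound $\|\tilde S^n\|^{1/n}\rightarrow1$ from $w_k\rightarrow1$ is all that is needed). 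The two routes buy slightly different things: yours makes the $\beta\leftrightarrow\beta^{-1}$ symmetry transparent through the adjoint and needs only shift spectral radii, while the paper's stays entirely inside the Gram-matrix/Riesz-base formalism it built in Section 2 and is reused verbatim for Blaschke products later. The only points to tighten, which are equally implicit in the paper's own proof, are formal: the operator identities are between densely defined matrices, so they should be checked on polynomials (or entrywise) before norms are taken, and $C_{\varphi_{z_0}}^2={\bf I}$ requires first extending the bounded operator to the non-polynomial vectors $\varphi_{z_0}^n$ by continuity.
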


\begin{proof}
Firstly, since $C_{\varphi_{z_0}}=X_{\mathfrak{F}}$ and $C_{\varphi_{z_0}}\circ C_{\varphi_{z_0}}={\textbf I}$, we obtain $(1)$, $(2)$ and $(3)$ are equivalent. Similarly, $(4)$, $(5)$ and $(6)$ are equivalent. It suffices to prove $(3)\Leftrightarrow(4)$.

Now assume that $\mathfrak{F}_{\beta}$ is a Riesz base of $H^2_{\beta}$. By Proposition \ref{Btotal} or Proposition \ref{Schauder}, $\mathfrak{F}_{\beta}$ is a total sequence in $H^2_{\beta}$. Then, it follows from Lemma \ref{RBari} that $D_{\beta}X_{\mathfrak{F}}D^{-1}_{\beta}$ is a bounded invertible operator on $H^2$. Notice that $\{\frac{\sqrt{1-|z_0|^2}}{1-\overline{z_0}z}\varphi_{z_0}^n(z)\}_{n=0}^{\infty}$ is an orthonormal base in the classical Hardy space $H^2$. Then,
\begin{align*}
&(D_{\beta}X^*_{\mathfrak{F}}D^{-1}_{\beta})(D_{\beta}M^*_{\frac{\sqrt{1-|z_0|^2}}{1-\overline{z_0}z}}
M_{\frac{\sqrt{1-|z_0|^2}}{1-\overline{z_0}z}}D^{-1}_{\beta})(D_{\beta}X_{\mathfrak{F}}D^{-1}_{\beta}) \\
=& D_{\beta}X^*_{\mathfrak{F}}M^*_{\frac{\sqrt{1-|z_0|^2}}{1-\overline{z_0}z}}
M_{\frac{\sqrt{1-|z_0|^2}}{1-\overline{z_0}z}}X_{\mathfrak{F}}D^{-1}_{\beta} \\
=& D_{\beta}\langle\frac{\sqrt{1-|z_0|^2}}{1-\overline{z_0}z}{\mathfrak{F}}, \frac{\sqrt{1-|z_0|^2}}{1-\overline{z_0}z}\mathfrak{F}\rangle_{H^2}D^{-1}_{\beta} \\
=&{\textbf I}.
\end{align*}
Since
\[
D_{\beta}X_{\mathfrak{F}}D^{-1}_{\beta} \ \ \ \text{and} \ \ \ D_{\beta}M^*_{\frac{\sqrt{1-|z_0|^2}}{1-\overline{z_0}z}}M_{\frac{\sqrt{1-|z_0|^2}}{1-\overline{z_0}z}}D^{-1}_{\beta}
\]
are bounded invertible operators on $H^2$, we obtain that $D_{\beta}X^*_{\mathfrak{F}}D^{-1}_{\beta}$ is a bounded invertible operator on $H^2$ and so is
\[
D^{-1}_{\beta}X_{\mathfrak{F}}D_{\beta}=(D_{\beta}X^*_{\mathfrak{F}}D^{-1}_{\beta})^*.
\]
Applying Lemma \ref{RBari} again, we obtain $(3)\Rightarrow (4)$. In the same way, one can see $(4)\Rightarrow(3)$.
\end{proof}

Notice that $M_z\sim M_{\varphi_{z_0}}$ if and only if $C_{\varphi_{z_0}}$ is a bounded invertible operator. Then we obtain the following corollary immediately.
\begin{corollary}\label{SimCorA}
Let $H^2_{\beta}$ be the weighted Hardy space induced by a weight sequence $w=\{w_k\}_{k=1}^{\infty}$ with $w_k\rightarrow 1$. Let $\varphi_{z_0}(z)=\frac{z_0-z}{1-\overline{z_0}z}$, $z_0\in\mathbb{D}$, be a M\"{o}bius transformation on $\mathbb{D}$.
Then, $M_z\sim M_{\varphi_{z_0}}$ on $H^2_{\beta}$ if and only if $M_z\sim M_{\varphi_{z_0}}$ on $H^2_{\beta^{-1}}$.
\end{corollary}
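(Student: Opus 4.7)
The plan is essentially to chain together two facts that are already available in the excerpt, so the proof will be very short. First, I would invoke the observation made in the introduction (just before the statement of Corollary \ref{SimCorA}) that for $\varphi\in{\rm Aut}(\mathbb{D})$ one has $M_z\sim M_{\varphi}$ on $H^2_{\beta}$ if and only if $C_{\varphi}$ is a bounded invertible operator on $H^2_{\beta}$. The forward direction here comes from the intertwining identity $C_{\varphi_{z_0}}M_z=M_{\varphi_{z_0}}C_{\varphi_{z_0}}$, which is immediate from the definition of composition. Applying this observation on the space $H^2_{\beta}$ converts the hypothesis $M_z\sim M_{\varphi_{z_0}}$ on $H^2_{\beta}$ into statement (2) of Proposition \ref{CorA}, and applying it on $H^2_{\beta^{-1}}$ converts $M_z\sim M_{\varphi_{z_0}}$ on $H^2_{\beta^{-1}}$ into statement (5) of Proposition \ref{CorA}.

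Second, I would simply quote the equivalence of conditions (2) and (5) in Proposition \ref{CorA} to close the loop. Since Proposition \ref{CorA} establishes the full chain $(1)\Leftrightarrow(2)\Leftrightarrow(3)\Leftrightarrow(4)\Leftrightarrow(5)\Leftrightarrow(6)$, the desired equivalence drops out immediately: $M_z\sim M_{\varphi_{z_0}}$ on $H^2_{\beta}$ iff $C_{\varphi_{z_0}}$ is bounded invertible on $H^2_{\beta}$ iff $C_{\varphi_{z_0}}$ is bounded invertible on $H^2_{\beta^{-1}}$ iff $M_z\sim M_{\varphi_{z_0}}$ on $H^2_{\beta^{-1}}$.

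There is essentially no obstacle: the substantive content (the bridge from $H^2_{\beta}$ to $H^2_{\beta^{-1}}$ via the adjoint/conjugation identity involving $M_{\sqrt{1-|z_0|^2}/(1-\overline{z_0}z)}$ and its role as an orthonormal base generator for $H^2$) has already been absorbed into Proposition \ref{CorA}. The only thing to verify in the write-up is that the reader accepts the equivalence "$M_z\sim M_{\varphi_{z_0}}$ iff $C_{\varphi_{z_0}}$ is bounded invertible," which is why the authors flag it explicitly just before the statement. Thus the corollary is truly a one-line consequence, and the proposal is to present it as such rather than attempting any independent argument.
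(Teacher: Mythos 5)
Your proposal is correct and matches the paper's own argument exactly: the paper also notes that $M_z\sim M_{\varphi_{z_0}}$ if and only if $C_{\varphi_{z_0}}$ is bounded invertible, and then cites the equivalence of the relevant items in Proposition \ref{CorA} to conclude. Nothing further is needed.
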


Now let $B(z)=\prod^m_{j=1}\frac{z_{j}-z}{1-\overline{z_{j}}z}$ be a Blaschke product on $\mathbb{D}$ with $m$ distinct zero points. Denote $\mathfrak{F}=\{B^n\}_{n=0}^{\infty}$ and
\[
\widetilde{\mathfrak{F}_{\beta}}=\bigcup\limits_{j=1}^{m}\frac{\mathfrak{F}_{\beta}}{1-\overline{z_{j}}z}=\{\frac{1}{1-\overline{z_{1}}z}\frac{B^n}{\beta_n}, \cdots, \frac{1}{1-\overline{z_{m}}z}\frac{B^n}{\beta_n}; n=0,1,2,\ldots\}.
\]

\begin{proposition}\label{Btotilde}
Let $B(z)=\prod^m_{j=1}\frac{z_{j}-z}{1-\overline{z_{j}}z}$ be a Blaschke product on $\mathbb{D}$ with $m$ distinct zero points. Then, $\Gamma_{H^2_{\beta}}(\mathfrak{F}_{\beta})$ is a bounded operator on $H^2$ if and only if $\Gamma_{H^2_{\beta}}(\widetilde{\mathfrak{F}_{\beta}})$ is a bounded operator on $H^2$.
\end{proposition}

\begin{proof}
One can see
\begin{align*}
\Gamma_{H^2_{\beta}}(\widetilde{\mathfrak{F}_{\beta}})&=\langle  \widetilde{\mathfrak{F}_{\beta}}, \widetilde{\mathfrak{F}_{\beta}}\rangle_{H^2_{\beta}} \\
&=\begin{bmatrix}
\langle \frac{\mathfrak{F}_{\beta}}{1-\overline{z_{1}}z}, \frac{\mathfrak{F}_{\beta}}{1-\overline{z_{1}}z} \rangle_{H^2_{\beta}} & \langle \frac{\mathfrak{F}_{\beta}}{1-\overline{z_{1}}z}, \frac{\mathfrak{F}_{\beta}}{1-\overline{z_{2}}z} \rangle_{H^2_{\beta}}   & \cdots & \langle \frac{\mathfrak{F}_{\beta}}{1-\overline{z_{1}}z}, \frac{\mathfrak{F}_{\beta}}{1-\overline{z_{m}}z} \rangle_{H^2_{\beta}} \\
\langle \frac{\mathfrak{F}_{\beta}}{1-\overline{z_{2}}z}, \frac{\mathfrak{F}_{\beta}}{1-\overline{z_{1}}z} \rangle_{H^2_{\beta}}   & \langle \frac{\mathfrak{F}_{\beta}}{1-\overline{z_{2}}z}, \frac{\mathfrak{F}_{\beta}}{1-\overline{z_{2}}z} \rangle_{H^2_{\beta}}   & \cdots & \langle \frac{\mathfrak{F}_{\beta}}{1-\overline{z_{2}}z}, \frac{\mathfrak{F}_{\beta}}{1-\overline{z_{m}}z} \rangle_{H^2_{\beta}}  \\
\vdots   & \vdots & \ddots &\vdots \\
\langle \frac{\mathfrak{F}_{\beta}}{1-\overline{z_{m}}z}, \frac{\mathfrak{F}_{\beta}}{1-\overline{z_{1}}z} \rangle_{H^2_{\beta}}   & \langle \frac{\mathfrak{F}_{\beta}}{1-\overline{z_{m}}z}, \frac{\mathfrak{F}_{\beta}}{1-\overline{z_{2}}z} \rangle_{H^2_{\beta}}  & \cdots & \langle \frac{\mathfrak{F}_{\beta}}{1-\overline{z_{m}}z}, \frac{\mathfrak{F}_{\beta}}{1-\overline{z_{m}}z} \rangle_{H^2_{\beta}} \\
\end{bmatrix}.
\end{align*}
Notice that for any $i,j=1,2,\ldots,\ldots,m$
{\[
\langle \frac{\mathfrak{F}_{\beta}}{1-\overline{z_{i}}z}, \frac{\mathfrak{F}_{\beta}}{1-\overline{z_{j}}z} \rangle_{H^2_{\beta}}=
(D^{-1}_{\beta}X^*_{\mathfrak{F}}D_{\beta})(D^{-1}_{\beta}M_{\frac{1}{1-\overline{z_{j}}z}}^*D_{\beta})(D_{\beta}
M_{\frac{1}{1-\overline{z_{i}}z}}D^{-1}_{\beta})(D_{\beta}X_{\mathfrak{F}}D^{-1}_{\beta}).
\]}
By the boundedness of $D^{-1}_{\beta}M_{\frac{1}{1-\overline{z_{j}}z}}^*D_{\beta}$ and $D_{\beta}M_{\frac{1}{1-\overline{z_{i}}z}}D^{-1}_{\beta}$, $\langle \frac{\mathfrak{F}_{\beta}}{1-\overline{z_{i}}z}, \frac{\mathfrak{F}_{\beta}}{1-\overline{z_{j}}z} \rangle_{H^2_{\beta}}$ is bounded on $H^2$ if and only if $D_{\beta}X_{\mathfrak{F}}D^{-1}_{\beta}$ is bounded on $H^2$. Therefore, $\Gamma_{H^2_{\beta}}(\mathfrak{F}_{\beta})$ is a bounded operator on $H^2$ if and only if $\Gamma_{H^2_{\beta}}(\widetilde{\mathfrak{F}_{\beta}})$ is a bounded operator on $H^2$.
\end{proof}

At the end of this section, we extend the boundedness of some composition operators from one weighted Hardy space to a wider class of weighted Hardy spaces.

\begin{theorem}[\cite{C90}]\label{Controll}
Suppose that $H^2_{\beta}$ and $H^2_{\beta'}$ are weighted Hardy spaces and
\[
w_{k+1}=\frac{\beta_{k+1}}{\beta_k}\geq \frac{\beta'_{k+1}}{\beta'_k}=w'_{k+1}, \ \ \ \text{for} \ k=0,1,2,\ldots.
\]
Let $\psi(z)$ be an analytic function on $\mathbb{D}$ with $\psi(\mathbb{D})\subseteq \mathbb{D}$ and $\psi(0)=0$. If $C_{\psi}$ is bounded on $H^2_{\beta}$, then $C_{\psi}$ is bounded on $H^2_{\beta'}$ and $\|C_{\psi}\|_{H^2_{\beta}}\geq\|C_{\psi}\|_{H^2_{\beta'}}$.
\end{theorem}

\begin{remark}
In C. C. Cowen's proof \cite{C90}, measure method was not used, while Hadamard product played an important role. Moreover, one can immediately generalize the above result a little. Let $\mathfrak{F}$ be a sequence of functions in $\psi\in \textrm{Hol}(\overline{\mathbb{D}})$. Suppose that the matrix representation of $X_{\mathfrak{F}}$ under the orthogonal base $\{z^n\}_{n=0}^{\infty}$ is lower triangular. Then the above conclusion also holds if we replace $C_{\psi}$ by $X_{\mathfrak{F}}$.
\end{remark}

\begin{proposition}\label{CPB}
Let $H^2_{\beta}$ be the weighted Hardy space induced by a weight sequence $w=\{w_k\}_{k=1}^{\infty}$ with $w_k\rightarrow 1$. Let $\psi\in \textrm{Hol}(\overline{\mathbb{D}})$ such that $\psi(\mathbb{D})\subseteq \mathbb{D}$.
Denote $\widetilde{\beta}=\{\widetilde{\beta}_n\}_{n=0}^{\infty}$, where $\widetilde{\beta}_n=(n+1)\beta_n$. If $C_{\psi}$ is bounded on $H^2_{\beta}$, then $C_{\psi}$ is bounded on $H^2_{\widetilde{\beta}}$. Moreover, if $\psi'(z)$ has no zero point on $\partial\mathbb{D}$, the converse is also true.
\end{proposition}

\begin{proof}
Define $D_w, D:H^2_{\beta}\rightarrow H^2_{\beta}$ by, for any $f(z)\in H^2_{\beta}$
\[
D_wf(z)=\sum\limits_{k=0}^{\infty}w_{k+1}\widehat{f}(k)z^k  \ \ \text{and} \ \
Df(z)=\sum\limits_{k=0}^{\infty}\frac{k+2}{k+1}\cdot \widehat{f}(k)z^k .
\]
We could write the two operators $D_w$ and $D$ in matrix form under the orthogonal base $\{z^k\}_{k=0}^{\infty}$ as follows,
\[
D_w=\begin{bmatrix}
w_1 & 0 & 0  & \cdots & 0 & \cdots \\
0   & w_2 & 0  & \cdots & 0 & \cdots \\
0   & 0   & w_3  & \cdots  & 0 & \cdots \\
\vdots   & \vdots & \vdots & \ddots &\vdots  &\vdots \\
0   & 0 & 0 & \cdots &w_k & \cdots \\
\vdots   & \vdots & \vdots&\vdots  &\vdots & \ddots
\end{bmatrix},    \
D=\begin{bmatrix}
2 & 0 & 0  & \cdots & 0 & \cdots \\
0   & \frac{3}{2} & 0  & \cdots & 0 & \cdots \\
0   & 0   & \frac{4}{3}  & \cdots  & 0 & \cdots \\
\vdots   & \vdots & \vdots & \ddots &\vdots  &\vdots \\
0   & 0 & 0 & \cdots &\frac{k+1}{k} & \cdots \\
\vdots   & \vdots & \vdots&\vdots  &\vdots & \ddots
\end{bmatrix}.
\]

Denote $\psi(0)=z_0\in\mathbb{D}$. We have
\begin{align*}
&\langle  \frac{{\psi}^{i+1}}{\widetilde{\beta_i}}, \frac{{\psi}^{j+1}}{\widetilde{\beta_j}}\rangle_{H^2_{\widetilde{\beta}}} \\
=& \ \frac{1}{\widetilde{\beta_i}\widetilde{\beta_j}}\cdot \sum\limits_{k=0}^{\infty} \overline{\widehat{{\psi}^{j+1}}(k)}\cdot \widehat{{\psi}^{i+1}}(k)\cdot \widetilde{\beta_k}^2 \\
=& \ \overline{\widehat{{\psi}^{j+1}}(0)} \widehat{{\psi}^{i+1}}(0)+\frac{1}{(i+1)(j+1){\beta}_i{\beta}_j}
\sum\limits_{k=1}^{\infty}(k+1) \overline{\widehat{{\psi}^{j+1}}(k)}(k+1)\widehat{{\psi}^{i+1}}(k){\beta}_k^2 \\
=& \ \overline{z_0}^{j+1}z_0^{i+1}+\frac{1}{(i+1)(j+1){\beta}_i{\beta}_j}
\sum\limits_{k=1}^{\infty}(\frac{k+1}{k}\cdot w_k)k\overline{\widehat{{\psi}^{j+1}}(k)} (\frac{k+1}{k}\cdot w_k)k\widehat{{\psi}^{i+1}}(k) {\beta}_{k-1}^2 \\
=& \ \overline{z_0}^{j+1}z_0^{i+1}+\frac{1}{(i+1)(j+1){\beta}_i{\beta}_j} \langle  DD_w(\psi^{i+1})', DD_w(\psi^{j+1})'\rangle_{H^2_{\beta}} \\
=& \ \overline{z_0}^{j+1}z_0^{i+1}+\frac{1}{{\beta}_i{\beta}_j} \langle  DD_wM_{\psi'}\psi^{i}, DD_wM_{\psi'}\psi^{j} \rangle_{H^2_{\beta}} \\
=& \ \overline{z_0}^{j+1}z_0^{i+1}+ \langle  DD_wM_{\psi'}(\frac{\psi^{i}}{\beta_i}), DD_wM_{\psi'}(\frac{\psi^{j}}{\beta_j}) \rangle_{H^2_{\beta}}.
\end{align*}
Then
\[
\Gamma_{H^2_{\widetilde{\beta}}}(\{\frac{\psi^{n+1}}{\widetilde{\beta}_n}\}_{n=0}^{\infty})=\Gamma_{H^2_{\beta}}(\{DD_wM_{\psi'}(\frac{\psi^{n}}{\beta_n})\}_{n=0}^{\infty})+
\begin{bmatrix}
\overline{z_0}\\
\overline{z_0}^{2}    \\
\overline{z_0}^{3}   \\
\vdots
\end{bmatrix}\begin{bmatrix}
{z_0} & {z_0}^{2} & {z_0}^{3}  & \cdots
\end{bmatrix}.
\]
Consequently,
\begin{align*}
&\|\Gamma_{H^2_{\beta}}(\{DD_wM_{\psi'}(\frac{\psi^{n}}{\beta_n})\}_{n=0}^{\infty})\|-\frac{|z_0|^2}{{1-|z_0|^2}} \\
\leq& \|\Gamma_{H^2_{\widetilde{\beta}}}(\{\frac{\psi^{n+1}}{\widetilde{\beta}_n}\}_{n=0}^{\infty})\| \\
\leq& \|\Gamma_{H^2_{\beta}}(\{DD_wM_{\psi'}(\frac{\psi^{n}}{\beta_n})\}_{n=0}^{\infty})\|+\frac{|z_0|^2}{{1-|z_0|^2}}.
\end{align*}
If $C_{\psi}$ is bounded on $H^2_{\beta}$, then $\Gamma_{H^2_{\beta}}(\{\frac{\psi^{n}}{\beta_n}\}_{n=0}^{\infty})$ is a bounded operator on $H^2$.
Since $\psi'(z)$ also belongs to $\textrm{Hol}(\overline{\mathbb{D}})$, $M_{\varphi'}$ is a bounded operator on $H^2_{\beta}$. Together with the boundedness of $D$ and $D_w$ on $H^2_{\beta}$, one can see that the boundedness of $\Gamma_{H^2_{\beta}}(\{\frac{\psi^{n}}{\beta_n}\}_{n=0}^{\infty})$ on $H^2$ implies the boundedness of
$\Gamma_{H^2_{\widetilde{\beta}}}(\{\frac{\psi^{n+1}}{\widetilde{\beta}_n}\}_{n=0}^{\infty})$ on $H^2$.

In addition, the boundedness of $\Gamma_{H^2_{\widetilde{\beta}}}(\{\frac{\psi^{n+1}}{\widetilde{\beta}_n}\}_{n=0}^{\infty})$ on $H^2$ is also equivalent to the boundedness of $\Gamma_{H^2_{\widetilde{\beta}}}(\{\frac{\psi^{n}}{\widetilde{\beta}_n}\}_{n=0}^{\infty})$ on $H^2$. Thus, if $C_{\psi}$ is bounded on $H^2_{\beta}$, then $C_{\psi}$ is bounded on $H^2_{\widetilde{\beta}}$.

Moreover, if $\psi'(z)$ has no zero point on $\partial\mathbb{D}$, then $M_{\varphi'}$ is lower bounded on $H^2_{\beta}$. Together with the lower boundedness of $D$ and $D_w$ on $H^2_{\beta}$, one can see that the boundedness of $\Gamma_{H^2_{\widetilde{\beta}}}(\{\frac{\psi^{n+1}}{\widetilde{\beta}_n}\}_{n=0}^{\infty})$ on $H^2$ implies the boundedness of $\Gamma_{H^2_{\beta}}(\{\frac{\psi^{n}}{\beta_n}\}_{n=0}^{\infty})$ on $H^2$. Consequently, if $C_{\psi}$ is bounded on $H^2_{\widetilde{\beta}}$, then $C_{\psi}$ is also bounded on $H^2_{\beta}$.
\end{proof}

Together with Theorem \ref{Controll} and Proposition \ref{CPB}, we can obtain the boundedness of the composition operator induced by arbitrary finite Blaschke product $B(z)$ with $B(0)=0$.

\begin{corollary}\label{boundcontr}
Let $H^2_{\beta}$ be the weighted Hardy space of polynomial growth induced by a weight sequence $w=\{w_k\}_{k=1}^{\infty}$.
Then, for any finite Blaschke product $B(z)$ with $B(0)=0$, $C_{B}$ is bounded on $H^2_{\beta}$.
\end{corollary}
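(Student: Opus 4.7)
The plan is to bootstrap the boundedness of $C_B$ from the classical Hardy space $H^2$ up to an arbitrary weighted Hardy space of polynomial growth, using Proposition \ref{CPB} to climb up the polynomial scale and Theorem \ref{Controll} to descend to the prescribed weight. The base case is $H^2$ itself, i.e., $H^2_\beta$ with $\beta_n\equiv 1$: since $B$ is a finite Blaschke product, $B\in {\rm Hol}(\overline{\mathbb{D}})$ with $B(\mathbb{D})\subseteq\mathbb{D}$, and $B(0)=0$ by hypothesis, so Littlewood's subordination theorem gives $\|C_B\|_{H^2}\le 1$.

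Next, I iterate Proposition \ref{CPB}. Starting from $\beta_n\equiv 1$, one application yields boundedness of $C_B$ on $H^2_{\widetilde\beta^{(1)}}$ with $\widetilde\beta^{(1)}_n=n+1$; a second application gives boundedness on $H^2_{\widetilde\beta^{(2)}}$ with $\widetilde\beta^{(2)}_n=(n+1)^2$; and after $M$ applications, on $H^2_{\widetilde\beta^{(M)}}$ with $\widetilde\beta^{(M)}_n=(n+1)^M$, for every positive integer $M$. Each step is valid because the hypotheses on $\psi=B$ (namely $B\in {\rm Hol}(\overline{\mathbb{D}})$ and $B(\mathbb{D})\subseteq\mathbb{D}$) are intrinsic to $B$, and each intermediate weight $w^{(j)}_k=\bigl((k+1)/k\bigr)^j$ still satisfies $w^{(j)}_k\to 1$.

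Finally, given $H^2_\beta$ of polynomial growth, pick $M''\ge 0$ with $(k+1)|w_k-1|\le M''$ for all $k$; then $w_k\le 1+M''/(k+1)\le 1+M''/k$ for $k\ge 1$. Choose an integer $M\ge M''$. Bernoulli's inequality gives
$$
\widetilde w^{(M)}_k=\left(1+\frac{1}{k}\right)^M\ge 1+\frac{M}{k}\ge 1+\frac{M''}{k}\ge w_k
$$
for every $k\ge 1$. Since $B(0)=0$, Theorem \ref{Controll} with dominating weight $\widetilde\beta^{(M)}$ and dominated weight $\beta$ transfers the boundedness of $C_B$ from $H^2_{\widetilde\beta^{(M)}}$ to $H^2_\beta$. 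The only nontrivial point is reconciling the polynomial-growth hypothesis (a supremum involving $(k+1)|w_k-1|$) with the pointwise weight domination demanded by Theorem \ref{Controll}; Bernoulli's inequality together with the freedom to pick $M$ sufficiently large closes this gap.
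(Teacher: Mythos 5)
Your proof is correct and follows essentially the same route as the paper: bootstrap from Littlewood's subordination theorem on $H^2$, apply Proposition \ref{CPB} $M$ times to reach the weight $(n+1)^M$, then descend to $H^2_\beta$ via Theorem \ref{Controll}. The only (cosmetic) difference is in the final domination step: the paper passes through the equivalent space with weights $\widetilde w_k=\frac{k+M+1}{k+1}$ and invokes the characterization of polynomial growth from its Remark, whereas you compare $\bigl(\frac{k+1}{k}\bigr)^M$ with $w_k$ directly via Bernoulli's inequality, which works just as well.
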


\begin{proof}
Assume $\sup_k\{(k+1)|w_k-1|\}\leq M\in\mathbb{N}$. Let $H^2_{\widetilde{\beta}}$ be the weighted Hardy space induced by the weight sequence $\widetilde{w}=\{\widetilde{w_k}\}_{k=1}^{\infty}$, where $\widetilde{w_k}=\frac{k+M+1}{k+1}$. Let $H^2_{\widetilde{\beta}'}$ be the weighted Hardy space with $\widetilde{\beta_k}'=(k+1)^M$. Since
\[
\lim\limits_{k\rightarrow\infty}\frac{\widetilde{\beta_k}}{\widetilde{\beta_k}'}=\lim\limits_{k\rightarrow\infty}\frac{\prod_{j=0}^{k}\frac{j+M+1}{j+1}}{(k+1)^M}
=\lim\limits_{k\rightarrow\infty}\frac{\prod_{m=0}^{M}\frac{k+m+1}{m+1}}{(k+1)^M}=\prod\limits_{m=0}^{M}\frac{1}{m+1},
\]
$H^2_{\widetilde{\beta}}$ and $H^2_{\widetilde{\beta}'}$ are two equivalent weighted Hardy spaces.

As well known, $C_{B}$ is a bounded operator on the classical Hardy space $H^2$. Notice that $B'(z)$ has no zero point on $\partial\mathbb{D}$ (Theorem 2.1 in \cite{Car}), see also \cite{Fri}). Applying Proposition \ref{CPB} $M$ times, one can obtain that $C_{B}$ is bounded on $H^2_{\widetilde{\beta}'}$, and consequently $C_{B}$ is bounded on $H^2_{\widetilde{\beta}}$. Then, it follows from Theorem \ref{Controll} that $C_{B}$ is bounded on $H^2_{\beta}$.
\end{proof}

\section{Similar representation of analytic automorphisms and finite Blaschke products}

In this section, we discuss the similarity of the representation of analytic automorphisms and finite Blaschke products on a weighted Hardy space of polynomial growth.

\subsection{Similar representation on weighted Hardy spaces of polynomial growth} 

Following from the preliminaries in the previous section, we could give a proof of Theorem \ref{Mz} now.

{\bf Proof of Theorem \ref{Mz}.}
Without loss of generality, we may assume  $\varphi(z)=\frac{z_0-z}{1-\overline{z_0}z}$, $z_0\in\mathbb{D}\setminus\{0\}$. Let $B(z)=z\varphi(z)$. Denote $\mathfrak{F}=\{B^n\}_{n=0}^{\infty}$, $\mathfrak{F}_{\beta}=\{\frac{B^n}{\beta_n}\}_{n=0}^{\infty}$,
{\[
\widetilde{\mathfrak{F}}_1=\{B^n(z), zB^n(z); n=0,1,\ldots\} \ \ \ \text{and} \ \ \ \widetilde{\mathfrak{F}}_2=\{B^n(z), \varphi(z)B^n(z); n=0,1,\ldots\}.
\]}
It follows from Corollary \ref{twoBtotal} that $\widetilde{\mathfrak{F}}_1$ and $\widetilde{\mathfrak{F}}_2$ are total and finitely linear independent in $H^2_{\beta}$, respectively.

By Corollary \ref{boundcontr}, $C_{B}$ is bounded on $H^2_{\beta}$ and $H^2_{\beta^{-1}}$. Consequently, $D_{\beta}X_{\mathfrak{F}}D^{-1}_{\beta}$ and $D^{-1}_{\beta}X_{\mathfrak{F}}D_{\beta}$ are bounded on $H^2$.
Denote {
\[
\widetilde{\mathfrak{F}}_{1, \beta}=\{\frac{B^n(z)}{\beta_n}, \frac{zB^n(z)}{\beta_n}; n=0,1,\ldots\}, \ \ \    \widetilde{\mathfrak{F}}_{2, \beta}=\{\frac{B^n(z)}{\beta_n}, \frac{\varphi(z)B^n(z)}{\beta_n}; n=0,1,\ldots\}.
\]}
Then,
\[
\Gamma_{H^2_{\beta}}(\widetilde{\mathfrak{F}}_{1, \beta})=\langle  \widetilde{\mathfrak{F}}_{1, \beta}, \widetilde{\mathfrak{F}}_{1, \beta}\rangle_{H^2_{\beta}}=\begin{bmatrix}
\langle\mathfrak{F}_{\beta}, \mathfrak{F}_{\beta}\rangle_{H^2_{\beta}} &  \langle\mathfrak{F}_{\beta}, M_z\mathfrak{F}_{\beta}\rangle_{H^2_{\beta}}  \\
\langle M_z\mathfrak{F}_{\beta}, \mathfrak{F}_{\beta}\rangle_{H^2_{\beta}}   & \langle M_z\mathfrak{F}_{\beta}, M_z\mathfrak{F}_{\beta}\rangle_{H^2_{\beta}} \\
\end{bmatrix}
\]
where
\begin{align*}
&\langle\mathfrak{F}_{\beta}, \mathfrak{F}_{\beta}\rangle_{H^2_{\beta}}=(D^{-1}_{\beta}X^*_{\mathfrak{F}}D_{\beta})(D_{\beta}X_{\mathfrak{F}}D^{-1}_{\beta}), \\
&\langle\mathfrak{F}_{\beta}, M_z\mathfrak{F}_{\beta}\rangle_{H^2_{\beta}}=(D^{-1}_{\beta}X^*_{\mathfrak{F}}D_{\beta})(D^{-1}_{\beta}M_z^*D_{\beta})(D_{\beta}X_{\mathfrak{F}}D^{-1}_{\beta}),  \\
&\langle M_z\mathfrak{F}_{\beta}, \mathfrak{F}_{\beta}\rangle_{H^2_{\beta}}=(D^{-1}_{\beta}X^*_{\mathfrak{F}}D_{\beta})(D_{\beta}M_zD^{-1}_{\beta})(D_{\beta}X_{\mathfrak{F}}D^{-1}_{\beta}),   \\
&\langle M_z\mathfrak{F}_{\beta}, M_z\mathfrak{F}_{\beta}\rangle_{H^2_{\beta}}=
(D^{-1}_{\beta}X^*_{\mathfrak{F}}D_{\beta})(D^{-1}_{\beta}M_z^*D_{\beta})(D_{\beta}M_zD^{-1}_{\beta})(D_{\beta}X_{\mathfrak{F}}D^{-1}_{\beta}).
\end{align*}
Consequently, by the boundedness of $D_{\beta}X_{\mathfrak{F}}D^{-1}_{\beta}$ and $D_{\beta}M_zD^{-1}_{\beta}$ on $H^2$, $\Gamma_{H^2_{\beta}}(\widetilde{\mathfrak{F}}_{1, \beta})$ is bounded on $H^2$ and so is $D_{\beta}X_{\widetilde{\mathfrak{F}}_1}D^{-1}_{\beta}$. Similarly, we also obtain that $D^{-1}_{\beta}X_{\widetilde{\mathfrak{F}}_1}D_{\beta}$ is bounded on $H^2$.

By direct calculation, it is not difficult to see that

\[
\langle \frac{\sqrt{1-|z_0|^2}}{1-\overline{z_0}z}B^i(z), \frac{\sqrt{1-|z_0|^2}}{1-\overline{z_0}z}B^j(z)\rangle_{H^2}=\left\{\begin{array}{cc}
1, \ \ \ &\text{if} \ i=j \\
0, \ \ \ &\text{if} \ i\neq j
\end{array}\right.
\]
\[
\langle \frac{\sqrt{1-|z_0|^2}}{1-\overline{z_0}z}zB^i(z), \frac{\sqrt{1-|z_0|^2}}{1-\overline{z_0}z}zB^j(z)\rangle_{H^2} \\
=\left\{\begin{array}{cc}
1, \ \ \ &\text{if} \ i=j \\
0, \ \ \ &\text{if} \ i\neq j
\end{array}\right.
\]
and
\[
\langle \frac{\sqrt{1-|z_0|^2}}{1-\overline{z_0}z}zB^i(z), \frac{\sqrt{1-|z_0|^2}}{1-\overline{z_0}z}B^j(z)\rangle_{H^2} \\
=\left\{\begin{array}{cc}
z_0, \ \ \ &\text{if} \ i=j \\
0, \ \ \ &\text{if} \ i\neq j
\end{array}\right..
\]
Then, we have
\begin{align*}
& D_{\beta}X^*_{\widetilde{\mathfrak{F}}_1}M^*_{\frac{\sqrt{1-|z_0|^2}}{1-\overline{z_0}z}}M_{\frac{\sqrt{1-|z_0|^2}}{1-\overline{z_0}z}}X_{\widetilde{\mathfrak{F}}_1}D^{-1}_{\beta} \\
=& D_{\beta}\langle\frac{\sqrt{1-|z_0|^2}}{1-\overline{z_0}z}{\widetilde{\mathfrak{F}}_1}, \frac{\sqrt{1-|z_0|^2}}{1-\overline{z_0}z}{\widetilde{\mathfrak{F}}_1}\rangle_{H^2}D^{-1}_{\beta} \\
=& D_{\beta}\begin{bmatrix}
1 & \overline{z_0} & 0   & 0 & \cdots \\
z_0   & 1 & 0   & 0 & \cdots \\
0   & 0   & 1    & \overline{z_0} & \cdots \\
0   & 0 & z_0   & 1 & \cdots \\
\vdots   & \vdots &\vdots  &\vdots & \ddots
\end{bmatrix}D^{-1}_{\beta} \\
=& \begin{bmatrix}
1 & w_1^{-1}\overline{z_0} & 0   & 0 & \cdots \\
w_1z_0   & 1 & 0   & 0 & \cdots \\
0   & 0   & 1    & w_3^{-1}\overline{z_0} & \cdots \\
0   & 0 & w_3z_0   & 1 & \cdots \\
\vdots   & \vdots &\vdots  &\vdots & \ddots
\end{bmatrix}.
\end{align*}
Consequently,
\[
D_{\beta}X^*_{\widetilde{\mathfrak{F}}_1}M^*_{\frac{\sqrt{1-|z_0|^2}}{1-\overline{z_0}z}}M_{\frac{\sqrt{1-|z_0|^2}}{1-\overline{z_0}z}}X_{\widetilde{\mathfrak{F}}_1}D^{-1}_{\beta}
\]
is lower bounded on $l^2$. In addition,
\begin{align*}
&D_{\beta}X^*_{\widetilde{\mathfrak{F}}_1}M^*_{\frac{\sqrt{1-|z_0|^2}}{1-\overline{z_0}z}}M_{\frac{\sqrt{1-|z_0|^2}}{1-\overline{z_0}z}}X_{\widetilde{\mathfrak{F}}_1}D^{-1}_{\beta}\\
=& (D_{\beta}X^*_{\widetilde{\mathfrak{F}}_1}D^{-1}_{\beta}) (D_{\beta}M^*_{\frac{\sqrt{1-|z_0|^2}}{1-\overline{z_0}z}}M_{\frac{\sqrt{1-|z_0|^2}}{1-\overline{z_0}z}}D^{-1}_{\beta})
 (D_{\beta}X_{\widetilde{\mathfrak{F}}_1}D^{-1}_{\beta}),
\end{align*}
one can see that both of the first item
\[
D_{\beta}X^*_{\widetilde{\mathfrak{F}}_1}D^{-1}_{\beta}=(D^{-1}_{\beta}X_{\widetilde{\mathfrak{F}}_1}D_{\beta})^*
\]
and the second item
\[
D_{\beta}M^*_{\frac{\sqrt{1-|z_0|^2}}{1-\overline{z_0}z}}M_{\frac{\sqrt{1-|z_0|^2}}{1-\overline{z_0}z}}D^{-1}_{\beta}
\]
are bounded on $H^2$. Then, $D_{\beta}X_{\widetilde{\mathfrak{F}}_1}D^{-1}_{\beta}$ is lower bounded on $H^2$. Therefore, $\widetilde{\mathfrak{F}}_{1,\beta}$ is a Riesz base of $H^2_{\beta}$.

In the same way, we could obtain that $D_{\beta}X_{\widetilde{\mathfrak{F}}_2}D^{-1}_{\beta}$ is bounded and lower bounded on $H^2$, after a computation
\begin{align*}
& (D_{\beta}X^*_{\widetilde{\mathfrak{F}}_2}D^{-1}_{\beta})(D_{\beta}X_{\widetilde{\mathfrak{F}}_2}D^{-1}_{\beta}) \\
=& D_{\beta}X^*_{\widetilde{\mathfrak{F}}_2}X_{\widetilde{\mathfrak{F}}_2}D^{-1}_{\beta} \\
=& \begin{bmatrix}
1 & w_1^{-1}\overline{z_0} & 0   & 0 & \cdots \\
w_1z_0   & 1 & 0   & 0 & \cdots \\
0   & 0   & 1    & w_3^{-1}\overline{z_0} & \cdots \\
0   & 0 & w_3z_0   & 1 & \cdots \\
\vdots   & \vdots &\vdots  &\vdots & \ddots
\end{bmatrix}.
\end{align*}
Therefore, $\widetilde{\mathfrak{F}}_{2,\beta}$ is also a Riesz base of $H^2_{\beta}$.

It follows from $B(z)=z\varphi(z)$ and $\varphi(\varphi(z))=z$ that for every $n=0,1,2,\ldots$,
\[
C_{\varphi}(B^n(z))=B^n(\varphi(z))=B^n(z)
\]
and
\[
C_{\varphi}(zB^n(z))=\varphi(z)B^n(\varphi(z))=\varphi(z)B^n(z).
\]
Then, the composition operator $C_{\varphi}$ is just a base transformation operator from the Riesz base $\widetilde{\mathfrak{F}}_{1,\beta}$ to the Riesz base $\widetilde{\mathfrak{F}}_{2,\beta}$. Thus, $C_{\varphi}$ is a bounded invertible operator on $H^2_{\beta}$. Furthermore, $M_z\sim M_{\varphi}$, i.e., $M_z$ is weakly homogeneous on $H^2_{\beta}$.

By the way, $C_\varphi$ is bounded on $H^2_{\beta}$ if and only if the sequence $\{\frac{\varphi^n}{\beta_n}\}_{n=0}^{\infty}$ is a Riesz base in $H^2_{\beta}$. In particular, the boundedness of $C_\varphi$ on $H^2_{\beta}$ implies the sequence $\{\frac{\varphi^n}{\beta_n}\}_{n=0}^{\infty}$ is bounded (or quasinormed) in $H^2_{\beta}$. \qed

According to the boundedness of the composition operators induced by M\"{o}bius transformations, C. C. Cowen's theorem (Theorem \ref{Controll} in the present paper) could be extended as the following result.
\begin{corollary}\label{ECowen}
Let $H^2_{\beta}$ and $H^2_{\beta'}$ be two weighted Hardy spaces. Suppose that $H^2_{\beta}$ is of polynomial growth and
\[
w_{k+1}=\frac{\beta_{k+1}}{\beta_k}\geq \frac{\beta'_{k+1}}{\beta'_k}=w'_{k+1}, \ \ \ \text{for} \ k=0,1,2,\ldots.
\]
Let $\psi(z)$ be an analytic function on $\mathbb{D}$ with $\psi(\mathbb{D})\subseteq \mathbb{D}$. If $C_{\psi}$ is bounded on $H^2_{\beta}$, then $C_{\psi}$ is bounded on $H^2_{\beta'}$.
\end{corollary}
\begin{proof}
Let $z_0=\psi(0)\in \mathbb{D}$, and let $\varphi(z)=\frac{z_0-z}{1-\overline{z_0}z}$. Obviously, the polynomial growth of $H^2_{\beta}$ implies the polynomial growth of $H^2_{\beta'}$. Then, by Theorem \ref{Mz}, the composition operator $C_\varphi$ is bounded on $H^2_{\beta}$ and $H^2_{\beta'}$, respectively. Consider the function $ \varphi\circ \psi$. Since $C_{\varphi\circ \psi}$ is bounded on $H^2_{\beta}$ and $\varphi\circ \psi(0)=\psi(z_0)=0$, by C. C. Cowen's result Theorem \ref{Controll}, one can see that $C_{\varphi\circ \psi}$ is also bounded on $H^2_{\beta}$. Furthermore, it follows from the boundedness of $C_\varphi$ on $H^2_{\beta'}$ that $C_{\psi}=C_{\varphi\circ \psi}\circ C_{\varphi}$ is bounded on $H^2_{\beta'}$.
\end{proof}

Furthermore, we could obtain the boundedness of composition operators induced by analytic functions on $\overline{\mathbb{D}}$ on weighted Hardy spaces of polynomial growth.

{\bf Proof of Theorem \ref{CompBHol}.}
Assume $\sup_k\{(k+1)|w_k-1|\}\leq M\in\mathbb{N}$. Let $H^2_{\widetilde{\beta}}$ be the weighted Hardy space induced by the weight sequence $\widetilde{w}=\{\widetilde{w_k}\}_{k=1}^{\infty}$, where $\widetilde{w_k}=\frac{k+M+1}{k+1}$.

As well know, the composition operator $C_{\psi}$ is bounded on the classical Hardy space $H^2$, in fact, $\|C_{\psi}\|_{H^2}\leq 1$. As the same as the proof of Corollary \ref{boundcontr}, one can obtain that the composition operator $C_{\psi}$ is also bounded on $H^2_{\widetilde{\beta}}$ by Proposition \ref{CPB}. Then, it follows from Corollary \ref{ECowen} that $C_{\psi}$ is bounded on $H^2_{\beta}$. \qed

Next, let us consider finite Blaschke products.
\begin{lemma}\label{RBB}
Suppose that $H^2_{\beta}$ is a weighted Hardy space of polynomial growth. Let $B(z)=\prod^m_{j=1}\frac{z_{j}-z}{1-\overline{z_{j}}z}$ be a Blaschke product on $\mathbb{D}$ with $m$ distinct zero points. Denote $\mathfrak{F}=\{B^n\}_{n=0}^{\infty}$, $\mathfrak{F}_{\beta}=\{\frac{B^n}{\beta}\}_{n=0}^{\infty}$,
\[
\widetilde{\mathfrak{F}}=\bigcup\limits_{j=1}^{m}\frac{\mathfrak{F}}{1-\overline{z_{j}}z}=\{\frac{B^n}{1-\overline{z_{1}}z}, \cdots, \frac{B^n}{1-\overline{z_{m}}z}; n=0,1,2,\ldots\}.
\]
and
\[
\widetilde{\mathfrak{F}_{\beta}}=\bigcup\limits_{j=1}^{m}\frac{\mathfrak{F}_{\beta}}{1-\overline{z_{j}}z}=\{\frac{1}{1-\overline{z_{1}}z}\frac{B^n}{\beta_n}, \cdots, \frac{1}{1-\overline{z_{m}}z}\frac{B^n}{\beta_n}; n=0,1,2,\ldots\}.
\]
Then $\widetilde{\mathfrak{F}_{\beta}}$ is a Riesz base of $H^2_{\beta}$.
\end{lemma}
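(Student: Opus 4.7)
The plan is to verify the hypotheses of Bari's theorem in the form of Lemma \ref{RBari}: totality, and both the boundedness and lower-boundedness of the transformation operator $D_\beta X_{\widetilde{\mathfrak{F}}}D_\beta^{-1}$ on $H^2$ (after fixing an enumeration of the doubly-indexed sequence $\widetilde{\mathfrak{F}}$, say by $p=nm+(j-1)$). Totality and finite linear independence of $\widetilde{\mathfrak{F}}$ are immediate from Theorem \ref{disBtotal}.

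For the upper bound, I would first observe that $B\in{\rm Hol}(\overline{\mathbb{D}})$ with $B(\mathbb{D})\subseteq\mathbb{D}$, so Theorem \ref{CompBHol} gives that $C_B=X_{\mathfrak{F}}$ is bounded on $H^2_\beta$; Proposition \ref{Btotilde} then upgrades this to boundedness of $\Gamma_{H^2_\beta}(\widetilde{\mathfrak{F}_\beta})$, which is equivalent to boundedness of $D_\beta X_{\widetilde{\mathfrak{F}}}D_\beta^{-1}$ on $H^2$.

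The core of the argument is the lower bound, which I extract by mimicking the sandwich computation at the end of the proof of Theorem \ref{Mz}, with the $2\times 2$ blocks there replaced by $m\times m$ blocks here. The starting point is a direct residue computation on $\partial\mathbb{D}$ (using $|B|=1$ on the circle and $B(z_l)=0$):
\[
\left\langle \frac{B^i(z)}{1-\overline{z_j}z},\ \frac{B^k(z)}{1-\overline{z_l}z}\right\rangle_{H^2}
=\begin{cases}\dfrac{1}{1-\overline{z_j}z_l},& i=k,\\[2pt] 0,& i\neq k,\end{cases}
\]
so that $X^*_{\widetilde{\mathfrak{F}}}X_{\widetilde{\mathfrak{F}}}=\langle\widetilde{\mathfrak{F}},\widetilde{\mathfrak{F}}\rangle_{H^2}$ is block-diagonal in the exponent $n$ of $B$, each $m\times m$ block being the invertible matrix of Lemma \ref{mmatrix}. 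Conjugating by $D_\beta$ as in Theorem \ref{Mz}, the $(nm+j-1,nm+l-1)$ entry is scaled by $\beta_{nm+j-1}/\beta_{nm+l-1}$, a product of at most $m-1$ consecutive weights, which tends uniformly to $1$ as $n\to\infty$ because $w_k\to 1$. Inter-block entries stay zero, so the block-diagonal structure is preserved, and each conjugated block remains bounded and invertible with uniform control in $n$; hence $D_\beta X^*_{\widetilde{\mathfrak{F}}}X_{\widetilde{\mathfrak{F}}}D_\beta^{-1}$ is bounded and invertible on $H^2$. Factoring this as $(D_\beta X^*_{\widetilde{\mathfrak{F}}}D_\beta^{-1})(D_\beta X_{\widetilde{\mathfrak{F}}}D_\beta^{-1})$ and noting that the first factor is bounded (by applying Theorem \ref{CompBHol} to $H^2_{\beta^{-1}}$, which also has polynomial growth), the desired lower bound on $D_\beta X_{\widetilde{\mathfrak{F}}}D_\beta^{-1}$ follows, and Lemma \ref{RBari} yields the Riesz base property.

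The main obstacle is the careful bookkeeping of the doubly-indexed enumeration of $\widetilde{\mathfrak{F}}$, in particular verifying that the $H^2$-orthogonality between distinct $n$-blocks is not destroyed by the $D_\beta$-conjugation; the polynomial growth hypothesis $\sup_k(k+1)|w_k-1|<\infty$, which in particular forces $w_k\to 1$, is precisely what provides the uniform control over the intra-block ratios $\beta_{nm+j-1}/\beta_{nm+l-1}$ needed to conclude that the blocks remain uniformly invertible.
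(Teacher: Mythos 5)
Your proposal is correct and follows essentially the same route as the paper: totality from Theorem \ref{disBtotal}, upper bound from the boundedness of $C_B$ on $H^2_{\beta}$ and $H^2_{\beta^{-1}}$ transferred to $X_{\widetilde{\mathfrak{F}}}$ via Proposition \ref{Btotilde}, and the lower bound from the block-diagonal Gram computation over $\partial\mathbb{D}$ with $m\times m$ blocks given by the invertible matrix of Lemma \ref{mmatrix}, conjugated by $D_{\beta}$ whose intra-block ratios tend to $1$. The only cosmetic difference is that the paper first normalizes to $z_1=0$ so it can invoke Corollary \ref{boundcontr}, whereas you invoke Theorem \ref{CompBHol} directly, which removes the need for that reduction.
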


\begin{proof}
First of all, we may assume $z_1=0$ without loss of generality, because $C_{\varphi_{z_1}}$ is a bounded invertible operator on $H^2_{\beta}$ by Theorem \ref{Mz}, and consequently $B(\varphi_{z_1}(z))$ is also a Blaschke product on $\mathbb{D}$ with $m$ distinct zero points and $B(0)=0$.

Following from Theorem \ref{disBtotal}, $\widetilde{\mathfrak{F}_{\beta}}$ is a total and finitely linear independent sequence in $H^2_{\beta}$. By Lemma \ref{RBari}, it suffices to prove $D_{\beta}X_{\mathfrak{F}}D^{-1}_{\beta}$ is a bounded and lower bounded operator on $H^2$.

According to Corollary \ref{boundcontr}, $C_B$ is bounded on $H^2_{\beta}$ and $H^2_{\beta^{-1}}$. Consequently, both $D_{\beta}X_{\mathfrak{F}}D^{-1}_{\beta}$ and $D^{-1}_{\beta}X_{\mathfrak{F}}D_{\beta}$ are bounded on $H^2$.

On the other hand, since
\[
\langle \frac{B^i(z)}{1-\overline{z_i}z}, \frac{B^j(z)}{1-\overline{z_j}z} \rangle_{H^2}=\left\{\begin{array}{cc}
\frac{1}{1-\overline{z_i}z_j}, \ \ \ &\text{if} \ i=j \\
0, \ \ \ &\text{if} \ i\neq j
\end{array}\right.,
\]
we have
\begin{align*}
& (D_{\beta}X^*_{\widetilde{\mathfrak{F}}}D^{-1}_{\beta}) (D_{\beta}X_{\widetilde{\mathfrak{F}}}D^{-1}_{\beta}) \\
=& \begin{bmatrix}
D_0   & 0 & 0 & \cdots \\
0   & D_1 & 0 & \cdots \\
0   & 0   & D_2  & \cdots \\
\vdots   & \vdots  &\vdots & \ddots
\end{bmatrix}
\begin{bmatrix}
A   & 0 & 0 & \cdots \\
0   & A & 0 & \cdots \\
0   & 0   & A  & \cdots \\
\vdots   & \vdots  &\vdots & \ddots
\end{bmatrix}
\begin{bmatrix}
D_0^{-1}   & 0 & 0 & \cdots \\
0   & D_1^{-1} & 0 & \cdots \\
0   & 0   & D_2^{-1}  & \cdots \\
\vdots   & \vdots  &\vdots & \ddots
\end{bmatrix},
\end{align*}
where
\[
A=\begin{bmatrix}
\frac{1}{1-|z_1|^2} & \frac{1}{1-\overline{z_1}z_2} & \frac{1}{1-\overline{z_1}z_3}  & \cdots & \frac{1}{1-\overline{z_1}z_m} \\
\frac{1}{1-\overline{z_2}z_1}   & \frac{1}{1-|z_2|^2} & \frac{1}{1-\overline{z_2}z_3}  & \cdots & \frac{1}{1-\overline{z_2}z_m}  \\
\frac{1}{1-\overline{z_3}z_1}  & \frac{1}{1-\overline{z_3}z_2}   & \frac{1}{1-|z_3|^2}  & \cdots  & \frac{1}{1-\overline{z_3}z_m} \\
\vdots   & \vdots & \vdots & \ddots &\vdots \\
\frac{1}{1-\overline{z_m}z_1}   & \frac{1}{1-\overline{z_m}z_2} & \frac{1}{1-\overline{z_m}z_3} & \cdots &\frac{1}{1-|z_m|^2} \\
\end{bmatrix} \ \ \text{and} \ \ 
X^*_{\widetilde{\mathfrak{F}}}X_{\widetilde{\mathfrak{F}}}=\begin{bmatrix}
	A   & 0 & 0 & \cdots \\
	0   & A & 0 & \cdots \\
	0   & 0   & A  & \cdots \\
	\vdots   & \vdots  &\vdots & \ddots
\end{bmatrix},
\]
and for $k=0,1,2,\ldots$,
\[
D_k=\begin{bmatrix}
\frac{\beta_{km}}{\beta_{km}}   & 0 & 0  & \cdots &0 \\
0   & \frac{\beta_{km+1}}{\beta_{km}} & 0  &\cdots &0 \\
0   & 0   & \frac{\beta_{km+2}}{\beta_{km}}  & \cdots &0 \\
\vdots   & \vdots  &\vdots & \ddots &\vdots \\
0   & 0   &0 &\cdots & \frac{\beta_{km+m-1}}{\beta_{km}}
\end{bmatrix}.
\]
Then, $(D_{\beta}X^*_{\widetilde{\mathfrak{F}}}D^{-1}_{\beta}) (D_{\beta}X_{\widetilde{\mathfrak{F}}}D^{-1}_{\beta})$
is lower bounded on $H^2$. In addition to the boundedness of the first item
$D_{\beta}X^*_{\widetilde{\mathfrak{F}}}D^{-1}_{\beta}=(D^{-1}_{\beta}X_{\widetilde{\mathfrak{F}}}D_{\beta})^*$, we obtain that $D_{\beta}X_{\widetilde{\mathfrak{F}}}D^{-1}_{\beta}$ is lower bounded on $H^2$. This completes the proof.
\end{proof}

{\bf Proof of Theorem \ref{MB}.}
Without loss of generality, assume $B(z)=\prod^m_{j=1}\frac{z_{j}-z}{1-\overline{z_{j}}z}$.
Firstly, consider the case of that $B(z)$ has $m$ distinct zeros. Let
\[
\widetilde{\mathfrak{F}_{\beta}}=\bigcup\limits_{j=1}^{m}\frac{\mathfrak{F}_{\beta}}{1-\overline{z_{j}}z}=\{\frac{1}{1-\overline{z_{1}}z}\frac{B^n}{\beta_n}, \cdots, \frac{1}{1-\overline{z_{m}}z}\frac{B^n}{\beta_n}; n=0,1,2,\ldots\}.
\]
Then, by Lemma \ref{RBB}, $\widetilde{\mathfrak{F}_{\beta}}$ is a Riesz base of $H^2_{\beta}$.

For $j=1,2,\ldots,m$, denote by $\{e_{jn}=\frac{z^n}{\beta_n}\}_{n=0}^{\infty}$ the orthonormal base of the $j$-th space of $\bigoplus_{1}^{m}H^2_{\beta}$. Then, the sequence
\[
\mathfrak{E}_{\beta}=\{e_{jn}; j=1,2,\ldots,m \ \text{and} \ n=0,1,2,\ldots\}
\]
is an orthonormal base of $\bigoplus_{1}^{m}H^2_{\beta}$.

Define the linear operator $X: \bigoplus_{1}^{m}H^2_{\beta}\rightarrow H^2_{\beta}$ by
\[
X(e_{jn})=\frac{1}{1-\overline{z_j}z}\frac{B^n}{\beta_n}.
\]
Then $X$ maps the orthonormal base $\mathfrak{E}_{\beta}$ to the Riesz base $\widetilde{\mathfrak{F}_{\beta}}$, and consequently $X$ is a bounded invertible operator. Obviously,
\[
X (\bigoplus\limits_{1}^{m}M_z)= M_B X.
\]
Thus, $M_B\sim \bigoplus_{1}^{m}M_z$.

Now consider the general case. For any Blaschke product $B(z)$ with order $m$, there exists an analytic automorphism $\varphi_{z_0}(z)=\frac{z_0-z}{1-\overline{z_0}z}$ such that, $\varphi_{z_0}(B(z))$ is a Blaschke product with $m$ distinct zeros.

Following from $M_{\varphi_{z_0}(B)}\sim \bigoplus_{1}^{m}M_z$ and Theorem \ref{Mz}, we have
\[
M_B=\varphi_{z_0}(M_{\varphi_{z_0}(B)})\sim \varphi_{z_0}(\bigoplus\limits_{1}^{m}M_z)=\bigoplus\limits_{1}^{m}M_{\varphi_{z_0}}\sim\bigoplus\limits_{1}^{m}M_z.
\]
This completes the proof. \qed

\subsection{Examples and necessity of polynomial growth condition}

The weighted Hardy spaces of polynomial growth cover the weighted Bergman
spaces, the weighted Dirichlet spaces and many weighted Hardy spaces defined without measures.

\begin{example}\label{BergmanDS}
Let $dA$ denote the Lebesgue area measure on the unit open disk $\mathbb{D}$, normalized so that the measure of $\mathbb{D}$ equals $1$. For $\alpha\geq0$, the weighted Bergman space $A_{\alpha}^2$ is the space of analytic functions on $\mathbb{D}$ which are square-integrable with respect to the measure $dA_{\alpha}(z)=(\alpha+1)(1-|z|^2)^{\alpha}dA(z)$. As well known, it could be seemed as a weighted Hardy space $H^2_{\beta^{(\alpha)}}$, where $\beta^{(\alpha)}_0=1$ and for $k=1,2,\ldots$, $\beta_k^{(\alpha)}=\prod\limits_{j=1}^{k}w_{j}^{(\alpha)}$ with $w_{j}^{(\alpha)}=\sqrt{\frac{j+1}{j+2\alpha+1}}$. Since
\[
\lim\limits_{j\rightarrow\infty}(j+1)|w^{(\alpha)}_j-1|=\lim\limits_{j\rightarrow\infty}(j+1)\cdot\frac{1-\frac{j+1}{j+2\alpha+1}}{1+\sqrt{\frac{j+1}{j+2\alpha+1}}}=\alpha,
\]
every weighted Bergman space $A_{\alpha}^2$ satisfies the polynomial growth condition.

Similarly, the weighted Dirichlet space $D_{\lambda}^2$, for $\lambda\geq0$, could be seemed as a weighted Hardy space $H^2_{\beta^{(\lambda)}}$, where $\beta^{(\lambda)}_0=1$ and for $k=1,2,\ldots$, $\beta_k^{(\lambda)}=\prod_{j=1}^{k}w_{j}^{(\lambda)}$ with $w_{j}^{(\lambda)}=\sqrt{\frac{j+2\lambda+1}{j+1}}$; the Sobolev space $R(\mathbb{D})$ could be seemed as a weighted Hardy space $H^2_{\beta^{(S)}}$, where  $\beta_k^{(S)}=\prod_{j=1}^{k}w_{j}^{(S)}=\sqrt{\frac{k+1}{(3k^4-k^2+2k+1)\pi}}$ for $k=0,1,2,\ldots$. Both the weighted Dirichlet space and Sobolev space satisfy the polynomial growth condition. In particular, $H^2_{\frac{1}{\beta^{(\lambda)}}}$ is just the weighted Bergman space $A_{\lambda}^2$.
\end{example}

Beyond the weighted Hardy spaces defined by measures such as the weighted Bergman spaces, there are many weighted Hardy spaces of polynomial growth defined without measures.

\begin{example}
Let $H^2_{\beta}$ be the weighted Hardy space with
\[
\beta_0=1 \ \ \ \text{and} \ \ \  \beta_k=\frac{1}{\ln(k+1)}, \ \  \  \ \text{for} \ k=1,2,\ldots.
\]
Then $H^2_{\beta}$ satisfies the polynomial growth condition. This weighted Hardy space is not equivalent to the weighted Bergman
space, the weighted Dirichlet space or the Sobolev space.
\end{example}

In addition, the polynomial growth condition does not require the monotonicity of weight sequence.

\begin{example}
Given $\alpha\geq0$. Let $H^2_{\widetilde{\beta}}$ be the weighted Hardy space induced by the weight sequence $\widetilde{w}=\{\widetilde{w_k}\}_{k=1}^{\infty}$, where $\widetilde{w_k}$ is choosen to be the Bergman weight $w_{j}^{(\alpha)}$ or its reciprocal randomly, i.e.,
\[
\widetilde{w_k}=\left(\sqrt{\frac{j+1}{j+2\alpha+1}}\right)^{\epsilon_j}, \ \ \ \epsilon_j\in\{-1,1\}, \ \ \ \ \text{for} \ k=1,2,\ldots.
\]
Then $H^2_{\widetilde{\beta}}$ satisfies the polynomial growth condition, although $\widetilde{w}=\{\widetilde{w_k}\}_{k=1}^{\infty}$ is not a monotone sequence.
\end{example}

Now we will illustrate the necessity of the setting of polynomial growth condition. It is given a counterexample to show that $M_z$ could be not similar to $M_{\varphi}$, for some $\varphi\in \textrm{Aut}(\mathbb{D})$, on a weighted Hardy space of intermediate growth. In fact, we will show that the sequence $\{\frac{\varphi^n}{\beta_n}\}_{n=0}^{\infty}$ is not bounded in a weighted Hardy space of intermediate growth.

Let us begin from some estimations of the norms of some functions and composition operators on the classical Hardy space and the weighted Bergman spaces.

\begin{lemma}\label{dnlower}
Let $\varphi_t(z)=\frac{t-z}{1-tz}$, for any $t\in (0,1)$. Then the norm of $\alpha-$th power of the derivative of $\varphi_t(z)$ in the classical Hardy space $H^2$ tends to infinite as the positive number $\alpha$ tends to infinite. More precisely, for any $\alpha\geq \frac{1}{2}$,
\[
\|(\varphi_t')^{\alpha}(z)\|_{H^2} \geq \frac{(1+t)^{\alpha}}{(1-t)^{\alpha-1}\sqrt{2\pi(2\alpha-1)}}.
\]
\end{lemma}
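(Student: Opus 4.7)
The plan is to reduce the left-hand side to an explicit boundary integral and then estimate that integral by localization near its singularity at $\theta=0$. A direct differentiation gives $\varphi_t'(z) = -(1-t^2)/(1-tz)^2$; since $1-tz$ does not vanish on $\overline{\mathbb{D}}$ when $t \in (0,1)$, the principal branch is well-defined, and
\begin{equation*}
(\varphi_t')^{\alpha}(z) = e^{\mathbf{i}\pi\alpha}(1-t^2)^{\alpha}(1-tz)^{-2\alpha} \in H^{\infty} \subset H^2.
\end{equation*}
Evaluating the $H^2$ norm from boundary values,
\begin{equation*}
\|(\varphi_t')^{\alpha}\|_{H^2}^2 \;=\; \frac{(1-t^2)^{2\alpha}}{2\pi}\int_{-\pi}^{\pi}\frac{d\theta}{(1-2t\cos\theta+t^2)^{2\alpha}}.
\end{equation*}

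Next I would use the identity $1-2t\cos\theta+t^2 = (1-t)^2 + 4t\sin^2(\theta/2)$ together with the elementary bound $\sin(\theta/2)\leq\theta/2$ to obtain the pointwise lower bound $|1-te^{\mathbf{i}\theta}|^{-4\alpha} \geq ((1-t)^2 + t\theta^2)^{-2\alpha}$ on the integrand. Folding by evenness and applying the rescaling $\sqrt{t}\,\theta = (1-t)u$ reduces the integral to
\begin{equation*}
\int_{-\pi}^{\pi}\frac{d\theta}{(1-2t\cos\theta+t^2)^{2\alpha}} \;\geq\; \frac{2}{\sqrt{t}\,(1-t)^{4\alpha-1}}\int_{0}^{\pi\sqrt{t}/(1-t)}\frac{du}{(1+u^2)^{2\alpha}}.
\end{equation*}

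The remaining $u$-integral is the central estimate. For $\alpha>1/2$ one has $\int_0^{\infty}(1+u^2)^{-2\alpha}du = \frac{\sqrt{\pi}\,\Gamma(2\alpha-1/2)}{2\,\Gamma(2\alpha)}$, and by standard Gamma-function inequalities of Wendel/Gautschi type this is at least $\tfrac{1}{2}\sqrt{\pi/(2\alpha-1)}$. Inserting this together with the prefactor $(1-t^2)^{2\alpha}/(2\pi)$ and splitting $(1-t^2)^{2\alpha}=(1-t)^{2\alpha}(1+t)^{2\alpha}$, the factors of $(1-t)$ cancel to leave a lower bound proportional to $(1+t)^{2\alpha}/((1-t)^{2\alpha-2}\cdot 2\pi(2\alpha-1))$, from which the claimed square-root bound on $\|(\varphi_t')^{\alpha}\|_{H^2}$ follows by taking square roots. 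As a sanity check, the asymptotics as $t\to 1$ give $\|(\varphi_t')^{\alpha}\|_{H^2}^2 \sim C_{\alpha}(1-t)^{1-2\alpha}$, while the claimed bound is of order $(1-t)^{2-2\alpha}$, confirming that the inequality is consistent in the relevant regime.

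The main obstacle is producing the constant $\sqrt{2\pi(2\alpha-1)}$ in the denominator precisely, rather than a slightly different factor such as $\sqrt{2\pi\alpha}$ that naive Gaussian asymptotics for the tail integral would yield; this requires the Gamma-function inequality to be applied with the right endpoint. A secondary complication is the regime $t\to 0$, where the upper limit $\pi\sqrt{t}/(1-t)$ of the substituted integral shrinks to $0$ and the dominant-region estimate weakens. For small $t$, a separate estimate using $|1-te^{\mathbf{i}\theta}| \leq 1+t$ (hence $\|(\varphi_t')^{\alpha}\|_{H^2}^2 \geq ((1-t)/(1+t))^{2\alpha}$) must be invoked and reconciled with the main estimate, which is where the precise threshold on $\alpha$ in the hypothesis becomes essential.
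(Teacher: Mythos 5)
Your reduction to the boundary integral and the localization $1-2t\cos\theta+t^2=(1-t)^2+4t\sin^2(\theta/2)\le (1-t)^2+t\theta^2$, followed by the rescaling $\sqrt{t}\,\theta=(1-t)u$, is a legitimate strategy and genuinely different from the paper's. But the estimate you lean on to finish is false as stated: Gautschi's inequality gives $(x-1)^{1/2}<\Gamma(x)/\Gamma(x-\tfrac12)$, so with $x=2\alpha$,
$$
\int_0^\infty\frac{du}{(1+u^2)^{2\alpha}}=\frac{\sqrt{\pi}\,\Gamma(2\alpha-\tfrac12)}{2\,\Gamma(2\alpha)}<\frac12\sqrt{\frac{\pi}{2\alpha-1}},
$$
i.e.\ the Wendel/Gautschi bound runs in the \emph{opposite} direction from the one you assert (check $\alpha=1$: $\pi/4\approx 0.785<\tfrac12\sqrt{\pi}\approx 0.886$). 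Your truncated integral over $[0,\pi\sqrt{t}/(1-t)]$ is smaller still. Tracking your own prefactors, the inequality you would actually need is $\int_0^{\pi\sqrt{t}/(1-t)}(1+u^2)^{-2\alpha}\,du\ge \frac{\sqrt{t}(1-t)}{2(2\alpha-1)}$, which is neither proved nor obtainable from the Gamma identity alone; and the small-$t$ patch you propose, $\|(\varphi_t')^{\alpha}\|_{H^2}^2\ge((1-t)/(1+t))^{2\alpha}$, falls below the target bound for large $\alpha$ at any fixed $t>0$, so the two regimes are never reconciled. As written the argument does not establish the stated inequality.

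The paper sidesteps all of this with one elementary trick: on $[0,\pi]$ it inserts the factor $\sin\theta\le 1$ into the (nonnegative) integrand, after which the substitution $v=1+t^2-2t\cos\theta$ gives
$$
\int_0^{\pi}\frac{\sin\theta\,d\theta}{(1+t^2-2t\cos\theta)^{2\alpha}}=\frac{1}{2t}\int_{(1-t)^2}^{(1+t)^2}v^{-2\alpha}\,dv=\frac{1}{2t(2\alpha-1)}\Bigl((1-t)^{-2(2\alpha-1)}-(1+t)^{-2(2\alpha-1)}\Bigr),
$$
an exact evaluation in which the factor $2\alpha-1$ appears for free, with no special functions and no truncation. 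If you want to salvage your route, aim for a lower bound of the same shape with a (necessarily) slightly worse constant -- that is all that Lemma \ref{Blower} and Theorem \ref{img} actually use, since only the divergence of $\|(\varphi_t')^{\alpha}\|_{H^2}$ as $\alpha\to\infty$ matters downstream. (Be aware that the precise constant in the lemma is delicate anyway: the paper's own final minorization $(1-t)^{-2(2\alpha-1)}-(1+t)^{-2(2\alpha-1)}\ge t(1-t)^{-2(2\alpha-1)}$ requires $\alpha$ bounded away from $\tfrac12$, e.g.\ $\alpha\ge\tfrac34$, and for $\alpha$ near $\tfrac12$ the displayed inequality cannot hold since $\|(\varphi_t')^{1/2}\|_{H^2}=1$ while the right-hand side blows up.)
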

\begin{proof}
Notice that
\begin{align*}
\|(\varphi_t')^{\alpha}(z)\|^2_{H^2}
&=\frac{(1-t^2)^{2\alpha}}{2\pi}\int_{0}^{2\pi}\frac{1}{(1-t{\textrm e}^{{\textbf i}\theta})^{2\alpha}(1-t{\textrm e}^{-{\textbf i}\theta})^{2\alpha}} d\theta \\
&=\frac{(1-t^2)^{2\alpha}}{2\pi}\int_{0}^{2\pi}\frac{1}{(1+t^2-2t\cos\theta)^{2\alpha}} d\theta \\
&\geq\frac{(1-t^2)^{2\alpha}}{2\pi}\cdot 2\cdot \int_{0}^{\pi}\frac{\sin\theta}{(1+t^2-2t\cos\theta)^{2\alpha}} d\theta \\
&=\frac{(1-t^2)^{2\alpha}}{2\pi}\cdot2 \cdot\frac{1}{2t(2\alpha-1)}\cdot(\frac{1}{(1-t)^{2(2\alpha-1)}}-\frac{1}{(1+t)^{2(2\alpha-1)}}) \\
&\geq\frac{(1-t^2)^{2\alpha}}{2\pi}\cdot\frac{1}{t(2\alpha-1)}\cdot\frac{t}{(1-t)^{2(2\alpha-1)}} \\
&=\frac{(1+t)^{2\alpha}}{2\pi(2\alpha-1)(1-t)^{2\alpha-2}}.
\end{align*}
Then,
\[
\|(\varphi_t')^{\alpha}(z)\|_{H^2} \geq \frac{(1+t)^{\alpha}}{(1-t)^{\alpha-1}\sqrt{2\pi(2\alpha-1)}}\rightarrow\infty \ \ \ \text{as} \ \alpha\rightarrow\infty.
\]
\end{proof}

\begin{lemma}\label{Blower}
Let $\varphi_t(z)=\frac{t-z}{1-tz}$, for any $t\in (0,1)$. Then for any $\alpha\geq \frac{1}{2}$, if $n$ is large enough, we have
\[
\frac{\|\varphi^n_t(z)\|_{A_{\alpha}^2}}{\|z^n\|_{A_{\alpha}^2}}\geq \frac{1}{2}\|(\varphi_t')^{\alpha}(z)\|_{H^2}.
\]
\end{lemma}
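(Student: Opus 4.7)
The plan is to express $\|\varphi^n_t\|_{A_\alpha^2}^2$ as a Bergman integral, apply a change of variables, and then exploit boundary concentration as $n\to\infty$. Recall that by the definition of $A_\alpha^2$,
$$
\|\varphi_t^n\|_{A_\alpha^2}^2 = (\alpha+1)\int_{\mathbb{D}} |\varphi_t(z)|^{2n}(1-|z|^2)^\alpha\, dA(z).
$$
I would first substitute $z=\varphi_t(w)$, noting that $\varphi_t$ is a Möbius involution, so $dA(z)=|\varphi_t'(w)|^2\,dA(w)$, $\varphi_t(z)=w$, and the well-known identity $1-|\varphi_t(w)|^2=(1-|w|^2)|\varphi_t'(w)|$ gives $(1-|z|^2)^\alpha=(1-|w|^2)^\alpha|\varphi_t'(w)|^\alpha$. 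This transforms the integral into
$$
\|\varphi_t^n\|_{A_\alpha^2}^2 = (\alpha+1)\int_{\mathbb{D}} |w|^{2n}(1-|w|^2)^\alpha |\varphi_t'(w)|^{\alpha+2}\, dA(w).
$$

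Dividing by $\|z^n\|_{A_\alpha^2}^2=(\alpha+1)\int_{\mathbb{D}}|w|^{2n}(1-|w|^2)^\alpha dA(w)$, the ratio equals the expectation of the continuous function $|\varphi_t'(w)|^{\alpha+2}$ against the rotation-invariant probability measure $d\mu_n(w)=|w|^{2n}(1-|w|^2)^\alpha dA(w)/Z_n$. Since the radial density $r^{2n+1}(1-r^2)^\alpha$ peaks near $r^2=(2n+1)/(2n+1+2\alpha)$ and concentrates mass on $\partial\mathbb{D}$ as $n\to\infty$, dominated convergence together with continuity of $|\varphi_t'|$ on $\overline{\mathbb{D}}$ yields
$$
\lim_{n\to\infty}\frac{\|\varphi_t^n\|_{A_\alpha^2}^2}{\|z^n\|_{A_\alpha^2}^2} = \frac{1}{2\pi}\int_0^{2\pi}|\varphi_t'(e^{i\theta})|^{\alpha+2}\,d\theta.
$$

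The final step is to compare this boundary integral with $\|(\varphi_t')^\alpha\|_{H^2}^2=\frac{1}{2\pi}\int_0^{2\pi}|\varphi_t'(e^{i\theta})|^{2\alpha}d\theta$. To this end I would exploit the Möbius symmetry identity for the integrals $J_\gamma(t):=\int_0^{2\pi}|1-te^{i\theta}|^{2\gamma}d\theta$, specifically $J_{\gamma}(t)=(1-t^2)^{-(2\gamma+1)}J_{-\gamma-1}(t)$, together with $|\varphi_t'(e^{i\theta})|=(1-t^2)/|1-te^{i\theta}|^2$, to rewrite either side of the desired inequality in a common form. Once this comparison is established, the ratio $\|\varphi_t^n\|_{A_\alpha^2}/\|z^n\|_{A_\alpha^2}$ exceeds $\tfrac12\|(\varphi_t')^\alpha\|_{H^2}$ for $n$ sufficiently large (depending on $\alpha,t$), yielding the claim upon taking square roots. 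The main obstacle is precisely this last comparison: tracking constants through the boundary concentration, and showing that the integrand-exponent produced by the change of variables ($\alpha+2$) yields an integral that dominates a quarter of the $H^2$-norm integral (exponent $2\alpha$), uniformly in the regime of interest. This is where the hypothesis $\alpha\geq\tfrac12$ and the explicit form of $\varphi_t'$ enter in an essential way.
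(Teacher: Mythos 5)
Your first two steps are sound: the change of variables $z=\varphi_t(w)$ and the identity $1-|\varphi_t(w)|^2=(1-|w|^2)|\varphi_t'(w)|$ do give
$$
\|\varphi_t^n\|_{A_\alpha^2}^2=(\alpha+1)\int_{\mathbb{D}}|w|^{2n}(1-|w|^2)^{\alpha}|\varphi_t'(w)|^{\alpha+2}\,dA(w),
$$
and the rotation-invariance plus boundary-concentration argument correctly identifies the limit of the squared ratio as $\frac{1}{2\pi}\int_0^{2\pi}|\varphi_t'(e^{i\theta})|^{\alpha+2}\,d\theta=\|(\varphi_t')^{(\alpha+2)/2}\|_{H^2}^2$. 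The genuine gap is the final comparison, which you defer to a ``M\"obius symmetry identity'' for $J_\gamma$: you need $\frac{1}{2\pi}\int|\varphi_t'|^{\alpha+2}\geq\frac14\cdot\frac{1}{2\pi}\int|\varphi_t'|^{2\alpha}$, and this is \emph{false} for $\alpha>2$ once $t$ is close to $1$. Indeed, writing $\frac{1}{2\pi}\int|\varphi_t'|^{2\gamma}d\theta=(1-t^2)^{2\gamma}\,{}_2F_1(2\gamma,2\gamma;1;t^2)\asymp C_\gamma(1-t^2)^{1-2\gamma}$ as $t\to1^-$, your left-hand side grows like $(1-t^2)^{-\alpha}$ while the right-hand side grows like $(1-t^2)^{2-2\alpha}$, which is strictly faster when $\alpha>2$. (This is consistent with Lemma \ref{dnlower}, whose lower bound for $\|(\varphi_t')^{\alpha}\|_{H^2}$ blows up like $(1-t)^{-(\alpha-1)}$.) So no amount of constant-tracking closes this step: the quantity your argument produces is genuinely smaller than the stated target for large $\alpha$. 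The mismatch of exponents ($\alpha+2$ versus $2\alpha$) comes from a normalization issue: the unitary multiplier for the weight $(1-|z|^2)^{\alpha}$ is $(g')^{(\alpha+2)/2}$, whereas the lemma as stated tacitly uses the convention of \cite{KMis12} in which the representation acts by $f\mapsto(g')^{\alpha}(f\circ g)$; only under that normalization of $A_\alpha^2$ is the right-hand side of the lemma the correct boundary limit.

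The paper avoids integrals entirely. It invokes the unitarity of $D_\alpha^+$ to get $\|\varphi_t^n\|_{A_\alpha^2}=\|z^n(\varphi_t')^{\alpha}\|_{A_\alpha^2}$ in one line, expands $(\varphi_t')^{\alpha}=\sum_k c_kz^k$, and writes the squared ratio as $\sum_k|c_k|^2\bigl(\beta_{k+n}^{(\alpha)}/\beta_n^{(\alpha)}\bigr)^2$. It then picks $N$ so that $\sum_{k=0}^{N}|c_k|^2\geq\frac12\sum_{k=0}^{\infty}|c_k|^2$ and uses that $\beta_{k+n}^{(\alpha)}/\beta_n^{(\alpha)}\to1$ as $n\to\infty$ for each fixed $k\leq N$ (since $w_j^{(\alpha)}\to1$) to get the factor $\frac{1}{\sqrt2}\cdot\frac{1}{\sqrt2}=\frac12$. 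That coefficientwise truncation is where the constant $\frac12$ comes from, and it requires no comparison between two different $H^2$ norms. If you want to keep your integral formulation, you would have to restate the lemma with $\|(\varphi_t')^{(\alpha+2)/2}\|_{H^2}$ on the right (which still suffices for Theorem \ref{img}, since that norm also tends to infinity with $\alpha$), but as written your proposal does not prove the stated inequality.
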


\begin{proof}
Following from \cite{KMis12}, one can see the unitary representation $D_{\alpha}^+$ for the analytic automorphism group $\textrm{Aut}(\mathbb{D})$ on the weighted Bergman space $A_{\alpha}^2$,
\[
D_{\alpha}^+(g^{-1})(f)=(g')^{\alpha}\cdot(f\circ g), \ \ f\in A_{\alpha}^2, \ g\in \widetilde{\textrm{Aut}(\mathbb{D})},
\]
where $\widetilde{\textrm{Aut}(\mathbb{D})}$ is the universal covering group of $\textrm{Aut}(\mathbb{D})$.

Then, we have
\[
\|\varphi^n_t(z)\|_{A_{\alpha}^2}=\|(\varphi_t'(z))^{\alpha}\cdot(\varphi^n_t\circ \varphi_t(z))\|_{A_{\alpha}^2}=\|z^n(\varphi_t'(z))^{\alpha}\|_{A_{\alpha}^2}.
\]
Denote $(\varphi_t'(z))^{\alpha}=\sum_{k=0}^{\infty}c_kz^k$. There exists $N\in\mathbb{N}$ such that
\[
\sum\limits_{k=0}^{N}|c_k|^2\geq\frac{1}{2}\sum\limits_{k=0}^{\infty}|c_k|^2=\frac{1}{2}\|(\varphi_t')^{\alpha}(z)\|^2_{H^2}.
\]
Furthermore, if $n$ is large enough, we have
\[
\frac{\beta_{k+n}^{(\alpha)}}{\beta_{n}^{(\alpha)}}\geq \frac{\sqrt{2}}{2}, \ \ \ \text{for} \ k=1,2,\ldots,N.
\]
Then,
\[
\frac{\|\varphi^n_t(z)\|_{A_{\alpha}^2}}{\|z^n\|_{A_{\alpha}^2}}=\frac{\|z^n(\varphi_t'(z))^{\alpha}\|_{A_{\alpha}^2}}{\beta_n^{(\alpha)}} 
\geq \sqrt{\sum\limits_{k=0}^{N}|c_k|^2\left(\frac{\beta_{k+n}^{(\alpha)}}{\beta_{n}^{(\alpha)}}\right)^2} 
\geq \frac{1}{2}\|(\varphi_t')^{\alpha}\|_{H^2}.
\]
\end{proof}

Next, we could show that $C_{\varphi_t}$ is unbounded on a class of weighted Hardy spaces of intermediate growth.

{\bf Proof of Theorem \ref{img}.}
Since
\begin{align*}
\frac{\|\varphi^{n_j}_t(z)\|^2_{\beta^{-1}}}{\|z^{n_j}\|^2_{\beta^{-1}}}&=\sum\limits_{k=0}^{\infty}|\widehat{\varphi^{n_j}_t}(k)|^2\cdot \frac{\beta_{n_j}^2}{\beta_k^2}  \\
&\geq  \sum\limits_{k=0}^{n_j}|\widehat{\varphi^{n_j}_t}(k)|^2\cdot \left(\frac{\beta_k^{(\alpha_j)}}{\beta_{n_j}^{(\alpha_j)}}\right)^2  \\
&\geq  \sum\limits_{k=0}^{\infty}|\widehat{\varphi^{n_j}_t}(k)|^2\cdot \left(\frac{\beta_k^{(\alpha_j)}}{\beta_{n_j}^{(\alpha_j)}}\right)^2 - \sum\limits_{k=n_j+1}^{\infty}|\widehat{\varphi^{n_j}_t}(k)|^2 \\
&\geq \frac{\|\varphi^{n_j}_t(z)\|^2_{A_{\alpha_j}^2}}{\|z^{n_j}\|^2_{A_{\alpha_j}^2}}-1,
\end{align*}
it follows from Lemma \ref{dnlower} and Lemma \ref{Blower} that
\[
\frac{\|\varphi^{n_j}_t(z)\|^2_{\beta^{-1}}}{\|z^{n_j}\|^2_{\beta^{-1}}} \rightarrow\infty, \ \ \ \text{as} \ j\rightarrow\infty.
\]
Then the composition operator $C_{\varphi_t}: H^2_{\beta^{-1}}\rightarrow H^2_{\beta^{-1}}$ is unbounded, and so is $C_{\varphi_t}: H^2_{\beta}\rightarrow H^2_{\beta}$ by Theorem \ref{CorA}.

Notice that $\lim_{k\rightarrow\infty}(k+1)(w_k-1)=-\infty$ if and only if $\lim_{k\rightarrow\infty}(k+1)(\frac{1}{w_k}-1)=+\infty$. By Proposition \ref{CorA},
it suffices to consider the case of $\lim_{k\rightarrow\infty}(k+1)(w_k-1)=+\infty$. Let $\alpha_j=j$ for any $j\in \mathbb{N}$. Then for any $j\in \mathbb{N}$,
there exists a positive integer $m_j$ such that for every $k>m_j$, $w_k\geq \frac{1}{w_k^{(\alpha_j)}}$. Since
\[
\lim\limits_{k\rightarrow\infty} \beta_k\cdot \beta_k^{(\alpha_j)}=\infty,
\]
there exists a positive integer $n_j>m_j$, such that $\beta_{n_j}\geq \frac{\beta_{m_j}}{\beta_{n_j}^{(\alpha_j)}}$. Then, for every $0\leq k\leq m_j$, we have
\[
\frac{\beta_{n_j}}{\beta_k}\geq \frac{\beta_{m_j}}{\beta_k\beta_{n_j}^{(\alpha_j)}}\geq \frac{1}{\beta_{n_j}^{(\alpha_j)}}  \geq\frac{\beta_k^{(\alpha_j)}}{\beta_{n_j}^{(\alpha_j)}}.
\]
and for every $m_j< k\leq n_j$, we have
\[
\frac{\beta_{n_j}}{\beta_k}= \prod\limits_{i=k+1}^{n_j}w_{i}\geq \prod\limits_{i=k+1}^{n_j}\frac{1}{w_i^{(\alpha_j)}} =\frac{\beta_k^{(\alpha_j)}}{\beta_{n_j}^{(\alpha_j)}}.
\]
This completes the proof. \qed

\begin{example}\label{nln}
Let $\beta=\{\beta_k\}_{k=0}^{\infty}$, where $\beta_0=1$ and $\beta_k=(k+1)^{\ln (k+1)}$ for $k\geq 1$. Obviously, $w_k={\textrm e}^{\ln^2(k+1)-\ln^2 k}$ for all $k=1,2,\ldots$. It is not difficult to see that
$w_k={\textrm e}^{\ln^2(k+1)-\ln^2 k}\rightarrow 1$ as $k\rightarrow \infty$ and $\lim\limits_{k\rightarrow\infty}(k+1)(w_k-1)=+\infty$.
Then, by Theorem \ref{img}, the composition operator $C_{\varphi_t}: H^2_{\beta^{-1}}\rightarrow H^2_{\beta^{-1}}$ is unbounded, and so is $C_{\varphi_t}: H^2_{\beta}\rightarrow H^2_{\beta}$ by Proposition \ref{CorA}.
\end{example}

\begin{remark}
In the above example, $C_{\varphi_t}$ is unbounded on $H^2_{\beta^{-1}}$. However, it follows from C. C. Cowen's result (Theorem \ref{Controll} in the present article) that $C_{z\varphi_t}$ is bounded on $H^2_{\beta^{-1}}$. This means that the boundedness of $C_{z\varphi_t}$ does not imply the boundedness of $C_{\varphi_t}$.
\end{remark}

\section{Jordan decomposition and similarity classification of the representation of analytic functions on weighted Hardy spaces of polynomial growth}

In this section, we always assume that $H^2_{\beta}$ is the weighted Hardy space of polynomial growth induced by a weight sequence $w=\{w_k\}_{k=1}^{\infty}$.
We aim to study the Jordan decomposition of the representation of $\textrm{Hol}(\overline{\mathbb{D}})$ on the weighted Hardy space $H^2_{\beta}$. Strongly irreducible operator is a suitable substitute for Jordan block, and strongly irreducible decomposition is seemed as the Jordan decomposition of an infinite-dimensional matrix.

Let us recall some relevant basic concepts and notations.
Let $\mathcal{H}$ be a complex separable Hilbert space and $\mathcal{L}(\mathcal{H})$ be the set of all bounded linear operators from $\mathcal{H}$ to itself. For any $T\in \mathcal{L}(\mathcal{H})$, denote by $\{T\}'_{\mathcal{H}}$ the commutant of $T$ in $\mathcal{L}(\mathcal{H})$. Moreover, a nonzero idempotent $P$ in the commutant $\{T\}'_{\mathcal{H}}$ is said to be minimal if for each idempotent $Q\in \{T\}'_{\mathcal{H}}$, ${\textrm Ran}Q \subsetneqq {\textrm Ran}P$ implies $Q=0$. Here
${\textrm Ran} P$ denotes the range of the operator $P$. A bounded operator $T$ on a Hilbert space $\mathcal{H}$ is said to be strongly irreducible if there is no
nontrivial idempotent operator in its commutant. If $\mathfrak{P}=\{P_j; j=1,2,\ldots m\}$ be a family of minimal idempotents such that
\[
\sum\limits_{j=1}^{\textit{l}}P_j={\textbf I}, \ \ \ \text{and} \ \ \ P_iP_j=0 \ \  \text{if} \  i\neq j.
\]
We say that $\mathfrak{P}$ is a strongly irreducible decomposition of $T$.
For convenience, we use $\mathcal{H}^{(m)}$ to denote the direct sum of
$m$ copies of $\mathcal{H}$ and $T^{(m)}$ to denote the direct sum of $m$ copies of $T$ acting on $\mathcal{H}^{(m)}$.

Firstly, we need the following theorem. The
version of the following theorem on the Hardy space was obtained in \cite{Th77}. For more general
results on the Hardy space, see \cite{Th76} and \cite{C78}. The method in \cite{Th77} also works on the weighted
Bergman spaces by replacing the reproducing kernel on the Hardy space by one on the weighted
Bergman spaces $A_{\alpha}^2$ \cite{JZ}. Notice that
\[
k(z,\omega)=\sum\limits_{k=0}^{\infty}\frac{\overline{\omega}^kz^k}{\beta^2_k}
\]
is the reproducing kernel of the weighted Hardy space $H^2_{\beta}$. Similarly, we can give a proof of the following theorem as the same as the proof
in \cite{Th77} (pp. 524-528), except replacing the reproducing kernel on the classical Hardy space by one on the weighted Hardy space $H^2_{\beta}$.

\begin{theorem}\label{fhB}
For any $f\in \textrm{Hol}(\overline{\mathbb{D}})$, there exist a finite Blaschke
product $B$ and a function $h\in \textrm{Hol}(\overline{\mathbb{D}})$ such that
$f=h\circ B$ and $\{M_f \}_{H^2_{\beta}}' = \{M_B \}_{H^2_{\beta}}'$.
\end{theorem}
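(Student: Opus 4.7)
My plan is to transcribe Thomson's argument from \cite{Th77} to the weighted Hardy space $H^2_{\beta}$, using the reproducing kernel of $H^2_{\beta}$ in place of the Szeg\H o kernel. The first task is to verify that the formal series
$$
k_{\omega}(z) = \sum_{k=0}^{\infty} \frac{\overline{\omega}^{k} z^{k}}{\beta_{k}^{2}}
$$
really defines a reproducing kernel for $H^2_{\beta}$ at every $\omega \in \mathbb{D}$. Polynomial growth gives $1/\beta_{k}^{2}$ bounded by a polynomial in $k+1$, so the series converges for $|\omega|<1$, and the identities $\langle g, k_{\omega}\rangle_{H^2_{\beta}} = g(\omega)$ and $M_{g}^{*}k_{\omega}=\overline{g(\omega)}\,k_{\omega}$ (for $g \in H^{\infty}_{\beta}$) are then routine. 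This is the only place the space-specific structure enters; the rest of Thomson's argument depends only on these two formulas.

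Next, I would produce the factorization $f = h \circ B$ by taking $B$ to be a finite Blaschke product of maximum possible order among all such factorizations; the order is bounded above by the generic valence of $f$, so the maximum is attained, and $h \in {\rm Hol}(\overline{\mathbb{D}})$ follows because $B$ is a branched covering of $\overline{\mathbb{D}}$ by itself. The easy inclusion $\{M_{B}\}'_{H^2_{\beta}} \subseteq \{M_{f}\}'_{H^2_{\beta}}$ is then immediate: writing $h(w)=\sum_{k}a_{k}w^{k}$ (convergent on a neighborhood of $\sigma(M_{B})=\overline{\mathbb{D}}$), one has $M_{f} = h(M_{B})$ in the analytic functional calculus, and anything commuting with $M_{B}$ commutes with every polynomial (hence with $h(M_{B})$).

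The substantive step is the reverse inclusion. Given $T \in \{M_{f}\}'_{H^2_{\beta}}$, consider the antianalytic map $\omega \mapsto T^{*}k_{\omega}$. From $T^{*}M_{f}^{*}=M_{f}^{*}T^{*}$ one gets $M_{f}^{*}(T^{*}k_{\omega}) = \overline{f(\omega)}\,T^{*}k_{\omega}$, so $T^{*}k_{\omega}$ lies in the $\overline{f(\omega)}$-eigenspace of $M_{f}^{*}$. For $\mu$ in the regular set of $f$, this eigenspace is finite-dimensional and spanned by $\{k_{\omega'} : f(\omega')=\mu\}$ (linear independence of distinct reproducing kernels can be proved from the invertibility of a matrix analogous to the one in Lemma \ref{mmatrix}). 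Decomposing $f^{-1}(\mu)$ into $B$-fibers $\bigsqcup_{j}B^{-1}(\nu_{j})$ with $\{\nu_{j}\}=h^{-1}(\mu)$, one expands $T^{*}k_{\omega} = \sum_{\omega'} c_{\omega,\omega'} k_{\omega'}$ and must show $c_{\omega,\omega'}=0$ whenever $B(\omega)\ne B(\omega')$. Once this is shown, $T^{*}k_{\omega}$ is an $M_{B}^{*}$-eigenvector with eigenvalue $\overline{B(\omega)}$, and density of $\{k_{\omega}\}$ then upgrades $T^{*}M_{B}^{*}=M_{B}^{*}T^{*}$, i.e., $T \in \{M_{B}\}'_{H^2_{\beta}}$.

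The main obstacle is precisely the vanishing of the off-fiber coefficients $c_{\omega,\omega'}$: commuting with $M_{f}^{*}$ only forces $T^{*}$ to respect the coarser partition of $\mathbb{D}$ by $f$-fibers, and additional information is needed to pass to the finer partition by $B$-fibers. This is where the maximality of the order of $B$ is crucial: if some $c_{\omega,\omega'}\ne 0$ with $B(\omega)\ne B(\omega')$, then combining the antianalytic dependence of $T^{*}k_{\omega}$ on $\omega$ with the cluster behaviour of these kernels at branch points of $B$ (exactly the boundary/ramification analysis in Thomson \cite{Th77}, pp.~524--528) would produce a strictly finer Blaschke factorization of $f$, contradicting maximality. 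Since this local-to-global argument uses only reproducing kernel identities and the analytic structure of $f$, $B$, $h$ on $\overline{\mathbb{D}}$, it transfers verbatim to $H^2_{\beta}$.
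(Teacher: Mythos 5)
Your proposal is correct and follows essentially the same route as the paper: the paper's own ``proof'' consists precisely of the observation that Thomson's argument in \cite{Th77} (pp.~524--528) carries over verbatim once the Szeg\H{o} kernel is replaced by the reproducing kernel $k(z,\omega)=\sum_{k=0}^{\infty}\overline{\omega}^{k}z^{k}/\beta_{k}^{2}$ of $H^2_{\beta}$, which is exactly the substitution you identify as the only space-specific ingredient. Your sketch simply fills in more of Thomson's original argument than the paper chooses to reproduce.
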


Furthermore, we could obtain an analogue of Jordan decomposition for the representation of $\textrm{Hol}(\overline{\mathbb{D}})$ on a weighted Hardy space $H^2_{\beta}$ of polynomial growth. Notice that the following four facts hold.
\begin{enumerate}
  \item [(1)] $\{M_z \}_{H^2_{\beta}}' =\{M_f; f\in H^{\infty}_{\beta}\}$ and consequently $M_z$ is strongly irreducible.
  \item [(2)] For any $m\in\mathbb{N}$, $\{\bigoplus_{j=1}^m M_z \}_{\bigoplus_{j=1}^m H^2_{\beta}}' =\{M_F; F\in {\textrm M_m}(H^{\infty}_{\beta})\}$.
  \item [(3)] Let $P_j$ be the projection from $\bigoplus_{j=1}^m H^2_{\beta}$ onto the $j$-th component. Then $\mathfrak{P}=\{P_j; j=1,2,\ldots m\}$ is a strongly irreducible decomposition of $\bigoplus_{j=1}^m M_z$.
  \item [(4)] Let $B$ be a finite Blaschke product with order $m$. Then $M_B\sim\bigoplus_{j=1}^m M_z$ by theorem \ref{MB}, i.e., there exists a bounded invertible operator
      \[
      X:\bigoplus\limits_{j=1}^m H^2_{\beta}\rightarrow H^2_{\beta}
      \]
      such that
      \[
      X(\bigoplus\limits_{j=1}^m M_z) X^{-1}=M_B.
      \]
      Then $\{XP_jX^{-1}; j=1,2,\ldots m\}$ is also a strongly irreducible decomposition of $M_B$, where $\mathfrak{P}=\{P_j; j=1,2,\ldots m\}$ is defined in above $(3)$.
\end{enumerate}
Then the method of Jiang and Zheng, to prove the following result on weighted
Bergman spaces $A_{\alpha}^2$ (Lemma 3.3 in \cite{JZ}), also works on the weighted space $H^2_{\beta}$.

\begin{theorem}\label{JDT}
Given any $f\in \textrm{Hol}(\overline{\mathbb{D}})$. Suppose that $f=h\circ B$, where $h\in \textrm{Hol}(\overline{\mathbb{D}})$ and $B$ is a finite Blaschke
product with order $m$ such that $\{M_f \}_{H^2_{\beta}}' = \{M_B \}_{H^2_{\beta}}'$. Then
\[
M_f\sim\bigoplus\limits_1^m M_h,
\]
$\{M_h \}_{H^2_{\beta}}' = \{M_z \}_{H^2_{\beta}}'$ and $M_h$ is strongly irreducible.
\end{theorem}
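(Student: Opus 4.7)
The plan is to combine Theorem \ref{MB} with the analytic functional calculus to produce the decomposition, and then to use the standing hypothesis $\{M_f\}'_{H^2_{\beta}} = \{M_B\}'_{H^2_{\beta}}$ to transport the commutant of $M_z$ through the similarity.

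First, invoke Theorem \ref{MB}: since $B$ is a finite Blaschke product of order $m$, there is a bounded invertible operator
$$
X:\bigoplus\limits_{1}^{m}H^2_{\beta}\longrightarrow H^2_{\beta}
$$
with $X\bigl(\bigoplus_1^m M_z\bigr)X^{-1}=M_B$. Because $h\in{\rm Hol}(\overline{\mathbb{D}})$, the paper's earlier discussion of the analytic functional calculus gives $h(M_B)=M_{h\circ B}=M_f$ and $h(M_z)=M_h$. Since functional calculus respects similarity and direct sums,
$$
M_f=h(M_B)=h\bigl(X(\oplus_1^m M_z)X^{-1}\bigr)=X\bigl(\oplus_1^m h(M_z)\bigr)X^{-1}=X\bigl(\oplus_1^m M_h\bigr)X^{-1},
$$
which proves $M_f\sim\bigoplus_1^m M_h$.

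Next I would use the similarity $X$ to translate the hypothesis $\{M_f\}'_{H^2_{\beta}}=\{M_B\}'_{H^2_{\beta}}$ into an equality of commutants on $\bigoplus_1^m H^2_{\beta}$:
$$
\bigl\{\oplus_1^m M_h\bigr\}'=X^{-1}\{M_f\}'X=X^{-1}\{M_B\}'X=\bigl\{\oplus_1^m M_z\bigr\}'.
$$
A standard observation about direct sums is that the commutant of $\oplus_1^m T$ consists exactly of block matrices $(T_{ij})_{i,j=1}^{m}$ with every entry in $\{T\}'_{H^2_{\beta}}$. Applied to both sides and tested against block matrices that are zero off a single entry, the displayed equality forces
$$
\{M_h\}'_{H^2_{\beta}}=\{M_z\}'_{H^2_{\beta}}.
$$

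Finally, strong irreducibility of $M_h$ is immediate from fact (1) recalled just before the statement: $\{M_z\}'_{H^2_{\beta}}=\{M_g:g\in H^{\infty}_{\beta}\}$, and an idempotent $M_g$ in this algebra satisfies $g^2=g$, hence $g\in\{0,1\}$, so the only idempotents in $\{M_h\}'_{H^2_{\beta}}$ are $0$ and ${\bf I}$.

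The only place where there is anything to worry about is the reduction of the matrix-commutant equality to the scalar equality $\{M_h\}'=\{M_z\}'$; once that is cleanly stated, every other step is either Theorem \ref{MB}, the analytic functional calculus on ${\rm Hol}(\overline{\mathbb{D}})$, or fact (1). I would therefore spend most of the writing on justifying the block-matrix description of $\{\oplus_1^m T\}'$ and the testing argument, and treat the similarity and functional-calculus steps as one-line computations.
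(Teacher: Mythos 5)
Your proposal is correct and is essentially the paper's intended argument: the paper only sketches the proof by listing facts (1)--(4) and citing the method of Jiang and Zheng (Lemma 3.3 in \cite{JZ}), and that method is exactly what you carry out --- conjugate $M_f=h(M_B)$ by the invertible $X$ from Theorem \ref{MB} to get $\bigoplus_1^m M_h$, transport the hypothesis $\{M_f\}'=\{M_B\}'$ to $\{\bigoplus_1^m M_h\}'=\{\bigoplus_1^m M_z\}'$, and read off $\{M_h\}'=\{M_z\}'=H^{\infty}_{\beta}$ from the block-matrix description of the commutant of a direct sum. The two points you flag as needing care (that $\{\bigoplus_1^m T\}'$ is exactly ${\rm M}_m(\{T\}')$, and the corner-testing step) are both elementary and correct as stated.
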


Now we have obtained the Jordan decomposition of the representation of $\textrm{Hol}(\overline{\mathbb{D}})$ on $H^2_{\beta}$. Since there is no standard form of strongly irreducible operators in the sense of similarity, a natural question is how to characterize the similarity of Jordan representation. First, let us consider the "Jordan block" $M_h$ with $h\in \textrm{Hol}(\overline{\mathbb{D}})$ and $\{M_{h} \}_{H^2_{\beta}}' = \{M_z \}_{H^2_{\beta}}'$.

\begin{lemma}\label{sisim}
Let $h_1, h_2\in \textrm{Hol}(\overline{\mathbb{D}})$ with $\{M_{h_1} \}_{H^2_{\beta}}' = \{M_{h_2} \}_{H^2_{\beta}}' = \{M_z \}_{H^2_{\beta}}'$. Then
\[
M_{h_1}\sim M_{h_2}
\]
if and only if there exists a M\"{o}bius transformation $\varphi$ such that
\[
h_2=h_1\circ \varphi.
\]
\end{lemma}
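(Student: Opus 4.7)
The plan is to prove the two directions separately. For the $(\Leftarrow)$ direction, the strategy is to exhibit a composition operator that implements the similarity: given $h_2 = h_1\circ\varphi$ with $\varphi\in{\rm Aut}(\mathbb{D})$, Theorem~\ref{Mz} (or Theorem~\ref{CompBHol}) provides that $C_\varphi\colon H^2_\beta\to H^2_\beta$ is a bounded invertible operator, and the routine identity
$$
C_\varphi M_{h_1}(f) = (h_1 f)\circ\varphi = (h_1\circ\varphi)(f\circ\varphi) = M_{h_2}C_\varphi(f)
$$
gives $M_{h_1} = C_\varphi^{-1}M_{h_2}C_\varphi$, so $M_{h_1}\sim M_{h_2}$.

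For the $(\Rightarrow)$ direction, suppose $X M_{h_1} X^{-1} = M_{h_2}$ for some bounded invertible $X$ on $H^2_\beta$. I plan to exploit the reproducing kernel $k_w(z) = \sum_{k=0}^\infty \overline{w}^k z^k/\beta_k^2$ of $H^2_\beta$ together with the basic eigenvalue identity $M_h^* k_w = \overline{h(w)}k_w$, which holds for every $h\in H^\infty_\beta$ and $w\in\mathbb{D}$. Taking adjoints of the similarity yields $M_{h_1}^* X^* = X^* M_{h_2}^*$, so for each $w\in\mathbb{D}$ the nonzero vector $X^* k_w$ is an eigenvector of $M_{h_1}^*$ with eigenvalue $\overline{h_2(w)}$. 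Combined with the analogous statement obtained from $X^{-1}$ and the identification of the point spectrum of $M_h^*$ with $\overline{h(\mathbb{D})}$, this forces $h_1(\mathbb{D}) = h_2(\mathbb{D})$.

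The main obstacle is that producing the M\"{o}bius map $\varphi$ requires knowing that the commutant hypothesis $\{M_{h_i}\}' = \{M_z\}'$ forces $h_i$ to be univalent on $\mathbb{D}$. My approach combines Theorem~\ref{fhB} (Thomson factorization) with Theorem~\ref{MB}: writing $h_i = g_i\circ B_i$ with $\{M_{h_i}\}' = \{M_{B_i}\}'$, the similarity $M_{B_i}\sim\bigoplus_1^{\deg B_i} M_z$ makes $\{M_{B_i}\}'$ isomorphic to the matrix algebra $M_{\deg B_i}(H^\infty_\beta)$, which is non-commutative when $\deg B_i\geq 2$ --- ruling out that case by comparison with the abelian algebra $\{M_z\}' = H^\infty_\beta$. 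An eigenspace-dimension argument then excludes any remaining multi-sheeted behaviour of $h_i$: two distinct preimages $h_i(w_1) = h_i(w_2)$ would produce linearly independent kernels $k_{w_1}, k_{w_2}$ inside a common eigenspace of $M_{h_i}^*$, and the resulting room for non-multiplication operators in $\{M_{h_i}\}'$ would strictly enlarge $\{M_z\}'$.

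With univalence in hand, $h_i^{-1}\colon h_i(\mathbb{D})\to\mathbb{D}$ is analytic, and I define $\varphi := h_1^{-1}\circ h_2\colon\mathbb{D}\to\mathbb{D}$, so that $h_2 = h_1\circ\varphi$ by construction. The map $\varphi$ is analytic, injective (by univalence of $h_2$), and surjective (by univalence of $h_1$ together with $h_1(\mathbb{D}) = h_2(\mathbb{D})$); hence it is an analytic bijection of the disk, i.e., a M\"{o}bius transformation in ${\rm Aut}(\mathbb{D})$.
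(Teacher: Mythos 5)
Your $(\Leftarrow)$ direction is correct and is exactly the paper's argument: $C_\varphi$ is bounded and invertible by Theorem \ref{Mz} and intertwines $M_{h_1}$ with $M_{h_2}$. The $(\Rightarrow)$ direction, however, rests on a false intermediate claim: the hypothesis $\{M_{h}\}_{H^2_\beta}'=\{M_z\}_{H^2_\beta}'$ does \emph{not} force $h$ to be univalent on $\mathbb{D}$. Take $h(z)=(z-\tfrac12)^2$: it is not univalent ($h(0.4)=h(0.6)$), yet it cannot be written as $g\circ B$ with $B$ a Blaschke product of order $m\geq 2$, because then every value in $h(\mathbb{D})$ would have a number of preimages in $\mathbb{D}$ (counted with multiplicity) divisible by $m$, whereas $h^{-1}(1.96)\cap\mathbb{D}=\{-0.9\}$ is a single simple preimage. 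By Theorem \ref{fhB} its Thomson factorization is therefore trivial and $\{M_h\}'=\{M_z\}'$. This also pinpoints why your eigenspace-dimension argument fails: an isolated coincidence $h(w_1)=h(w_2)$ does produce a two-dimensional eigenspace of $M_h^*$, but it does \emph{not} produce a bounded non-multiplication operator in $\{M_h\}'$ --- only a globally coherent identification (i.e.\ a genuine factorization through a finite Blaschke product) does, which is precisely the content of Thomson's theorem. Consequently $h_1^{-1}$ need not exist and the map $\varphi:=h_1^{-1}\circ h_2$ cannot be defined; the forward direction is not proved. (Your algebra-isomorphism observation that $\deg B_i=1$ is correct but only rules out the Blaschke factor, not non-univalence of $h_i$ itself.)

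The paper's route avoids univalence entirely and is worth internalizing: from $XM_{h_1}X^{-1}=M_{h_2}$ one gets $X\{M_{h_1}\}'X^{-1}=\{M_{h_2}\}'$, and since both commutants equal $\{M_z\}'=\{M_f:\ f\in H^\infty_\beta\}$, the operator $XM_zX^{-1}$ must itself be a multiplication operator $M_g$ with $g\in H^\infty_\beta$. Evaluating on polynomials shows $X$ acts as (a multiple of) the composition operator $C_g$, and the same reasoning applied to $X^{-1}$ yields $\psi$ with $g\circ\psi=\psi\circ g=\mathrm{Id}_{\mathbb{D}}$, so $g\in\mathrm{Aut}(\mathbb{D})$; the intertwining relation $C_gM_{h_1}=M_{h_2}C_g$ then reads $h_2=h_1\circ g$ directly. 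In short, the M\"obius map should be extracted from the conjugation of the commutant algebras, not from inverting $h_1$.
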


\begin{proof}
If there is a M\"{o}bius transformation $\varphi$ such that $h_2=h_1\circ \varphi$, then
\[
C_{\varphi} M_{h_1}= M_{h_2} C_{\varphi},
\]
where the composition operator $C_{\varphi}$ is a bounded invertible operator on $H^2_{\beta}$ by Theorem \ref{Mz}.

Suppose that $X$ is a bounded invertible operator on $H^2_{\beta}$ such that
\[
X M_{h_1}= M_{h_2} X.
\]
Then
\[
X \{M_{h_1} \}_{H^2_{\beta}}'  X^{-1}= \{M_{h_2} \}_{H^2_{\beta}}'.
\]
Since $\{M_{h_1} \}_{H^2_{\beta}}' = \{M_{h_2} \}_{H^2_{\beta}}' = \{M_z \}_{H^2_{\beta}}'=H^{\infty}_{\beta}$,
there exists a function $g\in H^{\infty}_{\beta}$ such that
\[
X M_{z} X^{-1}= M_{g}.
\]
This implies that $X$ is just the composition operator $C_{g}$. Similarly, one can obtain that $X^{-1}$ is also a composition operator $C_{\psi}$, where $\psi$ is a function in $H^{\infty}_{\beta}$. Notice that
\[
g(\mathbb{D})\subseteq\mathbb{D}, \ \psi(\mathbb{D})\subseteq\mathbb{D}, \ \ \text{and} \ \ g\circ\psi=\psi\circ g={\textrm Id}_{\mathbb{D}}.
\]
Then $g$ is an analytic automorphism on $\mathbb{D}$. Furthermore, it follows from
\[
C_{g} M_{h_1}= M_{h_2} C_{g}
\]
that
\[
h_2=h_1\circ g.
\]
The proof is finished.
\end{proof}

\begin{remark}\label{CMz}
As a special case of above lemma, the converse of Theorem \ref{Mz} is also true. Let $\psi\in \textrm{Hol}({\mathbb{D}})$. If $M_z\sim M_{\psi}$ on $H^2_{\beta}$, then $\psi$ must be a  M\"{o}bius transformation. In another word,
if $C_{\psi}$ is a bounded invertible operator on $H^2_{\beta}$, then $\psi$ is a  M\"{o}bius transformation.
\end{remark}

To study the similarity of the operator representation, K-theory of Banach algebra is a powerful technique, for instance, a similarity classification of Cowen-Douglas operators was given by using the ordered K-group of the commutant algebra as an invariant \cite{J04} and \cite{JGJ}.
Let $\mathcal{B}$ be a Banach algebra and $Proj(\mathcal{B})$ be the set of all idempotents in $\mathcal{B}$. The algebraic equivalence "$\sim_{a}$" is introduced in $Proj(\mathcal{B})$. Let $e$ and $\widetilde{e}$ be two elements in $Proj(\mathcal{B})$. We say that $e\sim_{a} \widetilde{e}$ if there are two elements $x,y \in\mathcal{B}$ such that
\[
xy=e \ \ \ \text{and} \ \ \ yx=\widetilde{e}.
\]
Let ${\textbf Proj}(\mathcal{B})$ denote the algebraic equivalence classes of $Proj(\mathcal{B})$ under algebraic equivalence "$\sim_{a}$". Let
\[
{\textrm M}_{\infty}(\mathcal{B}) = \bigcup\limits_{n=1}^{\infty} {\textrm M}_{n}(\mathcal{B}),
\]
where ${\textrm M}_{n}(\mathcal{B})$ is the algebra of $n\times n$ matrices with entries in $\mathcal{B}$. Set
\[
\bigvee(\mathcal{B})={\textbf Proj}({\textrm M}_{\infty}(\mathcal{B})).
\]
Then $\bigvee({\textrm M}_{n}(\mathcal{B}))$ is isomorphic to $\bigvee(\mathcal{B})$.
The direct sum of two matrices gives a natural addition in ${\textrm M}_{\infty}(\mathcal{B})$ and hence induces an addition "$+$" in ${\textbf Proj}({\textrm M}_{\infty}(\mathcal{B}))$ by
\[
[p] + [q] = [p  \oplus q],
\]
where $[p]$ denotes the equivalence class of the idempotent $p$. Furthermore, $(\bigvee(\mathcal{B}), +)$ forms a semigroup and depends on $\mathcal{B}$
only up to stable isomorphism, and then $K_0(\mathcal{B})$ is the Grothendieck group of $\bigvee(\mathcal{B})$.

\begin{theorem}[\cite{CFJ}, see also in \cite{JW}]\label{CFJ-K}
Let $T$ be a bounded operator on a Hilbert space $\mathcal{H}$. The following are equivalent:
\begin{enumerate}
\item [(1)] $T$ is similar to $\sum_{i=1}^k \oplus A_i^{(n_i)}$ under the space decomposition $\mathcal{H}=\sum_{i=1}^k \oplus \mathcal{H}_i^{(n_i)}$,
where $k$ and $n_i$ are finite, $A_i$ is strongly irreducible and $A_i$ is not similar to $A_j$ if $i\neq j$.
Moreover, $T^{(n)}$ has a unique strongly irreducible decomposition up to similarity.
\item [(2)] The semigroup $\bigvee (\{T\}')$ is isomorphic to the semigroup $\mathbb{N}^{(k)}$, where $\mathbb{N}$ is the set of all natural numbers $\{0, 1, 2,\ldots\}$ and the isomorphism $\phi$ sends
\[
[I]\rightarrow n_1e_1 + n_2e_2 +\cdots+n_ke_k,
\]
where $\{e_i\}_{i=1}^k$ are the generators of $\mathbb{N}^{(k)}$ and $n_i\neq 0$.
\end{enumerate}
\end{theorem}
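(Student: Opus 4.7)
The plan is to establish the two implications separately; uniqueness is then read off from the fact that $\bigvee(\{T\}')$ is a similarity invariant.

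For (a)$\Rightarrow$(b), I would replace $T$ by $\bigoplus_{i=1}^{k} A_i^{(n_i)}$ (similar operators have isomorphic commutants, hence isomorphic $\bigvee$-semigroups) and then analyze the commutant blockwise. The commutant of $\bigoplus A_i^{(n_i)}$ is the block algebra whose $(i,j)$-entry is $M_{n_i\times n_j}(\mathcal{I}(A_j,A_i))$, where $\mathcal{I}(\cdot,\cdot)$ denotes intertwiners. The central lemma to prove is: if $A$ and $B$ are strongly irreducible and not similar, then every $X\in\mathcal{I}(B,A)$ together with any $Y\in\mathcal{I}(A,B)$ satisfies that $XY$ is a quasinilpotent perturbation of $0$ in the commutant of $A$ — more precisely, $XY\in\{A\}'$ cannot be a nontrivial idempotent, else its range would split $A$. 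This shows the off-diagonal blocks contribute no idempotent classes to $\bigvee$ and reduces the computation to the diagonal algebra $\bigoplus_{i=1}^{k} M_{n_i}(\{A_i\}')$.

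Once this reduction is in place, I would invoke stable invariance of $\bigvee$ to get $\bigvee(M_{n_i}(\{A_i\}'))\cong\bigvee(\{A_i\}')$. Strong irreducibility of $A_i$ means $\{A_i\}'$ has no nontrivial idempotents, so $\bigvee(\{A_i\}')=\mathbb{N}\cdot[I_{A_i}]\cong\mathbb{N}$. Summing over $i$ gives $\bigvee(\{T\}')\cong\mathbb{N}^{(k)}$, with $[I_{\mathcal{H}_i^{(n_i)}}]\mapsto n_i e_i$, and so $[I_{\mathcal{H}}]\mapsto\sum_i n_i e_i$.

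For (b)$\Rightarrow$(a), I would lift each generator $e_i$ to an idempotent class $[p_i]\in\bigvee(\{T\}')$, represented by an idempotent in some $M_{N}(\{T\}')$. The relation $[I]=\sum_{i=1}^k n_i[p_i]$ in $\bigvee(\{T\}')$ translates, via algebraic equivalence in $M_\infty(\{T\}')$ and the standard argument passing from stable isomorphism of idempotents to similarity of their ranges, into a decomposition $T\sim\bigoplus_{i=1}^{k} A_i^{(n_i)}$ where $A_i$ is the compression of $T$ to the range of a representative of $[p_i]$. Strong irreducibility of each $A_i$ follows because $e_i$ is a generator (minimal) in $\mathbb{N}^{(k)}$: any nontrivial idempotent in $\{A_i\}'$ would split $[p_i]$, contradicting indecomposability of $e_i$. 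Pairwise non-similarity of $A_i,A_j$ for $i\neq j$ follows because otherwise $[p_i]=[p_j]$ in $\bigvee$, forcing $e_i=e_j$. Finally, uniqueness of the strongly irreducible decomposition of $T^{(n)}$ up to similarity is automatic: any such decomposition determines the same ordered data $(k,\{n_i\},\{A_i\})$ through the invariant $\bigvee(\{T^{(n)}\}')\cong\mathbb{N}^{(k)}$ with $[I]\mapsto n\sum n_i e_i$.

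The main obstacle will be the intertwining lemma and the associated cancellation needed in the $(b)\Rightarrow(a)$ direction. Namely, one must show both that cross-intertwiners $\mathcal{I}(A_j,A_i)$ between non-similar strongly irreducibles contribute no new classes to $\bigvee$, and that $\bigvee(\{T\}')$ is cancellative enough that a stable-isomorphism decomposition descends to an actual similarity decomposition of $T$ itself rather than only of some $T^{(N)}$. Both points rest on spectral/Fredholm properties of strongly irreducible operators; the cited proof in \cite{CFJ} handles these by arguing within a suitable ideal structure of $\{T\}'$, and the same argument goes through verbatim in our setting since nothing in it uses the ambient Hilbert space beyond separability.
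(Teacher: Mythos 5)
The paper offers no proof of this theorem; it is quoted from \cite{CFJ} (see also \cite{JW}), so there is no internal argument to compare yours against, and I can only judge the sketch on its own terms. It contains one step that is genuinely wrong, and it sits at the heart of the matter. In the direction (a)$\Rightarrow$(b) you assert that strong irreducibility of $A_i$ gives $\bigvee(\{A_i\}')=\mathbb{N}\cdot[I_{A_i}]\cong\mathbb{N}$ ``because $\{A_i\}'$ has no nontrivial idempotents.'' But $\bigvee(\{A_i\}')$ is by definition ${\bf Proj}({\rm M}_{\infty}(\{A_i\}'))$, and ${\rm M}_{n}(\{A_i\}')$ for $n\geq 2$ contains plenty of nontrivial idempotents no matter what $\{A_i\}'$ looks like; the issue is whether every such idempotent is algebraically equivalent to a multiple of $[I]$, which is exactly the question of whether $A_i^{(n)}$ has a unique strongly irreducible decomposition. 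That is precisely the content of the ``Moreover'' clause in (a), which your argument for (a)$\Rightarrow$(b) never invokes. If strong irreducibility alone forced $\bigvee(\{A\}')\cong\mathbb{N}$, that clause would be redundant and the equivalence would be nearly vacuous; a correct proof of this direction must feed the uniqueness hypothesis into the identification of the idempotent classes.

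The remaining ingredients are in better shape but still only gestured at. The cross-intertwiner observation is sound as far as it goes (if $XY=I$ for intertwiners between non-similar strongly irreducibles $A$ and $B$, then $YX$ is a nonzero idempotent in $\{B\}'$, hence the identity, forcing $A\sim B$), but upgrading it to the claim that off-diagonal blocks contribute nothing to $\bigvee$ of the full block algebra, and the cancellation needed in (b)$\Rightarrow$(a) to descend from a stable equivalence to a similarity of $T$ itself rather than of some $T^{(N)}$, are the actual technical core of \cite{CFJ}; you correctly flag them as obstacles but then defer them wholesale to the reference. Finally, a small but telling slip: at the end you call the uniqueness of the strongly irreducible decomposition of $T^{(n)}$ ``automatic.'' It is a consequence of (b), not of the first sentence of (a), and keeping that asymmetry straight is what prevents the proof of (a)$\Rightarrow$(b) from being circular.
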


\begin{corollary}[\cite{CFJ}, see also in \cite{JW}]\label{n1}
Let $T_1$ and $T_2$ be two strongly irreducible operators on a Hilbert space $\mathcal{H}$, and $T\sim T_1^{(n_1)}\oplus T_2^{(n_2)}$.
If $\bigvee (\{T\}')$ is isomorphic to the semigroup $\mathbb{N}$, then $T_1$ is similar to $T_2$.
\end{corollary}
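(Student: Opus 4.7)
The plan is to derive this corollary directly from Theorem \ref{CFJ-K} by a contrapositive/contradiction argument, using that the rank of the free commutative semigroup $\mathbb{N}^{(k)}$ is a genuine invariant.

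First, I would argue by contradiction: suppose $T_1 \not\sim T_2$. Then the decomposition $T \sim T_1^{(n_1)} \oplus T_2^{(n_2)}$ already has the form required in item $(a)$ of Theorem \ref{CFJ-K} with $k=2$, $A_1 = T_1$, $A_2 = T_2$, since $T_1, T_2$ are both strongly irreducible and are inequivalent under similarity. Invoking the equivalence $(a)\Leftrightarrow(b)$ of Theorem \ref{CFJ-K}, this forces
\[
\bigvee(\{T\}') \;\cong\; \mathbb{N}^{(2)}.
\]
On the other hand, the hypothesis of the corollary gives $\bigvee(\{T\}') \cong \mathbb{N} = \mathbb{N}^{(1)}$, so it suffices to show $\mathbb{N}^{(1)} \not\cong \mathbb{N}^{(2)}$ as abelian semigroups. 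This is the only remaining technical point and is standard: any semigroup isomorphism sends the (unique, up to reordering) set of indecomposable generators to indecomposable generators, and $\mathbb{N}$ has exactly one such generator while $\mathbb{N}^{(2)}$ has two. (Equivalently, after passing to Grothendieck groups one gets $\mathbb{Z} \not\cong \mathbb{Z}^2$ by rank, which is classical.)

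In the other direction, if $T_1 \sim T_2$, then $T \sim T_1^{(n_1+n_2)}$ is a strongly irreducible decomposition with a single similarity class, so $k=1$ and $\bigvee(\{T\}') \cong \mathbb{N}$, consistent with the hypothesis. Combining these two cases, the hypothesis $\bigvee(\{T\}')\cong\mathbb{N}$ rules out $T_1 \not\sim T_2$ and leaves only $T_1\sim T_2$, which is the desired conclusion.

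The main (and only) obstacle is a careful invocation of the hard direction $(a)\Rightarrow(b)$ of Theorem \ref{CFJ-K}: one must verify that the hypothesis of that theorem is met, i.e., that the strongly irreducible summands $T_1,T_2$ are pairwise non-similar, which is precisely the contradiction hypothesis. Everything else is bookkeeping about the free commutative semigroup $\mathbb{N}^{(k)}$. In particular, no new operator-theoretic argument is required beyond what Theorem \ref{CFJ-K} already provides; the corollary is an immediate specialization to the smallest nontrivial rank.
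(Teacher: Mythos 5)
The paper gives no proof of this corollary---it is quoted from \cite{CFJ} and \cite{JW}---so your proposal has to be judged as a derivation from Theorem \ref{CFJ-K}, which is indeed the intended source. As written, it has a genuine gap. You assume $T_1\not\sim T_2$ and claim that $T\sim T_1^{(n_1)}\oplus T_2^{(n_2)}$ ``already has the form required in item $(a)$'' with $k=2$, and then invoke $(a)\Rightarrow(b)$ to get $\bigvee(\{T\}')\cong\mathbb{N}^{(2)}$. But item $(a)$ is a conjunction: besides the existence of a decomposition into pairwise non-similar strongly irreducible summands, it requires that \emph{$T^{(n)}$ has a unique strongly irreducible decomposition up to similarity}. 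That uniqueness clause is the substantive content of the theorem and is not automatic---strongly irreducible decompositions do not in general satisfy a Krull--Schmidt property, and there exist operators admitting a finite decomposition into non-similar strongly irreducible summands whose $\bigvee(\{T\}')$ is \emph{not} $\mathbb{N}^{(k)}$. So you have not established $(a)$, and the conclusion $\bigvee(\{T\}')\cong\mathbb{N}^{(2)}$ does not follow. (Your closing remark that $\mathbb{N}\not\cong\mathbb{N}^{(2)}$ is correct but is not where the difficulty lies.)

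The argument should run through the theorem in the opposite direction. The hypothesis $\bigvee(\{T\}')\cong\mathbb{N}$ is exactly statement $(b)$ with $k=1$ (the image of $[I]$ is some $n_1\in\mathbb{N}$, and $n_1\neq 0$ since $[I]\neq[0]$: $xy=I$ and $yx=0$ would give $I=(xy)(xy)=x(yx)y=0$). By $(b)\Rightarrow(a)$, $T\sim A^{(n_1)}$ for a single strongly irreducible $A$, \emph{and} the strongly irreducible decomposition of $T$ is unique up to similarity. Since $T_1^{(n_1)}\oplus T_2^{(n_2)}$ is another strongly irreducible decomposition of $T$, uniqueness forces $T_1\sim A\sim T_2$. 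This uses the uniqueness clause as a conclusion supplied by the theorem rather than as a hypothesis you would otherwise have to verify.
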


To study the similarity of the Jordan representation, we would also use some fundamental properties of Fredholm operators (refer to \cite{Dou}). A bounded operator $T$ on a Hilbert space $\mathcal{H}$ is said to be Fredholm, if ${\textrm dim} {\textrm Ker} T$ and ${\textrm dim} {\textrm Ker} T^*$
are finite. Moreover, the Fredholm index of $T$ is defined by
\[
{\textrm ind} T={\textrm dim} {\textrm Ker} T-{\textrm dim} {\textrm Ker} T^*.
\]
Notice that Fredholm index is a similar invariant and for any $n\in\mathbb{N}$, $T^{(n)}$ is also Fredholm with ${\textrm ind} T^{(n)}=n\cdot{\textrm ind} T$.

\begin{theorem}\label{nh}
Let $h_1, h_2\in \textrm{Hol}(\overline{\mathbb{D}})$ with $\{M_{h_1} \}_{H^2_{\beta}}' = \{M_{h_2} \}_{H^2_{\beta}}' = \{M_z \}_{H^2_{\beta}}'$, and let $m_1$ and $m_2$ be two positive integers. Then
\[
\bigoplus\limits_1^{m_1} M_{h_1}\sim\bigoplus\limits_1^{m_2} M_{h_2}
\]
if and only if $m_1=m_2$ and $M_{h_1}\sim M_{h_2}$, i.e., there exists a M\"{o}bius transformation $\varphi\in \textrm{Aut}(\mathbb{D})$ such that $h_2=h_1\circ \varphi$.
\end{theorem}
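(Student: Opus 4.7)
The \emph{if} direction is immediate: if $m_1=m_2$ and $h_2 = h_1\circ\varphi$ for some $\varphi\in{\rm Aut}(\mathbb{D})$, then Theorem \ref{Mz} provides a bounded invertible composition operator $C_\varphi$ on $H^2_\beta$ with $C_\varphi M_{h_1} = M_{h_2} C_\varphi$, and this intertwining lifts to the $m_1$-fold direct sum. The substance is the converse direction, which I plan to handle via the uniqueness of strongly irreducible decomposition supplied by Theorem \ref{CFJ-K}.

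The first step is to verify that $M_{h_1}$ and $M_{h_2}$ are themselves strongly irreducible on $H^2_\beta$. Any $X \in \{M_z\}'$ is determined by $X(1)$ via $X(z^n)=z^n X(1)$, so $\{M_z\}' = \{M_g : g\in H^\infty_\beta\}$; an idempotent $M_g$ then satisfies $g^2=g$, forcing $g\in\{0,1\}$ since $g$ is analytic on $\mathbb{D}$. Hence $\{M_{h_j}\}' = \{M_z\}'$ contains no nontrivial idempotent, and both $M_{h_1}$ and $M_{h_2}$ are strongly irreducible.

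Set $T = \bigoplus_1^{m_1} M_{h_1} \sim \bigoplus_1^{m_2} M_{h_2}$. This gives two presentations of $T$ as a direct sum of copies of a single strongly irreducible operator. I will then apply Theorem \ref{CFJ-K}: such a homogeneous strongly irreducible decomposition is unique up to similarity, and on the K-theoretic side $\bigvee(\{T\}')\cong \mathbb{N}$, with the class $[\mathbf{I}]$ equal to both $m_1$ (via the first presentation, since $\{T\}'\cong M_{m_1}(H^\infty_\beta)$) and $m_2$ (via the second). Matching the two presentations forces the single strongly irreducible type and multiplicity to agree, so $M_{h_1}\sim M_{h_2}$ and $m_1=m_2$. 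Finally, Lemma \ref{sisim} converts $M_{h_1}\sim M_{h_2}$ into the existence of $\varphi\in{\rm Aut}(\mathbb{D})$ with $h_2=h_1\circ\varphi$.

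The main obstacle is the rigorous invocation of uniqueness in Theorem \ref{CFJ-K}. A safer backup route, if the direct appeal feels too slick, is to consider $T\oplus T \sim M_{h_1}^{(m_1)}\oplus M_{h_2}^{(m_2)}$: if one had $M_{h_1}\not\sim M_{h_2}$, the right-hand side would be a genuine two-type strongly irreducible decomposition, giving $\bigvee(\{T\oplus T\}')\cong\mathbb{N}^{(2)}$ by Theorem \ref{CFJ-K}(b), while the homogeneous presentation $T\oplus T\sim M_{h_1}^{(2m_1)}$ gives $\bigvee\cong\mathbb{N}$, a contradiction; hence $M_{h_1}\sim M_{h_2}$, and the equality $m_1=m_2$ then follows either from the K-theoretic image of $[\mathbf{I}]$ or from the Fredholm index of $T-\lambda$ at any $\lambda\in\mathbb{D}\setminus h_1(\partial\mathbb{D})$ where $M_{h_1}-\lambda$ is Fredholm, since ${\rm ind}(T-\lambda) = m_j\cdot{\rm ind}(M_{h_j}-\lambda)$ under each presentation.
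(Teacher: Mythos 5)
Your proposal is correct and follows essentially the same route as the paper: both rest on the Cao--Fang--Jiang $K$-theory machinery, computing $\bigvee(\{\cdot\}')\cong\mathbb{N}$ from the homogeneous presentation to force the two strongly irreducible types to coincide, then extracting $m_1=m_2$ (your ``backup'' via $T\oplus T\sim M_{h_1}^{(m_1)}\oplus M_{h_2}^{(m_2)}$ and Corollary \ref{n1}, together with the Fredholm-index count, is verbatim the paper's argument). The only cosmetic difference is that your primary route appeals directly to the uniqueness clause of Theorem \ref{CFJ-K}(a) and reads off $m_1=m_2$ from the image of $[\mathbf{I}]$, whereas the paper routes through Corollary \ref{n1} and the index; both are sound.
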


\begin{proof}
Suppose that
\[
\bigoplus\limits_1^{m_1} M_{h_1}\sim\bigoplus\limits_1^{m_2} M_{h_2}.
\]
Let $T=M_{h_1}^{(m_1)}\oplus M_{h_2}^{(m_2)}$. Then
\[
T\sim M_{h_1}^{(2m_1)}.
\]
Since,
\[
\{M_{h_1}^{(2m_1)} \}_{H^2_{\beta}}' =\{M_F; F\in {\textrm M_{2m_1}}(H^{\infty}_{\beta})\},
\]
it follows from Lemma 2.9 in \cite{CFJ} or Theorem 6.11 in \cite{JW} that
\[
\bigvee(\{T \}_{H^2_{\beta}}')\cong \bigvee(\{M_{h_1}^{(2m_1)} \}_{H^2_{\beta}}') \cong \bigvee({\textrm M_{2m_1}}(H^{\infty}_{\beta}))\cong \bigvee (H^{\infty}_{\beta}) \cong\mathbb{N}.
\]
In addition, $\{M_{h_1} \}_{H^2_{\beta}}' = \{M_{h_2} \}_{H^2_{\beta}}' = \{M_z \}_{H^2_{\beta}}'=H^{\infty}_{\beta}$ implies $M_{h_1}$ and $M_{h_2}$ are strongly irreducible.
Thus, by Corollary \ref{n1}, we have $M_{h_1}\sim M_{h_2}$.

On the other hand, there exists $\lambda\in\mathbb{C}$ such that $M_{h_1-\lambda}$ is Fredholm with nonzero index. Following from $M_{h_1}\sim M_{h_2}$, $M_{h_2-\lambda}$ is also Fredholm with the same index of $M_{h_1-\lambda}$.
Since $M_{h_1-\lambda}^{(m_1)}\sim M_{h_2-\lambda}^{(m_2)}$, we have
\[
m_1\cdot {\textrm ind}M_{h_1-\lambda}={\textrm ind}M_{h_1-\lambda}^{(m_1)}={\textrm ind}M_{h_2-\lambda}^{(m_2)}=m_2\cdot {\textrm ind}M_{h_2-\lambda}.
\]
Then $m_1=m_2$.

The converse is obvious. The proof is finished.
\end{proof}

{\bf Proof of Theorem \ref{JPT}.}
Following from Theorem \ref{JDT} (the existence) and Theorem \ref{nh} (the uniqueness in the sense of analytic autoumorphism group actiion), we obtain the Jordan representation theorem of $\textrm{Hol}(\overline{\mathbb{D}})$ on a weighted Hardy space $H^2_{\beta}$ of polynomial growth immediately. \qed

As a corollary, we may characterize the similarity classification of the representation of $\textrm{Hol}(\overline{\mathbb{D}})$ on a weighted Hardy space $H^2_{\beta}$ of polynomial growth, which generalizes the main result of Jiang and Zheng in \cite{JZ}.

{\bf Proof of Theorem \ref{Simlar}.}
Suppose that $M_{f_1}$ is similar to $M_{f_2}$ on $H^2_{\beta}$. By Theorem \ref{fhB}, we may write
\[
f_1=h\circ B_1, \ \ \ \ f_2=h_2\circ \widetilde{B_2},
\]
where $h, h_2 \in \textrm{Hol}(\overline{\mathbb{D}})$ such that $M_{h}$ and $M_{h_2}$ are strongly irreducible, and $B_1$ and
$\widetilde{B_2}$ are two finite Blaschke products with order $m$ and $m_2$, respectively. Following from Theorem \ref{JDT} and Theorem \ref{nh}, we have
$m=m_2$ and $M_{h}\sim M_{h_2}$, i.e., there exists a $\varphi\in \textrm{Aut}(\mathbb{D})$ such that $h_2=h\circ \varphi$. Let $B_2=\varphi\circ \widetilde{B_2}$. Then $B_2$ is also a Blaschke product with order $m$ and
\[
f_2=h_2\circ \widetilde{B_2}=h\circ \varphi\circ \widetilde{B_2}= h\circ B_2.
\]

The converse is a straightforward corollary of Theorem \ref{JDT}. The proof is finished.  \qed

\noindent {\Large \textbf{Declarations}}

\noindent \textbf{Ethics approval}

\noindent Not applicable.

\noindent \textbf{Competing interests}

\noindent The author declares that there is no conflict of interest or competing interest.

\noindent \textbf{Acknowledgement}

\noindent The authors thank Professor Lixin Cheng for many valuable discussions on this paper.

\noindent \textbf{Funding}

\noindent The second author was supported by National Natural Science Foundation of China (Grant No. 11831006, 11920101001 and 11771117).

\noindent \textbf{Availability of data and materials}

\noindent Data sharing is not applicable to this article as no datasets were generated or analyzed during the current study.


\begin{thebibliography}{000}
	
	\bibitem{B48} K. I. Babenko, On conjugate functions, Dokl. Akad. Nauk SSSR, 62(2) (1948): 157-160.
	
	\bibitem{B51} N. K. Bari, Biorthogonal systems and bases in Hilbert space, Moskov. Gos. Univ. U\v{c}enye Zapiski Matematika 148(4) (1951): 69-107. (in Russian)
	
	\bibitem{CFJ} Y. Cao, J. Fang, C. Jiang, K-groups of Banach algebras and strongly irreducible decompositions of operators,
	J. Operator Theory 48 (2002): 235-253.
	
	\bibitem{Car} G. Cassier, I. Chalendar, The group of the invariants of a finite Blaschke product, Complex Var. Theory Appl. 42(3) (2000): 193-206.
	
	\bibitem{CM} D. N. Clark, G. Misra, On homogeneous contractions and unitary representations of $SU(1,1)$, J. Operator Theory 30 (1993), 109-122.
	
	\bibitem{C78} C. C. Cowen, The commutant of an analytic Toeplitz operator, T. Am. Math. Soc. 239 (1978): 1-31.
	
	\bibitem{C90} C. C. Cowen, An application of Hadamard multiplication to operators on weighted Hardy spaces, Linear Alg. Appl. 133 (1990): 21-32.
	
	\bibitem{C95} C. C. Cowen, B. D. MacCluer, Composition Operators on Spaces of Analytic Functions, Studies in Advanced Mathematics, CRC Press, Boca Raton, FL, 1995.
	
	\bibitem{C98} C. C. Cowen, B. D. MacCluer, Some problems on composition operators, Studies on composition operators (Laramie, WY, 1996), 17–25, Contemp. Math., 213, Amer. Math. Soc., Providence, RI, 1998.
	
	\bibitem{CD} M. J. Cowen, R. G. Douglas, Complex geometry and operator theory, Acta Math-Djursholm 141(1) (1978): 187-261.
	
	\bibitem{Dou} R. Douglas, Banach Algebra Techniques in Operator Theory, Academic Press, New York, London, 1972.
	
	\bibitem{D07} R. Douglas, Operator Theory and complex geometry, Extracta Mathematicae, 24(2) (2007): 135-165.
	
	\bibitem{Fri} E. Fricain, J. Mashreghi, On a characterization of finite Blaschke products, Complex Var. Elliptic Equ. 59(3) (2014): 362-368.
	
	\bibitem{GP22} E. Gallardo-Guti\'{e}rrez, J. Partington, Multiplication by a finite Blaschke product on weighted Bergman spaces: Commutant and reducing subspaces, J. Math. Anal. Appl. 515(1) (2022), Paper No. 126383, 12 pages.
		
	\bibitem{G81} M. Gromov, Groups of polynomial growth and expanding maps, Publ. Math-Paris 53(1) (1981): 53-78.
	
	\bibitem{J04} C. Jiang, Similarity classification of Cowen-Douglas operators, Canad. J. Math. 56(4) (2004): 742-775.
	
	\bibitem{JGJ} C. Jiang, X. Guo, K. Ji, K-group and similarity classification of operators, J. Funct. Anal. 225 (2005): 167-192.
	
	\bibitem{JL} C. Jiang,  Y. Li, The commutant and similarity invariant of analytic Toeplitz operators on Bergman space,
	Sci. China Ser. A: Mathematics, 50(5) (2007): 651-664.
	
	\bibitem{JW} C. Jiang, Z. Wang, Structure of Hilbert Space Operators, World Scientific Publishing, Hackensack, NJ, 2006.
	
	\bibitem{JZ} C. Jiang,  D. Zheng, Similarity of analytic Toeplitz operators on the Bergman spaces, J. Funct. Anal. 258(9) (2010): 2961-2982.
	
	\bibitem{Ji} K. Ji, R. Shi, Similarity of multiplication operators on the Sobolev disk algebra, Acta Math. Sin. 29(4) (2013): 789-800.
	
	\bibitem{KMis12} A. Kor\'{a}nyi,  G. Misra, Homogeneous operators on Hilbert spaces of holomorphic functions, J. Funct. Anal. 254(9) (2012): 2419-2436.
	
	\bibitem{Loh} C. L\"{o}h,  Geometric Group Theory: An Introduction, Universitext, Springer, Cham, 2017.
	
	\bibitem{Nik} N. K. Nikol'ski\v{i}, Treatise on the shift operator. Springer Berlin Heidelberg, 1986.
	
	\bibitem{Ole} A. M. Olevskii, On operators generating conditional bases in a Hilbert space,  Math. Notes 12(1) (1972): 476-482.
	
	\bibitem{Sing} I. Singer, Bases in Banach Space I, Springer-Verlag, New York, 1970.
	
	\bibitem{Shi} A. L. Shields, Weighted shift operators and analyic function theory, "Topics in Operator Theory", Mathematical Surveys, Vol. 13, American Mathematical Society, Providence, RI, (1974): 49-128.
	
	\bibitem{Th76} J. Thomson, The commutant of a class of analytic Toeplitz operators II, Indiana Univ. Math. J. 25 (1976): 793-800.
	
	\bibitem{Th77} J. Thomson, The commutant of a class of analytic Toeplitz operators, Amer. J. Math. 99 (1977): 522-529.
	
\end{thebibliography}
\end{document}